\setlist[enumerate]{labelsep=*, leftmargin=1.5pc}
\setlist[enumerate]{label=\normalfont(\roman*), ref=\roman*}
\theoremstyle{plain}
\newtheorem{thm}{Theorem}[section]
\newtheorem{pro}[thm]{Proposition}
\newtheorem{lem}[thm]{Lemma}
\newtheorem{conjecture}[thm]{Conjecture}
\theoremstyle{definition}
\newtheorem{dfn}[thm]{Definition}
\newtheorem{rem}[thm]{Remark}
\newtheorem{eg}[thm]{Example}
\newtheorem{cons}[thm]{Construction}
\DeclareMathOperator{\Pic}{Pic}
\DeclareMathOperator{\Gr}{Gr}
\DeclareMathOperator{\Crit}{Crit}
\DeclareMathOperator{\Ann}{Ann}
\DeclareMathOperator{\Vol}{Vol}
\DeclareMathOperator{\Edges}{Edges}
\DeclareMathOperator{\Faces}{Faces}
\DeclareMathOperator{\sgn}{sgn}
\DeclareMathOperator{\coker}{coker}
\DeclareMathOperator{\join}{Join}
\DeclareMathOperator{\Bl}{Bl}
\DeclareMathOperator{\Aut}{Aut}
\newcommand{\cD}{\mathcal{D}}
\newcommand{\cF}{\mathcal{F}}
\newcommand{\cG}{\mathcal{G}}
\newcommand{\cN}{\mathcal{N}}
\newcommand{\cO}{\mathcal{O}}
\newcommand{\CC}{\mathbb{C}}
\newcommand{\kk}{\Bbbk}
\newcommand{\PP}{{\mathbb{P}}}
\newcommand{\QQ}{{\mathbb{Q}}}
\newcommand{\RR}{{\mathbb{R}}}
\newcommand{\ZZ}{{\mathbb{Z}}}
\renewcommand{\P}{\mathscr{P}}
\newcommand{\Cstar}{\CC^\times}
\newcommand{\V}[1]{\operatorname{Verts}\mleft({#1}\mright)}
\newcommand{\LS}{\mathcal{LS}^+_{\text{pre},X}}
\newcommand{\KN}{\text{KN}}
\begin{document}
\author[T.\,Prince]{Thomas Prince}
\address{Mathematical Institute\\University of Oxford\\Woodstock Road\\Oxford\\OX2 6GG\\UK}
\email{thomas.prince@magd.ox.ac.uk}

\keywords{Calabi--Yau manifolds, toric degenerations.}
\subjclass[2000]{14J32  (Primary), 14J33, 14M25, 14J81 (Secondary)}
\title[Calabi--Yau $3$-folds via the Gross--Siebert algorithm]{Smoothing Calabi--Yau toric hypersurfaces using the Gross--Siebert algorithm}
\maketitle
\begin{abstract}
	We explain how to form a novel dataset of simply connected Calabi--Yau threefolds via the Gross--Siebert algorithm. We expect these to degenerate to Calabi--Yau toric hypersurfaces with certain Gorenstein (not necessarily isolated) singularities. In particular, we explain how to `smooth the boundary' of a class of $4$-dimensional reflexive polytopes to obtain a polarised tropical manifolds. We compute topological invariants of a compactified torus fibration over each such tropical manifold, expected to be homotopy equivalent to the general fibre of the Gross--Siebert smoothing. We consider a family of examples related to the joins of elliptic curves. Among these we find $14$ topological types with $b_2=1$ which do not appear in existing lists of known rank one Calabi--Yau threefolds.
\end{abstract}

\section{Introduction}
\label{sec:introduction}

Calabi--Yau threefolds, three dimensional compact K\"{a}hler manifolds $X$ such that $h^1(X,\cO_X)$ and $h^2(X,\cO_X)$ vanish, are intensively studied objects in both algebraic geometry and theoretical physics. Datasets of such objects have been studied since the work of Candelas--Lutken--Schimmrigk \cite{CLS88} and  Candelas--Lynker--Schimmrigk~\cite{CLS90}. In \cite{B94} Batyrev described a construction of a Calabi--Yau threefold from any triangulation of a four dimensional reflexive polytope; extended by Batyrev--Borisov~\cite{BB96} to \emph{nef partitions} of higher dimensional reflexive polytopes. Together with the classification of $4$-dimensional reflexive polytopes by Kreuzer--Skarke \cite{KS00}, this construction provides an enormous number of Calabi--Yau threefolds. By way of illustration there are $473,800,776$ $4$-dimensional reflexive polytopes, without taking into account the number of triangulations.

Despite the plethora of Calabi--Yau threefolds obtained by the methods above, such lists do not necessarily imply an abundance of Calabi--Yau threefolds in a particular class. For example, \cite{K15} contains a -- then complete -- list of $151$ known constructions of Calabi--Yau threefolds of Picard rank one. In fact $20$ of these constructions are conjectural, and we explore the question of the existence of several such examples in \S\ref{sec:examples}, in light of the recent work of Inoue~\cite{I19} and Knapp--Sharpe~\cite[\S$2.5$]{KS}. In a different direction, a list of constructions of Calabi--Yau threefolds with small Hodge numbers was compiled by Candelas--Constantin--Mishra~\cite{CCM18}. In this article we describe an algorithm to construct a large class of new examples with small Picard rank, and generate examples. We construct (assuming Conjecture~\ref{conj:KN_spaces}) $14$ new topological types of simply connected Calabi--Yau threefolds with $b_2=1$. One of these topological types is predicted by the existence of an Calabi--Yau differential operator with integral monodromy~\cite{ES03} and appears as one of the $20$ conjectural examples listed in \cite{K15}.

The constructions we present in this article are based on the \emph{Gross--Siebert program}. This is an algebro-geometric approach to the Strominger--Yau--Zaslow (SYZ) conjecture, developed by Gross and Siebert in \cite{GS06,GS10,Gross--Siebert}. The main objects of study in this program are \emph{toric log Calabi--Yau spaces}; unions of toric varieties equipped with sections of line bundles which, by the results of \cite{GS06}, determine a log structure. These spaces determine \emph{integral affine manifolds with singularities}, which play a key role in the work of Gross, Haase--Zharkov, and Ruan, on topological versions of the SYZ conjecture; we refer to \cite{G01,G05,G01I,R07,HZ05} for further details.

Before stating our main result we recall that, given a $4$-dimensional reflexive polytope $P$, there is a canonical bijection $F \mapsto F^\star$ between $d$-dimensional faces of $P$ and $(3-d)$-dimensional faces of $P^\circ$, the polar polytope to $P$. We write $\ell(E)$ for the lattice length of a $1$-dimensional lattice polytope $E$.

\begin{thm}
	\label{thm:main_cons}
	Let $P$ be a $4$-dimensional reflexive polytope, and $D$ be a choice of Minkowski decomposition of each of the polygons in
	\[
	\left\{\ell(F^\star)F : F \in \Faces(P,2)\right\}
	\]
	into polygons affine linearly isomorphic to standard simplices (of dimensions $1$ and $2$). The pair $(P,D)$ determines a locally rigid, positive, toric log Calabi--Yau space $X_0(P,D)$. Moreover, the toric log Calabi--Yau space $X_0(P,D)$ admits a polarisation if $(P,D)$ is \emph{regular}; see Definition~\ref{dfn:regular}.
\end{thm}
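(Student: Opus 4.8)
The plan is to realise $X_0(P,D)$ through the dictionary of \cite{GS06}, which identifies positive toric log Calabi--Yau spaces (up to a choice of gluing data) with integral affine manifolds with singularities $(B,\P)$ carrying a compatible toric polyhedral decomposition. Accordingly I would first build the pair $(B,\P)$ from $(P,D)$ and verify that it is a \emph{positive, locally rigid} integral affine manifold with singularities; the space $X_0(P,D)$ is then the toric log Calabi--Yau space associated to $(B,\P)$ together with a (say, trivial) choice of gluing data, and the qualifiers in the statement transport directly from $(B,\P)$ to the space.

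First I would take $B=\partial P$, a topological $3$-sphere, and equip the interior of each facet (each element of $\Faces(P,3)$) with the integral affine structure inherited from the ambient lattice, glued across the $2$-faces by the fan structure of $P$. The role of $D$ is to prescribe the discriminant $\Delta$ and the local monodromy. On each $2$-face $F$ the Minkowski decomposition of $\ell(F^\star)F$ into standard simplices is dual to an overlay of elementary tropical lines, whose union is a balanced trivalent tropical curve $\Delta_F\subset F$; I would set $\Delta=\bigcup_F \Delta_F$, checking that the ends of the $\Delta_F$ match across the edges of $P$ so that $\Delta$ is a well-defined trivalent graph in the $2$-skeleton of $B$, and that the charts so produced make $B\setminus\Delta$ a genuine integral affine manifold (trivial monodromy around loops not linking $\Delta$, and spherical links at the vertices, so that $B$ is a manifold). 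The cells dual to $D$ refine the face structure of $P$ and give $\P$.

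With $(B,\P)$ in hand, the heart of the argument is the local analysis along $\Delta$. I would stratify $\Delta$ into its edges (generic points) and its trivalent vertices and, at a representative of each stratum, compute the local monodromy of the affine structure. The standard-simplex hypothesis is exactly what forces every transverse model to be elementary: a unit-segment summand produces a focus--focus line and a unit-triangle summand a trivalent vertex, while the scaling by $\ell(F^\star)$ normalises the lattice so that the total monodromy prescribed around $F$ is distributed into single elementary shears along $\Delta_F$. Verifying that these are precisely the simple, \emph{positive} local models of \cite{GS06} — so that the transverse monodromy is positive and the local sections of $\LS$ attain the expected minimal rank, which is the content of local rigidity — is where the real work lies. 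I expect this to be the principal obstacle, since it requires controlling how the tropical curves $\Delta_F$ interact along the edges of $P$, where two decomposed polygons meet and the local affine chart is assembled from two sides.

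Finally, for the polarisation I would produce a multivalued piecewise-linear function $\varphi$ on $(B,\P)$ that is strictly convex across every wall of the refined decomposition. The reflexivity of $P$ supplies a natural convex starting point — the function cut out by $P^\circ$, whose bending across the $2$-faces of $P$ is governed by the lattice distances $\ell(F^\star)$ — and I would perturb it compatibly with the cells introduced by $D$. Strict convexity across the new walls cannot hold for arbitrary $(P,D)$; the hypothesis that $(P,D)$ be \emph{regular} (Definition~\ref{dfn:regular}) is precisely the linear-programming condition guaranteeing that such a $\varphi$ exists. Under regularity the resulting $\varphi$ is the desired polarisation and exhibits $X_0(P,D)$ as polarised.
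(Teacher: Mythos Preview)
Your high-level plan—build an integral affine manifold with simple singularities from $(P,D)$ and then invoke \cite{GS06,Gross--Siebert}—is the paper's plan too, but you are working on the wrong base. The paper takes $B$ on $\partial P^\circ$, not $\partial P$. Moreover, the polyhedral decomposition $\P$ is not a refinement of the face structure by ``cells dual to $D$''; rather, the paper first constructs a PL function $\varphi^r=\varphi^c+\epsilon\psi$ on $\partial P^\circ$ (Construction~\ref{cons:polarisation}) and then \emph{defines} $\P$ as its domains of linearity. Concretely, each edge $\tau$ of $P^\circ$ is subdivided into $|D(\tau^\star)|+2$ segments (the inner ones indexed by the Minkowski summands of the dual $2$-face of $P$), a scaled copy $\ell(\sigma^\star)\sigma$ of each $2$-face $\sigma$ of $P^\circ$ is embedded in the interior of $\sigma$ and maximally triangulated using the admissible tuple $(\mu_\sigma)$, and the facets of $P^\circ$ are then star-subdivided. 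The affine structure is imposed by explicit fan structures at the vertices of $\P$ (\S\ref{sec:tropical_manifold}), and simplicity of the discriminant is read off from those formulae; positivity and local rigidity then follow formally from simplicity via \cite[Theorems~5.2,~5.4]{GS06} and \cite[Remark~1.29]{Gross--Siebert}. Note the order: regularity is used \emph{at the outset} to build $\psi$ and hence $\P$, so the polarisation is tautological once $\P$ exists—the reverse of your plan to build $\P$ first and search for a convex $\varphi$ afterwards.

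Your alternative picture—tropical curves $\Delta_F\subset F$ dual to the Minkowski decompositions in the $2$-faces of $P$—is essentially the Legendre-dual heuristic and is not incoherent, but as written it is not a proof. You have not specified fan structures at the vertices of $P$ (where several decomposed $2$-faces meet), nor an actual polyhedral decomposition of $\partial P$; ``the cells dual to $D$'' does not name one, since a Minkowski decomposition of $F$ is not a subdivision of $F$. The paper sidesteps exactly the difficulty you flag as the principal obstacle by working on the $P^\circ$ side, where Minkowski summands correspond to \emph{segments of edges} rather than branches of a tropical curve, and the piecewise-linear fan-structure maps in \S\ref{sec:tropical_manifold} make the monodromy verification mechanical.
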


It follows from \cite[Theorem~$1.30$]{Gross--Siebert} that, if $(P,D)$ is regular, $X_0(P,D)$ is the central fibre of a formal degeneration of log Calabi--Yau manifolds. Moreover, following recent work \cite[Theorem~$4.4$]{RS:draft} of Ruddat--Siebert, this degeneration may be extended to a family over a disc in $\CC$. We also expect our construction to be compatible with very recent work of Felten--Filip--Ruddat~\cite{FFR19} on the smoothability of toroidal crossing spaces. Indeed, we expect that the results of \cite{FFR19} will allow us to replace the condition of local rigidity required to apply the results of \cite{Gross--Siebert} with a more geometric argument, working only with the degeneration of a Calabi--Yau toric hypersurface to the toric boundary of the ambient toric variety (which is generally not locally rigid). If completed this would also suggest a direct link to work of Lee~\cite{L17,L18} on smoothing normal crossings Calabi--Yau varieties. The spaces we smooth in our construction are typically not normal crossings, but it is likely that there is significant overlap in the sets of Calabi--Yau threefolds obtained by these methods. 

\begin{rem}
	To prove Theorem~\ref{thm:main_cons}, we construct an integral affine manifold with singularities $B$, together with a polyhedral decomposition $\P$ of $B$ from $(P,D)$. To form input data to the Gross--Siebert algorithm from this we also require a \emph{polarisation} on $B$. This is the integral affine analogue of a strictly convex piecewise linear function. The existence of such a function for our decomposition imposes an obstruction, determined by the geometry of $P$ and $D$, on our ability to use the Gross--Siebert algorithm to form a smoothing. We propose that this condition captures a genuine geometric obstruction to smoothing the singularities of a toric Calabi--Yau hypersurface.
\end{rem}


In the second part of this article we study the topology general fibre of such a family. Precisely, we should analyse the \emph{Kato--Nakayama space} associated to the toric log Calabi-Yau $X_0(P,D)$; however, we replace this analysis with the topological model -- which we refer to as $X(P,D)$ -- introduced by Gross in \cite{G01I,G01}. It is a long-standing conjecture that these two spaces are homotopy equivalent (after fixing a \emph{phase} for the Kato--Nakayama space) and this is not expected to be difficult in dimension three.

\begin{conjecture}
	\label{conj:KN_spaces}
	Points of the Kato--Nakayama space $X^{\KN}$ with fixed phase form a topological space homotopy equivalent to $X(P,D)$.
\end{conjecture}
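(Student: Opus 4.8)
The plan is to realise both spaces as the total spaces of stratified torus fibrations over the integral affine base $B$, and then to construct a fibrewise homotopy equivalence between them, working cell by cell over the polyhedral decomposition $\P$. By construction (following Gross \cite{G01,G01I}) the topological model $X(P,D)$ comes equipped with a continuous projection $\pi\colon X(P,D)\to B$ whose fibre over a point of the smooth locus $B_0 = B\setminus\Delta$ is a real three-torus and whose fibres degenerate in a prescribed way over the discriminant $\Delta$. The first step is to equip $X^{\KN}$, with its phase fixed, with a compatible projection to $B$: the Kato--Nakayama space carries a natural continuous ``tropicalisation'' map to the dual intersection complex $B$, and fixing the phase of the base coordinate removes precisely the extra $S^1$-factors introduced by the log structure of the degeneration, so that the fibres of the resulting map $\pi_{\KN}\colon X^{\KN}\to B$ have the expected dimension and $\pi_{\KN}$ is the analogue of the SYZ projection.

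Second, over the smooth locus $B_0$ I would identify both fibrations with the torus bundle $TB_0/\Lambda$ determined by the local system $\Lambda$ of integral tangent vectors of the affine structure. There $X_0(P,D)$ is locally the toric boundary of a smooth toric stratum, and the Kato--Nakayama construction produces exactly the phase torus dual to $\Lambda$; matching this with Gross's definition is a direct local computation, canonical once the integral affine monodromy of $B_0$ is recorded on both sides. This reduces the problem to a neighbourhood of the discriminant $\Delta$, which in dimension three is a trivalent graph of codimension two in $B$.

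The main work, and the expected principal obstacle, is to match the two models in a neighbourhood of $\Delta$, and in particular at its vertices. Along a generic edge of $\Delta$ the local affine monodromy is a single transvection, and both constructions produce the product of an $S^1$ with the standard focus--focus (nodal torus) degeneration; verifying agreement here is essentially the two-dimensional case. At the vertices of $\Delta$ the local model is governed by the \emph{positive}, \emph{locally rigid} structure of $X_0(P,D)$ supplied by Theorem~\ref{thm:main_cons}, together with the Minkowski data $D$, which pins down exactly which toric local models occur. Here I would compute the Kato--Nakayama fibre directly from the log structure of the relevant zero- and one-dimensional strata of $X_0$ and compare it with the explicit topological vertex models of \cite{G01}; the positivity hypothesis is what guarantees that only the models appearing in that construction arise, and establishing the homotopy equivalence of these local pieces, compatibly with the phase data, is the crux of the argument.

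Finally I would globalise. Choosing an open cover of $B$ subordinate to $\P$ (for instance the open stars of the vertices, refined near $\Delta$), the local homotopy equivalences constructed above are compatible on overlaps, because the gluing data on both sides is governed by the same integral affine transition functions and fan structure. A standard gluing argument for homotopy equivalences over a cover --- or, equivalently, a Mayer--Vietoris/nerve comparison together with the fact that both total spaces are finite CW complexes --- then upgrades the fibrewise equivalences to a genuine homotopy equivalence $X^{\KN}\simeq X(P,D)$. Since $\dim B = 3$ the combinatorics of $\Delta$ is simple enough that only the two types of trivalent vertex occur and no higher-codimensional local models need be analysed, matching the expectation that the statement is tractable in dimension three.
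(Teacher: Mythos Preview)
The paper does not prove this statement: it is stated as Conjecture~\ref{conj:KN_spaces} and is explicitly left open, with the remark that it ``is a long-standing conjecture\ldots not expected to be difficult in dimension three''. The paper simply \emph{assumes} it in order to interpret the topological invariants of $X(P,D)$ as invariants of the general fibre of the Gross--Siebert smoothing. So there is no proof in the paper against which to compare your proposal.

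As for the proposal itself, it is a reasonable outline of the strategy one would expect to follow --- matching the two fibrations over $B_0$ via the local system $\Lambda$, and then analysing local models along edges and at the two types of trivalent vertex of $\Delta$ --- and it is broadly consonant with the work of Arg\"uz--Siebert~\cite{AS16} cited in the paper. But it is a sketch, not a proof. The steps you flag as ``the crux of the argument'' are precisely the ones you do not carry out: you do not construct the map $\pi_{\KN}\colon X^{\KN}\to B$ (the ``tropicalisation'' you invoke is not defined in the paper and requires care, since $X_0$ is not a toric variety but a union of toric varieties glued along strata with a log structure); you do not actually compute the Kato--Nakayama fibre over a positive or negative vertex and exhibit the homotopy equivalence with Gross's explicit models; and the assertion that the local equivalences are compatible on overlaps ``because the gluing data on both sides is governed by the same integral affine transition functions'' hides exactly the coherence issue that makes this a conjecture rather than a lemma. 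In short: your plan is the expected one, but the paper neither supplies nor claims a proof, and your write-up does not fill the gap.
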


We then compute topological invariants of a $X(P,D)$. Assuming Conjecture~\ref{conj:KN_spaces}, these are the topological invariants of the general fibre of the Gross--Siebert family.

\begin{thm}
	\label{thm:invariants}
	The $6$-manifold $X(P,D)$ is simply connected. The topological Euler number $\chi(X(P,D))$ is equal to
	\[
	\sum_{F \in \Faces(P,2)}{\big(\#\{Q \in D(F) : \dim(Q) = 2\}\big)} - \sum_{G \in \Faces(P^\circ,2)}{\big(\ell(G^\star)^2\Vol(G)\big)},
	\]
	where $D(F)$ denotes the Minkowski decomposition of $F$ determined by $D$. The Betti number $b_2(X(P,D))$ is equal to $\gamma(P,D) - 3$; where $\gamma(P,D) = \dim \Gamma(P,D)$ is described in Definition~\ref{dfn:gammaPD}.
\end{thm}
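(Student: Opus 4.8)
The plan is to analyse the torus fibration $f\colon X(P,D)\to B$ underlying Gross's topological model \cite{G01,G01I}, where $B$ is the integral affine $3$-manifold with singularities built from $(P,D)$ (see the Remark following Theorem~\ref{thm:main_cons}), the generic fibre is $T^3$, and the discriminant $\Delta\subset B$ is a trivalent graph. Since $B$ is obtained by modifying the affine structure on $\partial P\cong S^3$ without altering the underlying topological manifold, we have $B\cong S^3$; in particular $\pi_1(B)=1$ and $H^2(B;\QQ)=0$. All three assertions will be extracted from the interplay between this fibration and the combinatorial description of $\Delta$ in terms of the faces of $P$ and $P^\circ$.

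For simple connectivity I would argue as follows. Because $\pi_1(B)=1$, every loop in $X(P,D)$ is homotopic into a single smooth fibre, so $\pi_1(X(P,D))$ is a quotient of $\pi_1(T^3)=\ZZ^3$. The relations are generated by the vanishing cycles: as a cycle in the fibre is transported to an edge of $\Delta$, the monodromy-invariant class that collapses there becomes nullhomotopic in $X(P,D)$. It therefore suffices to show that the vanishing cycles attached to the edges of $\Delta$ span $\ZZ^3$. This is a local computation at the two types of trivalent vertex, where the three incident vanishing cycles are primitive and pairwise independent; combined with the connectivity of $\Delta$ this forces their span to be all of $\ZZ^3$, so $\pi_1(X(P,D))=1$.

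For the Euler number I would use additivity and multiplicativity of $\chi$ over the stratification of $B$ by $\Delta$, so that $\chi(X(P,D))$ equals the sum over strata $S$ of $\chi_c(S)$ times the Euler characteristic of the fibre over $S$. Over the open stratum $B\setminus\Delta$ the fibre is $T^3$, and over the open edges of $\Delta$ the singular fibre retains a free $S^1$- or $T^2$-factor (the cycles transverse to the collapsing direction); in both cases $\chi=0$, so these strata contribute nothing. Hence $\chi(X(P,D))$ reduces to a signed count of the vertices of $\Delta$, each \emph{positive} vertex contributing $+1$ and each \emph{negative} vertex contributing $-1$. The remaining, and main, task is the combinatorial bookkeeping fixed when $B$ is built from $(P,D)$: the positive vertices are exactly the $2$-dimensional standard simplices appearing in the Minkowski decompositions $D(F)$ over the $2$-faces $F$ of $P$, while the negative vertices sit over the $2$-faces $G$ of $P^\circ$ and occur with multiplicity $\ell(G^\star)^2\Vol(G)$. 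Summing these two counts yields the stated formula.

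For $b_2$ I would compute $H^2(X(P,D);\QQ)$ via the Leray spectral sequence $E_2^{p,q}=H^p(B,R^qf_*\QQ)$. Since $B\cong S^3$ gives $E_2^{2,0}=H^2(S^3;\QQ)=0$, the group $H^2$ is assembled from $H^0(B,R^2f_*\QQ)$ and $H^1(B,R^1f_*\QQ)$, where $R^qf_*\QQ$ are the constructible sheaves determined by the fibrewise cohomology and its monodromy around $\Delta$. After checking that the relevant differentials vanish, I would identify the resulting group of global fibrewise $2$-classes that glue across $\Delta$ with the combinatorial space $\Gamma(P,D)$ of Definition~\ref{dfn:gammaPD}, the discrepancy being the $3$-dimensional space of affine-linear data carried by the $T^3$-direction, which produces the correction $b_2=\gamma(P,D)-3$. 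The hardest part is precisely this last identification: controlling the spectral sequence in the presence of the singular affine structure and matching the cohomological count of gluable $2$-cycles with the explicit dimension $\gamma(P,D)$, including the correct $3$-dimensional redundancy.
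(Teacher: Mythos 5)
Your treatment of the Euler characteristic is correct and coincides with the paper's (Lemma~\ref{lem:euler_number} together with Proposition~\ref{pro:euler_number}): only the fibres over vertices of $\Delta$ lack a circle factor, so $\chi$ is the signed count of vertices, and the identification of positive vertices with two-dimensional Minkowski summands and of negative vertices with the $\ell(G^\star)^2\Vol(G)$ triangles of a maximal triangulation is exactly the paper's bookkeeping. The other two parts, however, each contain a genuine gap.

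For simple connectivity, the local claim you lean on is false. In the normal forms \eqref{eq:positive_monodromy} all three monodromies at a vertex of one type are shears onto a \emph{common} primitive vector, so the three associated vanishing cycles coincide; at a vertex of the other type the three vanishing cycles are (up to a change of basis) $e_2$, $e_3$ and $e_2+e_3$, which are pairwise independent but satisfy a linear relation and span only a rank-two sublattice. Hence no single trivalent vertex yields a spanning set, and ``connectivity of $\Delta$'' does not by itself repair this: the vanishing cycle attached to a segment of $\Delta$ lying in a $2$-face $\sigma$ of $P^\circ$ is constrained by $\sigma$ alone, so one must combine data from segments lying in \emph{different} $2$-faces. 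This is precisely what the paper's Proposition~\ref{pro:simply_conn} does, in the dual formulation $H^0(B,R^1f_\star\ZZ_n)=0$: the monodromy-invariant plane for a segment in $\sigma$ is the tangent plane to $\sigma$, and one chooses three faces through a vertex of $P^\circ$ not all sharing an edge, whose tangent planes intersect trivially. Without this global input your argument does not close.

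For $b_2$ you propose running the Leray spectral sequence of $f\colon X\to B$ directly. The first step (degeneration at $E_2$, vanishing of $E_2^{2,0}$ and of $H^0(B,R^2f_\star\QQ)$, so that $b_2=h^1(B,R^1f_\star\QQ)$) is indeed available -- it is Proposition~\ref{pro:leray_degenerates}, quoted from Gross -- but the paper does \emph{not} then compute $h^1(B,R^1f_\star\QQ)$ on $B$; it contracts $X$ onto a degenerate union of toric varieties, $\xi\colon X\to X_0'$, and runs the Leray sequence of $\xi$ instead, so that the computation becomes sheaf theory on $X_0'$ (the sheaves $\cF$ and $\cG$, the count $t(P,D)$, and the toric spectral sequence of Lemma~\ref{lem:cohomology_vanishes}). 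Your proposal asserts, but does not derive, the identification of $H^1(B,R^1f_\star\QQ)$ with a quotient of $\Gamma(P,D)$, and the provenance of the constant $3$ is left as a heuristic (``affine-linear data carried by the $T^3$-direction''). In the paper the constant arises as $-4+1$: a copy of $M_\QQ\cong\QQ^4$ is quotiented out when $H^0(X_0',\cF)$ is embedded into $\bigoplus_\sigma T_vB/T_v\sigma$ (Lemma~\ref{lem:global_sections}), and a $+1$ returns from $H^2(X_0',\QQ)\cong\QQ$ of the projective degenerate fibre -- a class invisible from $H^\bullet(S^3)$ in your setup, so it would have to reappear inside $H^1(B,R^1f_\star\QQ)$. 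You acknowledge that this identification is the hardest part; it is in fact the entire content of the statement $b_2=\gamma(P,D)-3$, and it is missing from the proposal.
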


While the classification of all possible input data to Theorem~\ref{thm:main_cons} is expected to be computationally accessible, we defer such a computation to future work. However, we present a family of examples in \S\ref{sec:examples}. In each of these $P$ is a product of reflexive polygons, and this family combines a number of new examples with a number of classical cases. Following observations of Galkin~\cite{G15}, and constructions of Inoue~\cite{I19} and Knapp--Sharpe~\cite{KS19}, many of these examples are related to \emph{joins} of elliptic curves. Indeed, we expect all smoothing components of all joins of pairs of anti-canonical sections in del~Pezzo surfaces of degree at least $3$ to be detected by Theorem~\ref{thm:main_cons}. In future work \cite{P:Joins} we will also consider products of $3$-dimensional s.d.\ reflexive polytopes with a length two line segment, related to an algebro-geometric version of the suspension of a K$3$ surface; we expect many of the Calabi--Yau threefolds constructed by Lee in~\cite{L17} appear in this way. Among the examples we consider in \S\ref{sec:examples}, we describe pairs $(P,D)$, where $P$ is the product of two lattice hexagons, in particular detail.

\begin{pro}
	\label{pro:prod_dP6}
	Let $P_6$ be the integral hexagon associated with the toric del Pezzo surface of degree $6$. Consider the four-dimensional polytope $P := P_6 \times P_6$. From the toric variety $X_P$ we can construct $14$ topological types of Calabi--Yau threefolds, $5$ of which have $b_2=1$ and invariants which do not appear in the list of Kapustka~\cite{K15}, or among recent constructions of Lee~\cite{L17,L18}.
\end{pro}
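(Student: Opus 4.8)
The plan is to apply Theorems~\ref{thm:main_cons} and~\ref{thm:invariants} directly to $P = P_6\times P_6$, reducing the proposition to a finite enumeration. First I would record the combinatorics. A product of reflexive polytopes is reflexive, so $P$ is $4$-dimensional reflexive, and $P^\circ = P_6^\circ\oplus P_6^\circ \cong P_6\oplus P_6$ (free sum), using self-duality of the hexagon. The $2$-faces of $P$ fall into three types: the twelve hexagonal faces $P_6\times\{v\}$ and $\{w\}\times P_6$ (for $v,w$ vertices), and the thirty-six parallelograms $e\times e'$ (products of edges). Using the product/join duality $(F_A\times F_B)^\star = F_A^\star * F_B^\star$, I would check that in every case $F^\star$ is a primitive edge of $P^\circ$, so $\ell(F^\star)=1$; the polygons to be decomposed are therefore the faces themselves, unscaled.

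Next I would enumerate the admissible Minkowski decompositions into standard simplices. Each unit parallelogram $e\times e'$ admits only the decomposition $[0,e]+[0,e']$ into two segments (a triangle cannot appear, as the square presents just two edge directions). Each hexagon $P_6$ admits exactly two: the sum $\Delta+(-\Delta)$ of an ``up'' and a ``down'' standard triangle, and the zonotopal sum $[0,v_1]+[0,v_2]+[0,v_3]$ of three unit segments; a direction-balancing argument shows these are the only possibilities. Crucially, both of these, together with the parallelogram decomposition, induce the trivial (unit) subdivision on every boundary edge, so the choices on distinct faces are independent and automatically compatible across shared edges. The family of pairs $(P,D)$ is thus parametrised by a choice of $A$ (two triangles) or $B$ (three segments) on each of the twelve hexagonal faces, i.e.\ by $2^{12}$ raw configurations, which I would reduce modulo $\Aut(P)=(\Aut(P_6)\times\Aut(P_6))\rtimes\ZZ/2$ (noting that each hexagon's dihedral symmetry fixes both $A$ and $B$, since each of the two decompositions is unique) and then discard those failing the regularity condition of Definition~\ref{dfn:regular}.

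With the regular configurations in hand I would compute invariants via Theorem~\ref{thm:invariants}. The second sum is constant: every triangular $2$-face $G$ of $P^\circ$ has $\Vol(G)=1$ and primitive dual edge, giving $\sum_G \ell(G^\star)^2\Vol(G)=72$; the first sum counts $2$-dimensional pieces, contributing $2$ per $A$-hexagon and $0$ otherwise. Hence $\chi(X(P,D))=2a-72$, where $a$ is the number of hexagonal faces decomposed as $A$. I would then evaluate $b_2=\gamma(P,D)-3$ from Definition~\ref{dfn:gammaPD} for each configuration, and for those with $b_2=1$ additionally compute the ring-level invariants (the degree $H^3$ and $c_2\cdot H$) needed to pin down a topological type of rank-one threefold. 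Collating $(\chi,b_2,H^3,c_2\cdot H)$ over the regular configurations should yield exactly $14$ distinct types, five of them with $b_2=1$.

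The main obstacle is the identification step, not the enumeration. Since $\chi=2a-72$ is injective in $a$, the thirteen possible Euler numbers cannot separate fourteen types, so at least one value of $a$ must split into two distinct types; confirming this requires distinguishing them by $b_2$ and, when $b_2$ also agrees, by the data $(H^3,c_2\cdot H)$, while dually confirming that all other regular configurations sharing a given $a$ genuinely collapse to a single type. Establishing novelty of the five $b_2=1$ examples is more delicate still: one must compare their full invariant vectors against every entry of Kapustka's list~\cite{K15} and of Lee's constructions~\cite{L17,L18}, and argue that no coincidence of deformation type --- not merely of an isolated numerical invariant --- can occur. A secondary difficulty is verifying completeness of the regularity analysis, since the existence of a polarisation is precisely the obstruction flagged in the remark following Theorem~\ref{thm:main_cons}.
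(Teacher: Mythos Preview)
Your overall approach matches the paper's: enumerate the $2^{12}$ choices of $D$ (two options on each hexagonal face, the unique option on each square), compute $\chi$ and $b_2$ via Theorem~\ref{thm:invariants}, impose regularity, and count. The paper organises this by the pair $(n_1,n_2)$ recording how many hexagons of each family $\{v\}\times P_6$ and $P_6\times\{w\}$ are decomposed into two triangles; it then proves (Proposition~\ref{pro:eg_invariants}) the clean formula $\chi=2(n_1+n_2)-72$ and a case-split for $b_2$ depending only on which of $n_1,n_2$ lie in $\{0\}$, $\{6\}$, or $\{1,\ldots,5\}$. Regularity is checked by computer (Proposition~\ref{pro:regular_examples}): $(P,D)$ is regular precisely when neither $n_i$ lies in $\{1,5\}$.

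One point where your expectation is slightly off: you anticipate having to \emph{split} a single Euler number into two types using $(H^3,c_2\cdot H)$. In fact $H^3=\Vol(P^\circ)=36$ is constant over all $D$, and the paper classifies topological types purely by $(b_2,\chi)$. The count of $14$ arises the other way round: after imposing regularity one has $15$ unordered pairs $(n_1,n_2)$ with $n_i\in\{0,2,3,4,6\}$, and the two configurations $(2,4)$ and $(3,3)$ \emph{coincide} as types (both give $(b_2,\chi)=(1,-60)$). So the subtlety is a collapse, not a split; no invariant beyond $(b_2,\chi)$ is invoked. The five $b_2=1$ types are then $(1,\chi)$ for $\chi\in\{-64,-62,-60,-58,-56\}$, and the paper simply records that these do not occur in \cite{K15} or \cite{L17,L18} without a further argument on deformation type.
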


In a somewhat different language, the toric variety $X_P$ associated to $P$ has $36$ ordinary double point singularities and $12$ singularities locally isomorphic to the anti-canonical cone on the del Pezzo surface of degree $6$. The latter $12$ singularities admit two deformation components; the archetypal `Tom' and `Jerry' (see Brown--Kerber--Reid~\cite{BKR12}). Famous results of Friedman \cite{F91} and Tian \cite{T92} show that smoothing nodal singularities is generally obstructed. Consonant with this, we must verify a global condition (existence of a polarisation) before we are able to smooth all $48$ singularities of $X_P$. In this language, we expect the join construction of Knapp--Sharpe~\cite[\S$2.5$]{KS19} to correspond to the `simultaneous Jerry' smoothing. We show that Calabi--Yau threefolds recently obtained by Inoue~\cite{I19} via other join constructions also fit into our framework.

Finally, we note that there are geometric transitions between the Calabi--Yau threefolds we construct, and those obtained via Batyrev's construction \cite{B94}. While these transitions are more general than conifold transitions (indeed, the singular locus appearing in the middle of the transition is generally non-isolated) one can view the construction we present as an extension of the approach taken by Batyrev--Kreuzer in \cite{BK10}.

---------------------------------------

\subsection*{Acknowledgements}

We thank Johanna Knapp for bringing our attention to recent progress on smoothing joins of elliptic curves. There is also a clear intellectual debt owed to the work of Gross and Siebert, and to Gross' study of topological mirror symmetry. TP was supported by a Fellowship by Examination at Magdalen College, Oxford.
\section{Simply decomposable polytopes}
\label{sec:sd-poytopes}

We construct smoothings of Calabi--Yau toric hypersurfaces with singularities belonging to a certain class. The following definition provides a combinatorial description of this class of toric singularities. Recall that we say that two lattice polytopes are \emph{equivalent} if they differ by the composition of an integral linear map and a translation by a lattice vector.

\begin{dfn}
	\label{dfn:sd_polytope}
	Given a four-dimensional reflexive polytope $P$, we say that $P$ is \emph{simply decomposable (s.d.\@)} if every $2$-dimensional face of $P$ admits a Minkowski decomposition into lattice polytopes, each of which is equivalent to a standard simplex.
\end{dfn}

We assume throughout this article that $P \subset N_\RR$, where $N_\RR = N \otimes_\ZZ \RR$ and $N \cong \ZZ^4$. We let $M$ denote the lattice dual to $N$, and set $M_\RR := M \otimes_\ZZ \RR$.

\begin{dfn}
	\label{dfn:std_decomp}
	Given a four dimensional s.d.\ reflexive polytope $P$, let $D$ be a function which sends each polygon $Q$ in
	\[
	\left\{\ell(F^\star)F : F \in \Faces(P,2)\right\}.
	\]
	to a Minkowski decomposition of $Q$. If each summand in each element of $D$ is equivalent to a standard simplex, we say that $D$ is a \emph{standard decomposition}. We write $D(\rho)$ the Minkowski decomposition of $\rho \in \Faces(P,2)$ contained in $D$. Note that each component $D(\rho)$ of $D$ is a multiset. 
\end{dfn}

\begin{rem}
	The Minkowski decompositions which appear in $D$ need not be \emph{lattice} Minkowski decompositions, for example we admit the Minkowski decomposition illustrated in Figure~\ref{fig:minksum}.
\end{rem}

\begin{figure}
	\includegraphics[scale=0.25]{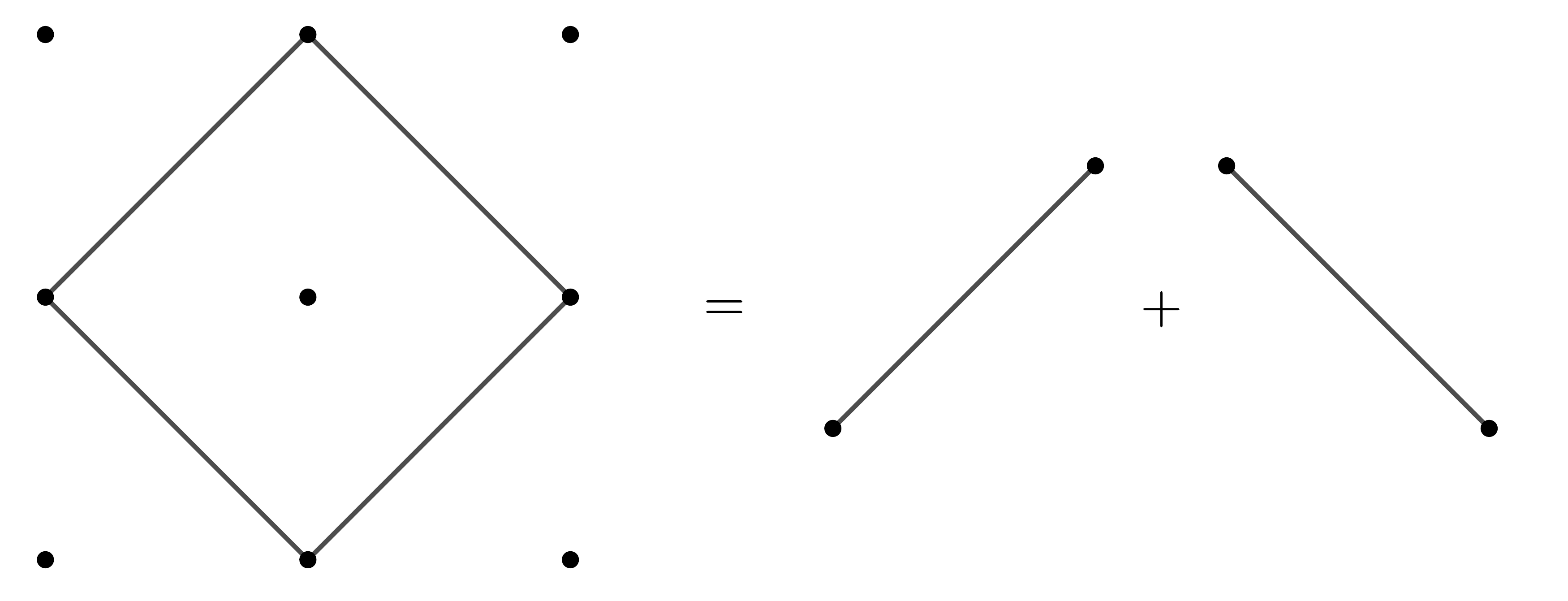}
	\caption{Example of a non-lattice Minkowski decomposition.}
	\label{fig:minksum}
\end{figure}

In \S\ref{sec:smoothing_polytope} we define an integral affine structure on $\partial P$. However, before this can be used as input to the Gross--Siebert algorithm, this integral affine structure must admit a `polarisation'. This is equivalent to the existence of a certain piecewise linear (PL) function on $\partial P$. To describe this we first note that, fixing $\rho \in \Faces(P,2)$, there is a canonical matching (or multi-valued function) between the edges of any $m \in D(\rho)$ and the edges of $\rho$. We let $\Edges(\rho,m) \subseteq \Edges(\rho)$ denote the subset of edges of $\rho$ matched with some edge of $m$. Observe that, given a $2$-dimensional face $\sigma \in \Faces(P^\circ,2)$ and edge $\tau \in \Edges(\sigma)$, the set
\[
S(\sigma,\tau) := \{m \in D(\tau^\star) : \sigma^\star \in \Edges(\tau^\star,m)\} 
\]
is in bijection with lattice length one segments of $\ell(\sigma^\star)\tau$. Moreover, after fixing an orientation of the edges and $2$-dimensional faces of $P^\circ$, and an ordering of $D(\tau^\star)$, there is a canonical such bijection.

\begin{dfn}
	\label{dfn:regular}
	Let $(\mu_\sigma : \sigma \in \Faces(P^\circ,2))$ be a tuple of PL functions on the polygons $\ell(\sigma^\star)\sigma$ which are strictly convex on a maximal triangulation. Let $\sigma_1$ and $\sigma_2$ be $2$-dimensional faces of $P^\circ$ meeting along an edge $\tau \in \Edges(P^\circ)$. Let $a_1$ and $a_2$ be segments of $\ell(\sigma^\star_1)\tau$ and $\ell(\sigma^\star_2)\tau$ which are both identified with a single element of $S(\sigma_1,\tau) \cap S(\sigma_2,\tau)$. We call a tuple of PL functions \emph{admissible} if the slope of $\mu_{\sigma_1}$ along $a_1$ coincides with the slope of $\mu_{\sigma_2}$ along $a_2$ for all $\tau$, $\sigma_i$, and $a_i$ for $i \in \{1,2\}$. If $(P,D)$ admits an admissible tuple of PL functions, we say that $(P,D)$ is \emph{regular}.
\end{dfn}

We present an algorithm to verify whether such a function exists in general, and include Magma code to check this in supplementary material. To describe this algorithm we introduce the notion of a \emph{slope function} $V$ for $(P,D)$. These functions determine the slopes of a PL function along the edges of $P^\circ$.

Fix a polytope $P$ and $D$ a set of Minkowski decompositions of its $2$-dimensional faces. Let $V$ be a function taking each factor $m$ in the multiset 
\[
\bigcup_{\rho \in \Faces(P,2)}{D(\rho)}
\]
to an element in $\QQ$. We call $V$ a slope function for the pair $(P,D)$. Observe that an admissible tuple of PL functions for $(P,D)$ uniquely determines a slope function.

Recall that we have fixed orientations of each $\sigma \in \Faces(P^\circ,2)$ and edge $\tau \in \Edges(\sigma)$. We set $\sgn(\sigma,\tau) := 1$ if the orientation of $\tau^\star$ agrees with the clockwise ordering of the edges of $\sigma$, and $\sgn(\sigma,\tau) := -1$ if not.

\begin{dfn}
	\label{dfn:consistent}
	We call a slope function \emph{consistent} if, for each $\sigma \in \Faces(P^\circ,2)$, we have that
	\[
		\sum_{\tau \in \Edges(\sigma)}\sum_{\substack{m \in D(\tau^\star)\\ \sigma^\star \in \Edges(\tau^\star,m)} }{\sgn(\sigma,\tau)V(m)} = 0.
	\]
\end{dfn}

\begin{dfn}
	\label{dfn:strictly_convex}
	We call a slope function \emph{strictly convex} if elements in the multi-set $\{V(m) : m \in D(\rho)\}$ are pairwise distinct for any $\rho \in \Faces(P,2)$.
\end{dfn}


Fixing a $\sigma \in \Faces(P^\circ,2)$, we define $\bar{\sigma} := \ell(\sigma^\star)\sigma$ and, given an element $\tau \in \Edges(\sigma)$, we define $\bar{\tau} := \ell(\sigma^\star)\tau$. Fix a face $\sigma \in \Faces(P^\circ,2)$ and an edge $\tau \in \Edges(\sigma)$. Given a consistent strictly convex slope function $V$, we fix a piecewise linear function $\psi_\sigma$ on $\partial \bar{\sigma}$ with slope equal to $V(m)$ under the bijection between length one segments of $\bar{\tau}$ and $S(\sigma,\tau)$. Note that, appropriately ordering $D(\tau)$, $\{V(m) : m \in D(\tau) \}$ forms a monotone sequence, and $\psi_\sigma$ is convex on each $\bar{\tau}$. This piecewise linear function determines a polyhedral decomposition $T_0(\bar{\sigma})$ of $\bar{\sigma}$ by projecting $2$-dimensional faces of the convex hull $\Gamma$ of the points
\[
\{(x,\psi_\sigma(x)) : x \in \partial(\bar{\sigma})\}.
\] 

Following common terminology in polyhedral combinatorics, see for example \cite{NZ11}, we refer to a polytope such that only lattice points are its vertices as an \emph{empty} polytope. Similarly, we refer to a polytope whose set of lattice points equals the set of lattice points contained in its boundary as a \emph{hollow} polytope.

\begin{dfn}
	\label{dfn:regular_slope}
	 We call a consistent strictly convex slope function $V$ on $(P,D)$ \emph{regular} if, for any $\sigma \in \Faces(P^\circ,2)$ and any empty polygon $Q$ in $T_0(\bar{\sigma})$ is equivalent to a standard triangle. 
\end{dfn}

\begin{pro}
	\label{pro:also_regular}
	The pair $(P,D)$ is \emph{regular} if and only if there exists a consistent strictly convex slope function $V$ on $(P,D)$ such that for any $\sigma \in \Faces(P^\circ,2)$, and any polygon $Q$ in $T_0(\sigma)$, if $Q$ contains no lattice points other than its vertices, $Q$ is equivalent to a standard triangle. 
\end{pro}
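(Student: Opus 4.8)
The plan is to prove both implications through the dictionary, recorded just before Definition~\ref{dfn:consistent}, between admissible tuples of PL functions and slope functions. Concretely, a slope function $V$ records, for each $\sigma \in \Faces(P^\circ,2)$ and each edge $\bar\tau$ of $\bar\sigma$, the slopes of a single-valued PL function $\psi_\sigma$ along the unit segments of $\bar\tau$ under the bijection with $S(\sigma,\tau)$. Consistency (Definition~\ref{dfn:consistent}) is exactly the assertion that the signed slopes close up around $\partial\bar\sigma$, so that $\psi_\sigma$ is well defined; strict convexity (Definition~\ref{dfn:strictly_convex}) ensures that, after the monotone reordering of each $D(\tau^\star)$, the function $\psi_\sigma$ is strictly convex along every $\bar\tau$. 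Thus a consistent, strictly convex slope function is the same datum as a compatible family $(\psi_\sigma)$ of strictly convex boundary functions, and $T_0(\bar\sigma)$ is the subdivision cut out by the lower faces of the graph of $\psi_\sigma$ over $\partial\bar\sigma$. I would first make this correspondence precise, matching the sign conventions $\sgn(\sigma,\tau)$ to the direction in which $\partial\bar\sigma$ is traversed.

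For the implication that regularity of $(P,D)$ yields a regular slope function, fix an admissible tuple $(\mu_\sigma)$, each $\mu_\sigma$ strictly convex on a maximal triangulation of $\bar\sigma$, and restrict to the boundary to obtain the $\psi_\sigma$. Admissibility guarantees that the slope attached to a summand $m$ is independent of the face along whose edge it is read off, so the tuple determines a slope function $V$; since each $\mu_\sigma$ is genuinely single-valued, $V$ is consistent, and since the edge slopes of $\mu_\sigma$ are strictly monotone, $V$ is strictly convex (separating the finitely many values across distinct edge directions may require a harmless generic perturbation within the linear space of consistent slope functions, keeping the maximal triangulation fixed). It remains to check that $V$ is regular in the sense of Definition~\ref{dfn:regular_slope}. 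The vertices of every cell of $T_0(\bar\sigma)$ lie on $\partial\bar\sigma$, where $\mu_\sigma$ agrees with $\psi_\sigma$, so the lifted vertices of a cell $Q$ are coplanar, spanning the affine function defining that lower face. If $Q$ were an empty cell with more than three vertices, any triangulation of $Q$ into standard triangles could use only the vertices of $Q$; each such triangle would then have all three vertices pinned to this affine function, forcing $\mu_\sigma$ to be affine on $Q$ and contradicting strict convexity on a maximal triangulation. Hence every empty cell is a triangle, and being empty it is standard by Pick's formula.

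For the converse, start from a regular slope function $V$, form the boundary functions $\psi_\sigma$ and the subdivisions $T_0(\bar\sigma)$, and extend each $\psi_\sigma$ to a strictly convex $\mu_\sigma$ on a maximal triangulation without altering the boundary values. Working cell by cell, a cell $Q$ that is not empty contains a lattice point that is not a vertex; by strict convexity of $\psi_\sigma$ along the edges, every boundary lattice point is already a vertex of $T_0(\bar\sigma)$, so such a point is interior to $\bar\sigma$ and its height is free. Lowering these interior heights by a small generic amount creases $Q$ into a unimodular, strictly convex refinement fixing the boundary; by regularity of $V$ the empty cells are already standard triangles and need no refinement. Choosing the perturbations small and generic, the local refinements assemble into a single strictly convex $\mu_\sigma$ on a maximal triangulation. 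Because only interior heights were changed, the boundary slopes—and hence the matching of slopes across shared edges—remain those prescribed by $V$, so the resulting tuple is admissible and $(P,D)$ is regular.

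The crux of both directions is the same convex-geometric statement: a flat cell $Q$ of $T_0(\bar\sigma)$ admits a strictly convex unimodular refinement fixing the prescribed boundary data if and only if $Q$ is not an empty polygon with more than three vertices. The forward half (empty non-triangles obstruct) reduces to the coplanarity/pinning argument above and is clean. The reverse half is the more delicate point: one must verify that the cellwise downward perturbations of the interior lattice points can be chosen so that the pieces glue into a globally convex PL function realising an honest maximal triangulation, while leaving the boundary values—and therefore both consistency and admissibility—untouched. I expect this gluing, rather than the bookkeeping of the slope/PL dictionary, to be where the real work lies.
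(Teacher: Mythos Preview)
Your proposal is correct and follows essentially the same route as the paper: extract the slope function from an admissible tuple and use the pinning/coplanarity argument to show an empty non-triangle in $T_0(\bar\sigma)$ would be a domain of linearity for $\mu_\sigma$; conversely, extend $\psi_\sigma$ over the interior by lowering heights at interior lattice points, using the regularity hypothesis to handle the remaining (empty) cells. Two small remarks: the perturbation you propose is unnecessary, since strict convexity of $V$ (Definition~\ref{dfn:strictly_convex}) only demands distinctness within each $D(\rho)$, which follows directly from strict convexity of $\mu_\sigma$ along each $\bar\tau$; and the gluing concern you flag at the end is sidestepped in the paper by performing the lowerings one interior point at a time on the \emph{global} convex hull $\Gamma$ (so each step is a star subdivision preserving global convexity), rather than cell by cell.
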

\begin{proof}
	Fix a face $\sigma \in \Faces(P^\circ,2)$. Let $\Gamma$ be, as above, the graph of the PL function on $\bar{\sigma}$ determined by $V$. Note that that, since $V$ is strictly convex, $\psi_\sigma$ is strictly convex on each edge $\bar{\tau}$ of $\bar{\sigma}$ (that is, the function bends non-trivially at each integral point on each edge).

	We iteratively modify $\Gamma$ to define a convex PL function $\psi_\sigma$ on $\bar{\sigma}$. In particular, fix a lattice point $x$ in the relative interior of $\bar{\sigma}$, and let $h(x) \in \QQ$ be such that $(x,h(x)) \in \Gamma$. We replace $\Gamma$ with the convex hull of its vertex set together with $(x,h(x)-\eta)$ for a sufficiently small value of $\eta \in \QQ_{> 0}$. This modification induces a star subdivision of the triangulation induced by $\Gamma$ at $x$. Iterating this modification over every integral $x$ in the relative interior of $\bar{\sigma}$, we obtain a piecewise linear function which induces a polyhedral decomposition $T$ of $\bar{\sigma}$. It remains to check that $T$ corresponds to a maximal triangulation of $\bar{\sigma}$. We consider two cases:
	\begin{enumerate}
		\item By construction, every polygon meeting an integral point $x$ in the relative interior of $\bar{\sigma}$, is a triangle; moreover, since every such $x$ is a vertex, all such triangles are empty, and hence standard.
		\item Since $V$ is regular and strictly convex, polygons which do not contain any such $x$ are also standard.
	\end{enumerate}
	
	Conversely, given an admissible tuple of PL functions, the slopes along the edges define a consistent strictly convex slope function $V$. If this were not regular, $\mu_\sigma$ would not be admissible for some $\sigma \in \Faces(P,2)$. Indeed, the triangulation induced by the restriction of $\mu_\sigma$ to $\partial \bar{\sigma}$ (analogous to the construction of the `unmodified' graph $\Gamma$) contains an empty polygon which is not a standard simplex. This polygon is a domain of linearity for $\mu_\sigma$, contradicting admissibility.
\end{proof}

We note that the condition that empty polygons in each triangulation $T_0(\bar{\sigma})$ are standard triangles is a generic condition, and we completely classify the situations in which it can fail.

\begin{pro}
	\label{pro:irregular}
	Let $T$ be a polyhedral decomposition of a polygon $\bar{\sigma}$ such that the vertex set of $T$ is equal to the set of integral points in $\partial \bar{\sigma}$. If $T$ contains an empty polygon $Q$, and $Q$ is \emph{not} isomorphic to a standard simplex, then $Q$ is equivalent to an empty lattice square and $\bar{\sigma}$ is hollow.
\end{pro}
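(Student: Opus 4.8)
The plan is to first pin down the combinatorial type of $Q$ using the classification of empty lattice polygons, and then to leverage the fact that the vertices of $Q$ must sit on $\partial\bar{\sigma}$ in order to force hollowness.

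\textbf{Identifying $Q$.} First I would show that an empty lattice polygon has at most four vertices. If two vertices $u,u'$ of $Q$ lay in the same class of $N/2N$, then $\tfrac12(u+u')$ would be a lattice point of $Q$ (it lies in $Q$ by convexity, and in $N$ since $u-u'\in 2N$) distinct from every vertex, as it sits in the relative interior of an edge or in the interior of $Q$; this contradicts emptiness. Since $N/2N$ has four elements, $Q$ has at most four vertices. If $Q$ is a triangle, Pick's formula gives $\Vol(Q)=\tfrac12$, so $Q$ is a standard simplex. If $Q$ is a quadrilateral, Pick's formula gives $\Vol(Q)=1$; normalising one edge to $\conv{(0,0),(1,0)}$ and using that each of the two triangles spanned by this edge and one of the remaining vertices lies in $Q$ (hence has area at most $1$), I would deduce that the other two vertices both lie on the line $y=1$. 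Primitivity of the top edge, again from emptiness, then places them at $(c,1)$ and $(c+1,1)$, and a shear identifies $Q$ with the unit square. As $Q$ is assumed not to be a standard simplex, this shows $Q$ is an empty lattice square, establishing the first claim.

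\textbf{Locating the square inside $\bar{\sigma}$.} Since the vertices of $T$ are exactly the lattice points of $\partial\bar{\sigma}$, the four corners of $Q$ lie on $\partial\bar{\sigma}$. After an equivalence I may assume $Q=\conv{(0,0),(1,0),(1,1),(0,1)}$, with corners $w_1,\dots,w_4$ labelled cyclically. The key step is to show that $\bar{\sigma}$ meets none of the four open ``corner cones'', for example $\{x>1,\ y>1\}$ at $w_3=(1,1)$. Indeed, if some $q=(1+s,1+t)$ with $s,t>0$ lay in $\bar{\sigma}$, a direct barycentric computation shows that $w_3$ lies in the interior of the triangle $\conv{w_2,w_4,q}\subseteq\bar{\sigma}$, contradicting $w_3\in\partial\bar{\sigma}$. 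Running the same argument at each corner, $\bar{\sigma}$ avoids all four open corner cones, so
\[
\bar{\sigma}\subseteq\{0\le x\le 1\}\cup\{0\le y\le 1\}.
\]

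\textbf{Concluding hollowness.} It remains to check that every lattice point of $\bar{\sigma}$ lies on $\partial\bar{\sigma}$. Let $p=(m,n)\in\bar{\sigma}\cap N$. By the containment above, $m\in\{0,1\}$ or $n\in\{0,1\}$. If, say, $n=0$ and $m\notin\{0,1\}$, then every point just below $p$ has $y<0$ and $x\notin[0,1]$, hence lies outside the cross-shaped region and so outside $\bar{\sigma}$; thus $p\in\partial\bar{\sigma}$. The cases $n=1$, $m=0$ and $m=1$ are identical after applying the symmetry of the square, and if $m\in\{0,1\}$ and $n\in\{0,1\}$ then $p$ is one of the corners $w_i\in\partial\bar{\sigma}$. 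Hence $\bar{\sigma}$ has no interior lattice point, i.e.\ it is hollow. I expect the corner-cone exclusion to be the main obstacle: this is where the hypotheses genuinely interact, since it is precisely the requirement that the square's corners remain extreme on $\partial\bar{\sigma}$ that prevents $\bar{\sigma}$ from bulging out diagonally and acquiring interior lattice points. Once $\bar{\sigma}$ is confined to the cross-shaped region the hollowness is a routine case check, and the classification of $Q$ is standard.
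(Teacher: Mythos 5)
Your proof is correct and follows essentially the same route as the paper's: the key step in both is the observation that a point of $\bar{\sigma}$ in an open corner cone of the unit square would place the corresponding vertex of $Q$ in the interior of $\bar{\sigma}$, confining $\bar{\sigma}$ to the cross-shaped region $([0,1]\times\RR)\cup(\RR\times[0,1])$ and hence forcing hollowness. You additionally supply proofs of the two facts the paper simply cites as well known (the classification of empty lattice polygons, and that polygons confined to the cross region are hollow), which is fine but not a different approach.
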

\begin{proof}
	It is well known that the only empty polygons are those equivalent to the standard simplex, and empty lattice square. Let $Q$ is an empty lattice square in $T$ and, without loss of generality assume that $Q$ is the convex hull of the points given by the columns of the matrix
	\[
	\begin{matrix}
	0 & 1 & 1 & 0 \\
	0 & 0 & 1 & 1.
	\end{matrix}
	\]
	Suppose there exists a point $(a,b) \in Q$ such that $a,b < 0$; and thus that $(0,0)$ lies in the convex hull of $x$ and the points $(0,1)$ and $(1,0)$. However this is a contradiction to $(0,0)$ lying in the boundary of $Q$. Thus $Q$ is contained in the union of $[0,1] \times \RR$ and $\RR \times [0,1]$; the only such polygons are hollow.
\end{proof}

It is well known that the only hollow polygons are either equivalent to the Cayley product of two line segments or to double the standard simplex. Thus the only possible cases in which regularity may fail are:
\begin{enumerate}
	\item If $\bar{\sigma}$ is a Cayley product of two line segments, $T(\bar{\sigma})$ contains an empty square face if and only if opposite length one segments have the same slope.
	\item  If $\bar{\sigma}$ is a twice the standard simplex, $T(\bar{\sigma})$ contains an empty square face if and only if, up to an affine linear transformation $\bar{\sigma}$ can be taken to the configuration shown in Figure~\ref{fig:bad_triangle}; where $a$, $b$ and $c$ denote the slopes along the edges they label and $a+b+c=0$.
\end{enumerate}

\begin{figure}
	\includegraphics[scale=0.25]{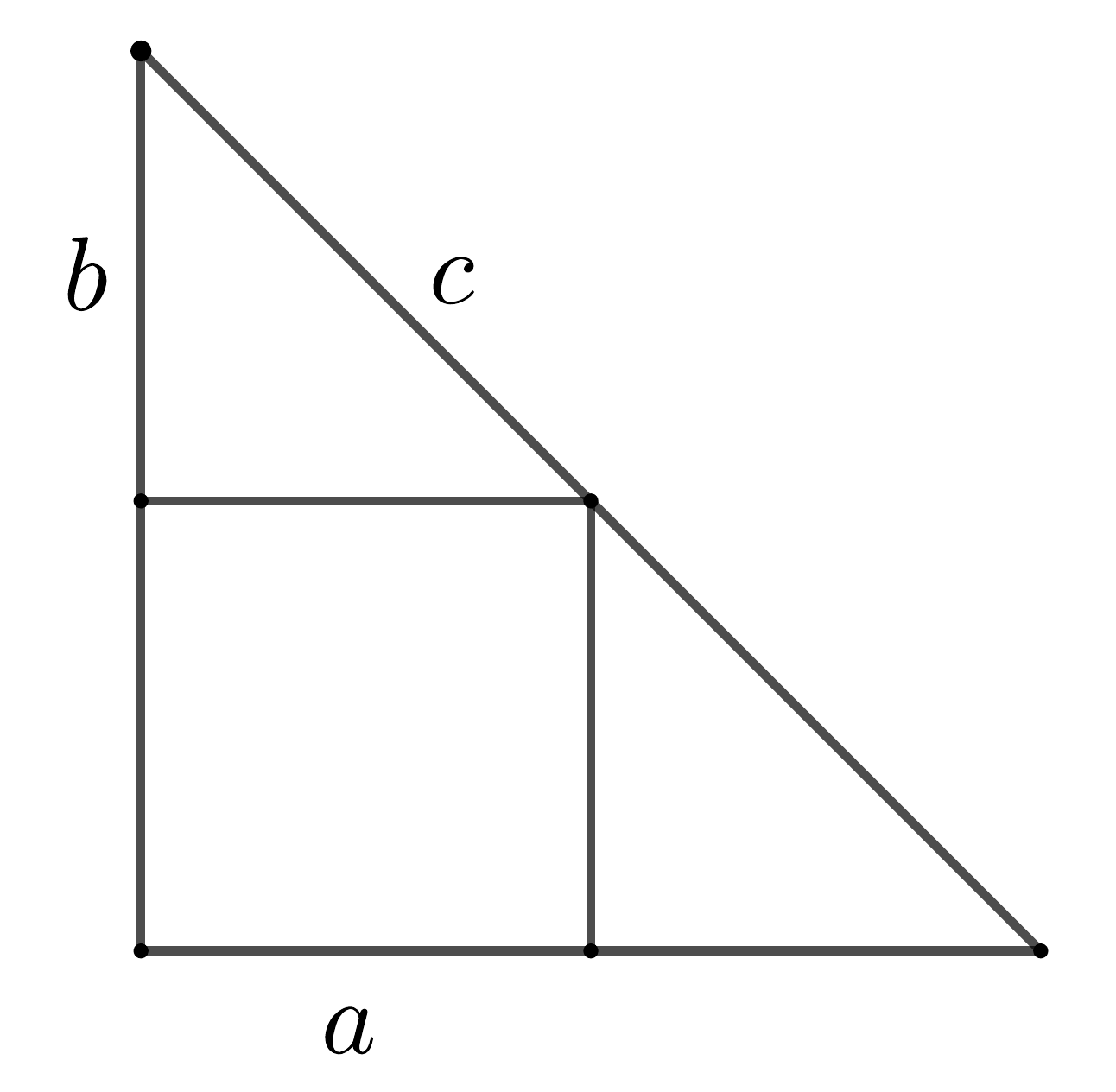}
	\caption{Irregular strictly convex slope function.}
	\label{fig:bad_triangle}
\end{figure}



\section{Smoothing the boundary of a reflexive polytope}
\label{sec:smoothing_polytope}
\subsection{A degenerate affine structure}
\label{sec:degenerate}

The Gross--Siebert algorithm requires both \emph{discrete} and \emph{algebraic} input. The discrete data consists of a triple $(B,\P,\varphi)$ where $B$ is an integral affine manifold; $\P$ is a polyhedral decomposition; and $\varphi$ is a multi-valued piecewise linear function, see \cite[Definition~$1.45$]{GS06}. In this section we show how to assign such discrete data to a $4$-dimensional s.d.\ polytopes, together with choices of Minkowski decompositions of its two-dimensional faces. We also describe the algebraic input in \S\ref{sec:main_result}; in fact, using the main results of \cite{GS06}, we explicitly describe the space of admissible algebraic data.

Given a reflexive polytope $P$ there is a construction (using the degeneration of a general anti-canonical section of $X_P$ to the union of all codimension $1$ toric strata of $X_P$), following Gross~\cite[\S$2$]{G05}, of an integral affine structure on $\partial P$ without any further input data. We follow the description given in \cite[Definition~$2.10$]{G05}.

\begin{cons}
	Let $\Delta^c$ denote the union of ($1$-dimensional) cells of the first barycentric subdivision of $\partial P^\circ$ which do not intersect the interior of a facet of $P^\circ$, or contain a vertex of $P^\circ$. We define an integral affine manifold with singularities $B^c$ by defining an integral affine structure on $B^c \setminus \Delta^c$ as follows.
	\begin{enumerate}
		\item For each facet $F$ of $P^\circ$, the affine structure on the interior of $F$ is determined by the composition $F \hookrightarrow P^\circ \hookrightarrow \RR^4$. Identifying the minimal affine linear space containing $F$ with $\RR^3$ identifies the interior of $F$ with a domain in $\RR^3$.
		\item Given a vertex $v$ of $P^\circ$, let $W_v$ denote the open star of $v$ in the first barycentric subdivision of $\partial P^\circ$. We define the affine chart 
		\[
		\psi_v \colon W_v \to M_\RR / \langle v \rangle 
		\]
		by projection.
		\end{enumerate}  
\end{cons}

The singular locus of this affine structure is a graph, with vertices located at the barycentres of the $2$-dimensional faces and edges of $P^\circ$. Loops around the edges of $\Delta$ define \emph{monodromy} for this integral affine structure. This monodromy was computed in this case in \cite[Proposition~$2.13$]{G05}, similar to calculations made by Ruan \cite{R07} and Haase--Zarkov~\cite{HZ05}.

Fix vertices $v$ and $v'$ of $P^\circ$ contained in facets $F_1$ and $F_2$. Choose a loop $\gamma$ based at $v_1$ which passes successively into the interior of $F_1$, though $v_2$, the interior of $F_2$, and back to $v_1$. The monodromy of the affine structure around $\gamma$ is given by the linear map
\[
T_\gamma(m) = m + \langle n_2-n_1, m \rangle (v_1-v_2).
\] 
where $n_1$ and $n_2$ are the lattice points at vertices dual to the facets $F_1$ and $F_2$ respectively. In particular, assuming that $v_1$ and $v_2$ are contained in an edge $\tau$ of a two-dimensional face $\sigma = F_1 \cap F_2$, in suitable co-ordinates $T_\gamma(m)$ is given by the matrix
\[
\begin{pmatrix}
1 & 0 & \ell(\sigma^\star)\ell(\tau) \\
0 & 1 & 0 \\
0 & 0 & 1
\end{pmatrix}.
\]

In \cite{GS06, Gross--Siebert} the authors show how to form a toric log Calabi--Yau space from an integral affine manifold with \emph{simple} singularities (and additional data). The general definition of simplicity for the singular locus of an affine structure is given in \cite[Definition~$1.60$]{GS06}. The restriction to the case $\dim B =3$ is described in detail in \cite[Example~$1.62$]{GS06} and we recall the main points of this description. We first fix an integral affine manifold $B$ and polyhedral decomposition $\P$; recalling that polyhedral decompositions of integral affine manifolds with singularities are carefully defined in \cite[Definition~$1.22$]{GS06}.

\begin{enumerate}
	\item The discriminant locus $\Delta$ is a trivalent graph, with vertices contained a minimal cell $\tau$ of dimension $1$ or $2$.
	\item Loops passing singly around a segment of the discriminant locus induce monodromy operators on the tangent spaces of $B$. The corresponding matrices are conjugate to 
	\[
	T_g := 
	\begin{pmatrix}
	1 & 0 & 1 \\
	0 & 1 & 0 \\
	0 & 0 & 1
	\end{pmatrix}.
	\]
	\item\label{it:positive} If $v$ is a vertex of $\Delta$ contained in an edge, the monodromy matrices induced by loops passing singly around edges of $\Delta$ containing $v$ are simultaneously congruent to the matrices
	\begin{align}
		\label{eq:positive_monodromy}
		T_1 := \begin{pmatrix}
		1 & 1 & 0 \\
		0 & 1 & 0 \\
		0 & 0 & 1
	\end{pmatrix} &&
	T_2 := \begin{pmatrix}
		1 & 0 & 1 \\
		0 & 1 & 0 \\
		0 & 0 & 1
	\end{pmatrix} &&
	T_3 := \begin{pmatrix}
		1 & -1 & -1 \\
		0 & 1 & 0 \\
		0 & 0 & 1
	\end{pmatrix}.
	\end{align} 
	\item If $v$ is a vertex of $\Delta$ not contained in an edge, the monodromy matrices induced by loops passing singly around edges of $\Delta$ containing $v$ are simultaneously congruent to the inverse transposes of the matrices appearing in \eqref{it:positive}.
\end{enumerate}

Fixing a reflexive polytope $P$, the integral affine manifold $B^c$ does not have simple singularities in general, for any choice of polyhedral decomposition; indeed $\Delta^c$ is not usually trivalent, and the monodromy operators around segments of $\Delta^c$ are not usually of the correct form. In \cite{G05,HZ05} the authors construct an affine manifold related to $B^c$ from triangulations of both $\partial P$ and $\partial P^\circ$. We describe an alternative perturbation of $\Delta^c$, in the case that $P$ is an s.d.\ polytope.

\subsection{Constructing a polarisation}
\label{sec:cons_polarisaton}

We construct a convex piecewise linear function $\varphi^r$ on $\partial P^\circ$, and use its domains of linearity to define a polyhedral decomposition $\P$. Later, we fix \emph{fan structures}, see \cite[Definition~$1.1$]{Gross--Siebert}, at each vertex of this decomposition to define an integral affine structure. The function $\varphi^r$  (the superscript $r$ denotes \emph{refined}, in contrast with the superscript $c$ in $B^c$ which denotes \emph{coarse}) determines a polarisation $\varphi$ on this tropical manifold.

\begin{cons}
	\label{cons:polarisation}
	Fix an s.d.\ polytope $P$ and a standard decomposition $D$ of its $2$-dimensional faces. Let $\varphi^c$ be the  piecewise linear function on $M_\RR$ which evaluates to $1$ at each vertex of $P^\circ$ (and corresponds to an anti-canonical divisor of the toric variety defined by the normal fan of $P$). We define
	\[
	\varphi^r := \varphi^c + \epsilon \psi
	\] 
	where $\epsilon \in \QQ_{>0}$ is a sufficiently small rational value (the precise value of $\epsilon$ is unimportant as the domains of linearity of $\varphi^r$ are constant for all sufficiently small values). The function $\psi$ is a piecewise linear (not necessarily convex) function which we now construct.
	
	In order to construct $\psi$ we assume that $(P,D)$ is regular, and fix an admissible tuple of PL functions $(\mu_\sigma : \sigma \in \Faces(P,2))$. Let $V$ denote the corresponding strictly convex slope function. Note that in choosing such a function we implicitly fix an orientation on the edges and $2$-dimensional faces of $P^\circ$; as well as an ordering of $D(\rho)$ for each $\rho \in \Faces(P,2)$ which induces a monotone ordering of $\{V(m) : m \in D(\rho)\}$ . 
	
	Given an edge $\tau$ of $P^\circ$, subdivide $\tau$ into $|D(\tau^\star)|+2$ intervals. The ordering of $D(\tau^\star)$ determines a bijection between the $|D(\tau^\star)|$ intervals which do not meet a vertex of $\tau$, and $D(\tau^\star)$. Let $\psi|_\tau$ be linear on each segment with slope equal to $V(m)$ along the segment corresponding to $m \in D(\tau^\star)$. We insist that $\psi(v) = 0$ at each vertex $v$ of $\tau$; in particular $\psi|_\tau$ is a non-positive function. These conditions uniquely determine $\psi_\tau$ up to a single real parameter (the value of $\psi(x)$ for any $x$ in the relative interior of $\tau$). We fix this parameter arbitrarily.
	
	Next, fix a $2$-dimensional face $\sigma \in \Faces(P^\circ,2)$. To define $\psi|_\sigma$ we choose an embedding $j$, a composition of a rational scaling and translation, of $\bar{\sigma} = \ell(\sigma^\star)\sigma$ into the relative interior of $\sigma$. We define $\sigma' := j(\bar{\sigma})$. Recall that, fixing an edge $\tau$ of $\sigma$, the set 
	\[
		\{m \in D(\tau^\star) : \sigma^\star \in \Edges(\tau^\star,m)\} 
	\]
	is in canonical bijection with the set of lattice one line segments of $\bar{\tau} = \ell(\sigma^\star)\tau$. For a point $x \in \sigma'$ we let $\psi(x)$ be $\mu_\sigma(j^{-1}(x))- K$, for a fixed large positive integer $K$. Note that we have also defined $\psi$ over $\partial \sigma$, as this is a union of edges. We extend $\psi$ between $\partial \sigma$ and $\partial \sigma'$ by taking $\psi|_\sigma$ to be the unique PL function with graph equal to the convex hull of the points $(x,\psi(x))$ where $x \in \sigma' \cup \partial \sigma$. We illustrate an examples of the resulting triangulation in Figure~\ref{fig:triangulation}, in which $\sigma'$ is shaded.

	\begin{figure}
		\includegraphics[scale=0.4]{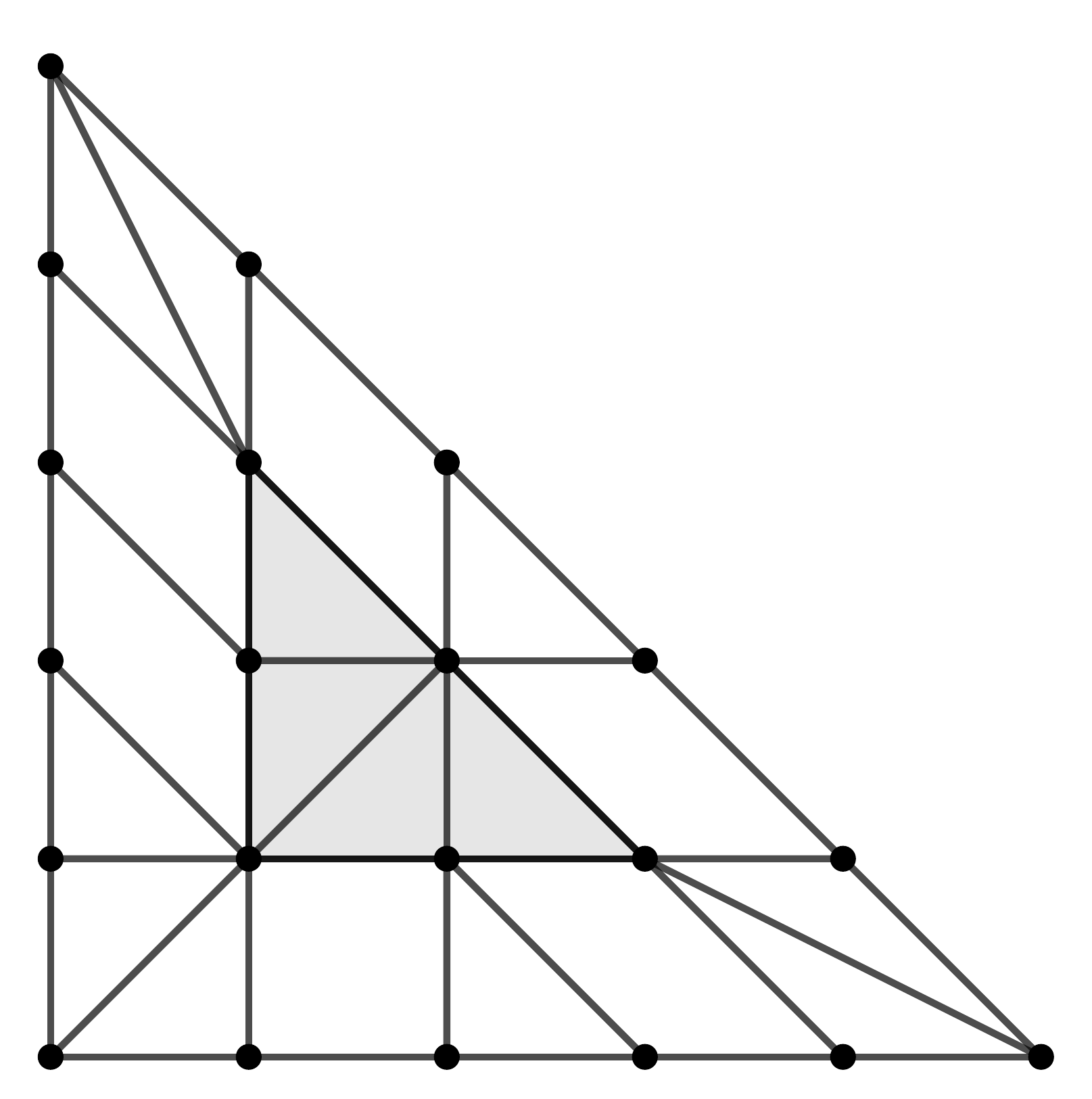}
		\caption{The triangulation of a face $\sigma$ induced by $\psi_\sigma$.}
		\label{fig:triangulation}
	\end{figure}

	Finally, fix a facet $\rho$ of $P^\circ$. We extend $\psi$ arbitrarily across $\rho$ subject to the condition that the domains of linearity are either simplices, or polyhedral cones over rectangular faces in a $2$-dimensional face, and $\psi|_\rho$ is convex. Such a triangulation can be achieved by star subdivision of $\rho$, and repeated star subdivisions of the $3$-dimensional cells of this decomposition preserve this condition.
\end{cons}

Via Construction~\ref{cons:polarisation} we have a convex piecewise linear function $\varphi^r$ on $P^\circ$, and we may define a polyhedral decomposition $\P$ of $P^\circ$ via the domains of linearity of $\varphi^r$.

\subsection{Tropical manifolds via fan structures}
\label{sec:tropical_manifold}

We now define an integral affine structure $B$, such that the pair $(B,\P)$ has simple singularities along its discriminant locus $\Delta$. Following the description of such structures used in \cite{Gross--Siebert} we define the integral affine structure $B$ via \emph{fan structures} \cite[Definition~$1.2$]{Gross--Siebert}. Recall that a fan structure consists of maps from the open stars of faces of $\P$ to $\RR$, together with certain compatibility conditions. We define fan structures at the vertices of $P^\circ$, and inductively extend them over the vertices of $\P$. In what follows we let $F_\tau$ denote the minimal face of $P^\circ$ which contains a face $\tau$ of $\P$.

\begin{cons}
	Inductively, we assume that we have defined a fan structure at a vertex $v$ of $\P$, and that $\tau \in \Edges(\P)$ has vertices $\{v,v'\} \in \V{\P}$. Let $\bar{v} \in \V{P^\circ}$ be a vertex of $F_\tau$. We also assume inductively that the fan structure $\pi_v$ on the open star of $v \in \V{\P}$ is given by a composition $T \circ \pi_{\bar{v}}$ for a piecewise linear map $T$ (defined on $\RR^3$) and that $F_v \subseteq F_{v'}$.
	
	First, if $v = v' = \bar{v} \in \V{P^\circ}$, we define a fan structure via the projection $\pi_v \colon M_\RR \to M_\RR/\langle v \rangle \cong \RR^3$. Cones in this fan structure are given by the images of cells in $\P$. We now inductively extend the fan structure to other vertices of $\P$.
	\begin{enumerate}
		\item If $\tau \subset \sigma'$ for some $\sigma \in \Faces(P^\circ,2)$, $v = j(v_1)$ and $v' = j(v_2)$ for some vertices $v_1$ and $v_2$ of $\bar{\sigma}$. We set $\pi_{v'} = T \circ \pi_v$ where
		\[
		T(x) := x + \min\{0,\langle x,u \rangle\}\pi_v(v_1-v_2)
		\]
		\item If $\tau \subset \rho$ for some $\rho \in \Edges(P^\circ)$, and $\tau$ is a segment of $\rho$ which corresponds to a summand $m \in D(\rho^\star)$, we set $\pi_{v'} = T \circ \pi_v$ where
		\[
		T(x) := x + \min_{w \in \V{m}}\{\langle x,w \rangle\}\pi_v(d_\rho)
		\]
		where $d_\rho$ is the primitive direction vector along $\rho$ pointing from $v$ to $v'$.
		\item In any other case, the fan structure at $v'$ is given by applying $\pi_v$ to the open star of $v'$.
	\end{enumerate}
\end{cons}

In all cases the cones of the fan structure are given by the images of cells in $\P$. Note that there are various sign choices involved: the choice of orientation of $2$-dimensional faces and edges; however, different choices yield equivalent choices of fan structure (choices which differ by a linear function). We let $B(P,D)$ denote the integral affine manifold with singularities defined by these fan structures.

As defined, the discriminant locus of $B(P,D)$ is not a trivalent graph. However, following \cite[Proposition~$1.27$]{GS06} (see also \cite[\S$4$]{G05}) we may extend the affine structure over any branch of the discriminant locus around which monodromy of the lattice $\Lambda$ is trivial. The following result is now immediate from standard computations of affine monodromies. We illustrate an example of the discriminant locus $\Delta$ in a face $\sigma \in \Faces(P^\circ,2)$ in Figure~\ref{fig:delta}.

\begin{pro}
	Extending the affine structure of $B(P,D)$ over the complement of the minimal discriminant locus defines an affine manifold with simple singularities.
\end{pro}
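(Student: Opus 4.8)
The plan is to verify the four local conditions recalled above that together define simplicity in dimension three: that the minimal discriminant locus $\Delta$ is trivalent, that the monodromy around each of its edges is conjugate to $T_g$, and that at the two types of trivalent vertex the three monodromies are simultaneously congruent to $T_1, T_2, T_3$ as in \eqref{eq:positive_monodromy} (resp.\ to their inverse transposes). Since $B(P,D)$ is defined purely through the explicit transition maps $T$ of the preceding construction, each of these is a direct computation with those piecewise-linear maps; this is the sense in which the result is ``immediate from standard computations of affine monodromies.''

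First I would identify the minimal discriminant locus. The fan structures agree across any cell on which the transition map is the identity (case (iii)), so nontrivial monodromy is supported on the cells arising in cases (i) and (ii); invoking \cite[Proposition~$1.27$]{GS06} to extend across every branch with trivial lattice monodromy leaves the minimal $\Delta$. Within a $2$-face $\sigma$ this is the trivalent graph of Figure~\ref{fig:delta} dual to $T_0(\bar\sigma)$, while along an edge $\rho$ of $P^\circ$ the vertices of $\Delta$ sit at the subdivision points indexed by the summands $m \in D(\rho^\star)$. Here the combinatorics is transparent: in case (ii) the transition map involves $\min_{w \in \V{m}}\langle x, w\rangle$, whose number of domains of linearity equals the number of vertices of $m$, so a $1$-dimensional summand contributes a single edge of $\Delta$ and a $2$-dimensional summand contributes a trivalent vertex. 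Trivalence at every retained vertex follows.

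Next I would compute the monodromy around a single edge of $\Delta$. A loop encircling such an edge is a composition of two transition maps, each a shear $x \mapsto x + \langle x, \ast\rangle\,\pi_v(w)$ in the single direction $\pi_v(v_1 - v_2)$ (case (i)) or $\pi_v(d_\rho)$ (case (ii)); its linear part is therefore a rank-one shear, which in adapted coordinates is exactly $T_g$. The substantive step is the analysis at the trivalent vertices. For a vertex whose minimal cell is an edge $\rho$ of $P^\circ$, the three shears are those meeting at a $2$-dimensional summand $m \in D(\rho^\star)$; because $D$ is a standard decomposition $m$ is a standard triangle, its three primitive edge directions are the relevant shear directions, and I expect these to assemble into the positive normal form $T_1, T_2, T_3$ of \eqref{eq:positive_monodromy}. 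For a vertex whose minimal cell is a $2$-face $\sigma$, the three shears are governed by the empty triangle of $T_0(\bar\sigma)$ meeting there, standard by regularity, and the same computation should deliver the inverse-transpose normal form.

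I expect the vertex analysis to be the main obstacle, for two reasons. First, one must choose coordinates adapted to each vertex and check that the three shear directions and their primitive normals genuinely close up into the prescribed normal form; this is exactly where standardness of $m$ and regularity of $T_0(\bar\sigma)$ are indispensable, since a non-standard empty polygon would produce a shear of the wrong direction or integral length and break simplicity. Second, the orientation and ordering conventions fixed in the construction enter the sign bookkeeping distinguishing the two families of vertices, and these must be tracked consistently. Once both normal forms are confirmed, all four conditions hold and the extended affine structure on $B(P,D)$ has simple singularities.
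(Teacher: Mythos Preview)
Your proposal is correct and is precisely the ``standard computation of affine monodromies'' that the paper invokes without spelling out; the paper gives no proof beyond that one sentence, and your outline accurately reconstructs the omitted argument, including the correct identification of positive vertices with two-dimensional summands $m\in D(\rho^\star)$ along edges $\rho$ of $P^\circ$ and negative vertices with standard triangles of the maximal triangulation inside each $\bar\sigma$.
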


\begin{figure}
	\includegraphics[scale=0.4]{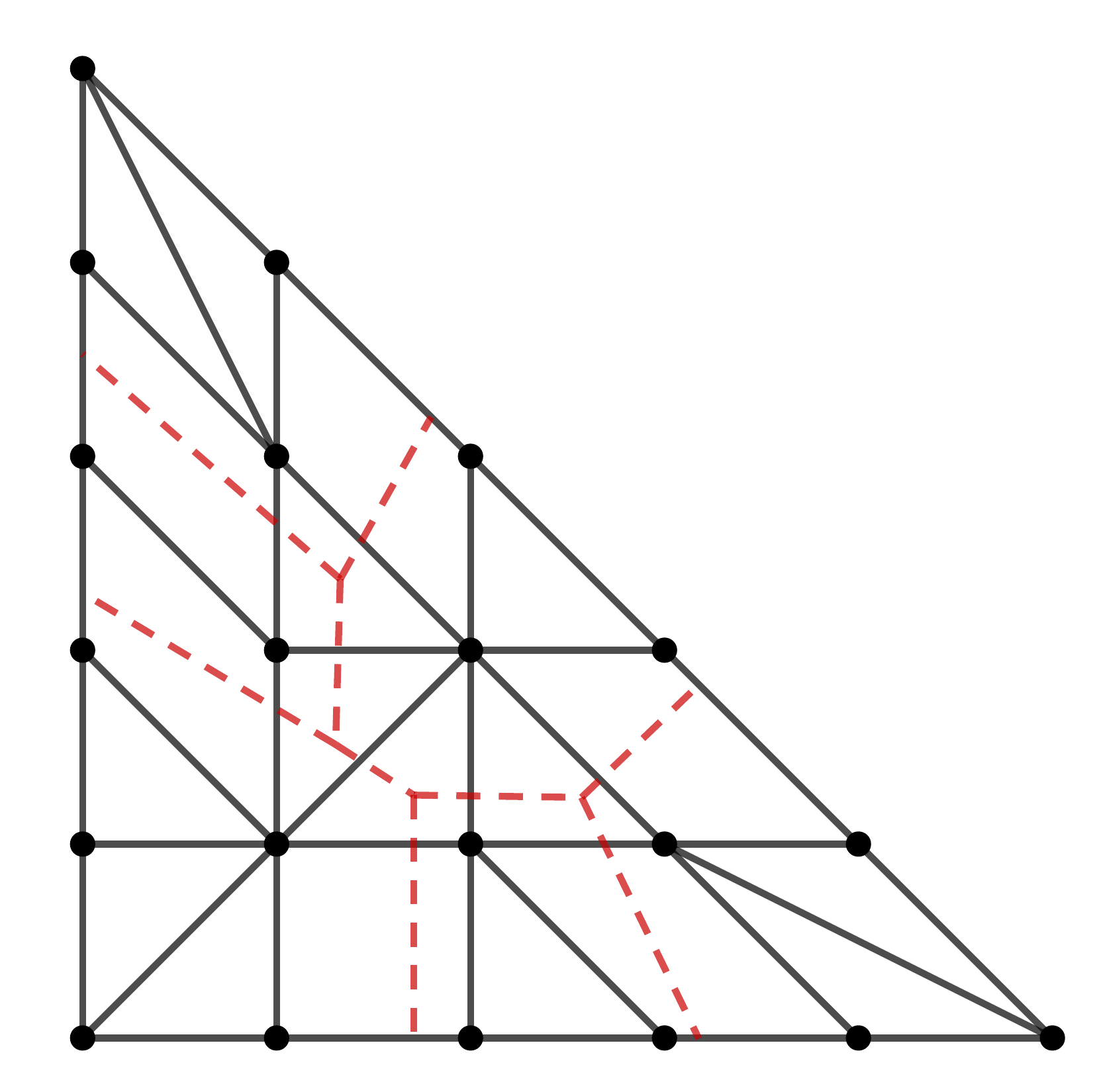}
	\caption{The discriminant locus in a $2$-dimensional face.}
	\label{fig:delta}
\end{figure}

The function $\varphi^r$ provides local representatives for a strictly convex multi-valued piecewise linear function $\varphi$ on $B$. Indeed, this follows immediately from the linearity of $\varphi^r$ on each cell of $\P$; see for example \cite[Definition~$2.14$]{G05}.

We next explain how to incorporate the algebraic data required to apply the Gross--Siebert reconstruction algorithm \cite[Theorem~$1.30$]{Gross--Siebert} into this construction, from which we deduce the existence of a Calabi--Yau threefold $X$ obtained as a smoothing.

\subsection{Proof of Main Result}
\label{sec:main_result}

We recall the statement of our main result concerning the construction of smooth Calabi--Yau threefolds from toric hypersurfaces.

\begin{thm}
	\label{thm:main_cons_2}
	Let $(P,D)$ be a pair consisting of a $4$-dimensional s.d.\ polytope $P$, and a standard decomposition $D$. The pair $(P,D)$ determines a locally rigid, positive, toric log Calabi--Yau space $X_0(P,D)$. Moreover, the toric log Calabi--Yau space $X_0(P,D)$ admits a polarisation if $(P,D)$ is regular.
\end{thm}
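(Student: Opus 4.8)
The plan is to assemble the discrete and algebraic input data required by the Gross--Siebert reconstruction theorem and verify each hypothesis (local rigidity, positivity, the log Calabi--Yau condition, and, conditionally on regularity, polarisation) for the triple $(B,\P,\varphi)$ constructed in the preceding subsections. The backbone is already in place: Construction~\ref{cons:polarisation} produces the convex PL function $\varphi^r$ on $P^\circ$ whose domains of linearity define the polyhedral decomposition $\P$, the fan-structure construction of \S\ref{sec:tropical_manifold} produces the integral affine manifold $B(P,D)$, and Proposition~\ref{pro:also_regular} shows that after extending over the complement of the minimal discriminant locus the singularities are \emph{simple}. Thus my first step is to record that $(B,\P)$ is an integral affine manifold with simple singularities, so that the hypotheses of \cite[Definition~$1.60$]{GS06} are met and a toric log Calabi--Yau space $X_0(P,D)$ is determined up to the choice of algebraic data (open gluing / lifted gluing data).

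Second, I would verify \emph{positivity}: by construction the monodromy of $B(P,D)$ around each vertex of $\Delta$ contained in an edge is simultaneously conjugate to the triple $T_1,T_2,T_3$ of \eqref{eq:positive_monodromy} rather than to their inverse transposes, which is precisely the local positivity condition. This is where the sign conventions fixed during the fan-structure construction matter, and I would trace through a single vertex to confirm the orientation is the positive one; the global statement then follows since every edge-vertex of $\Delta$ is of this form. For \emph{local rigidity} I would appeal to the fact that all singularities are simple and that in dimension three simple singularities are locally rigid (the local deformation space of a simple trivalent vertex is trivial), invoking the explicit classification in \cite[Example~$1.62$]{GS06}; this reduces local rigidity to the already-established simplicity.

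Third comes the algebraic data. The log Calabi--Yau structure requires a consistent choice of (lifted) gluing data, and by the main results of \cite{GS06} the space of such data is nonempty and can be described cohomologically; I would state that since $B$ is simple and positive the obstruction groups governing the existence of compatible gluing data vanish, so $X_0(P,D)$ genuinely exists as a toric log Calabi--Yau space. Finally, the \emph{polarisation} clause: a polarisation is exactly a strictly convex multi-valued PL function $\varphi$ on $(B,\P)$, and the closing remark of \S\ref{sec:tropical_manifold} already notes that $\varphi^r$ furnishes local representatives for such a $\varphi$, with strict convexity following from the linearity of $\varphi^r$ on each cell together with the strict convexity of the slope function $V$ guaranteed when $(P,D)$ is regular. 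So the polarisation step is where the regularity hypothesis is consumed, and I would assemble it by checking that the local representatives $\varphi^r|_{\text{cell}}$ glue to a well-defined element of the sheaf of multi-valued PL functions and that the induced kink on each codimension-one cell is strictly positive.

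The main obstacle, I expect, is the positivity/local-rigidity verification at the vertices of $\Delta$ \emph{not} contained in an edge: at these interior vertices of a two-dimensional face the monodromy should be congruent to the inverse transposes of $T_1,T_2,T_3$, and I must confirm that the fan structures of \S\ref{sec:tropical_manifold} --- built from the graph $\Gamma$ of $\mu_\sigma$ over $\sigma'$ --- actually produce this dual configuration rather than some non-simple degeneration, especially near the non-lattice Minkowski summands where the matching $\Edges(\rho,m)$ is subtle. Carefully matching the combinatorics of $D(\rho)$ to the monodromy computation of \S\ref{sec:degenerate} (the map $T_\gamma(m)=m+\langle n_2-n_1,m\rangle(v_1-v_2)$) at every type of vertex is the delicate bookkeeping on which both simplicity and positivity ultimately rest.
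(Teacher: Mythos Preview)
Your proposal is correct and follows essentially the same route as the paper's proof: assemble $(B,\P,\varphi)$ from the preceding constructions, use the already-established simplicity of the singularities to deduce local rigidity (the paper cites \cite[Remark~$1.29$]{Gross--Siebert} for this implication), invoke \cite[Theorems~$5.2$ and~$5.4$]{GS06} for the existence and parametrisation of the log structure on $X_0(B,\P,s)$ with vanilla gluing data, and read off the polarisation from $\varphi^r$ under the regularity hypothesis. One small slip: simplicity is established by the proposition in \S\ref{sec:tropical_manifold}, not Proposition~\ref{pro:also_regular} (which concerns the equivalence of the two characterisations of regularity); consequently your anticipated ``main obstacle''---the monodromy check at negative vertices---is already absorbed into that simplicity result and requires no further verification here.
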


This result now follows directly from the results and constructions of \cite{Gross--Siebert}, applied to the polarised tropical manifold $(B,\P,\varphi)$.

\begin{proof}[Proof of Theorem~\ref{thm:main_cons}]
	As explained in \S\ref{sec:cons_polarisaton} and \S\ref{sec:tropical_manifold}, we can construct a polarised tropical manifold $(B,\P,\varphi)$ from the pair $(P,D)$. Following \cite[\S$1.2$]{Gross--Siebert} we can form the underlying variety of a toric log CY space $X_0(B,\P,s)$ once we have fixed \emph{open gluing data} $s$. It suffices for our purposes to suppress the choice of open gluing data; in the terminology of \cite{Gross--Siebert} we choose open gluing data cohomologous to zero (sometimes called \emph{vanilla} gluing data).

	We now need to fix a \emph{log structure} on $X_0(B,\P,s)$. These are determined by sections of a sheaf
	\[
	\LS \cong \bigoplus_{\rho \in \Faces(\P,2)}{\cN_\rho},
	\]
	where the sheaves $\cN_\rho$ are certain rank one sheaves determined by $(B,\P,s)$, such that 
	\begin{enumerate}
		\item the section contains no toric stratum of $X_0(B,\P,s)$ and
		\item the section satisfies the compatibility condition \cite[$(1.8)$]{Gross--Siebert}.
	\end{enumerate}
	Locally, each $2$-dimensional toric stratum $X_\rho$ of $X_0(B,\P,s)$ is either isomorphic to $\PP^2$ (and corresponds to a triangle in a maximal triangulation of $\bar{\sigma}$ for some $\sigma \in \Faces(P^\circ,2)$), in which case $\cN_\rho$ is $\cO_{\PP^2}(1)$; or $X_\rho$ has a map to $\PP^1$, in which case $\cN_\rho$ is the sheaf associated to a fibre of this map.
	
	Since $B$ is positive and simple, we can apply the main results of \cite{GS06}. Indeed, by \cite[Theorem~$5.2$]{GS06}, there is a unique normalised section of the bundle $\LS$ which induces a log CY structure on $X_0(B,\P,s)$. Moreover, by \cite[Theorem~$5.4$]{GS06}, sections of $\LS$ which determine a toric log CY space are in bijection with $H^1(B,\iota_\star \breve{\Lambda} \otimes_\ZZ \kk^\star)$. We compute this dimension using topological methods in \S\ref{sec:topology}.

	Finally, we need to verify that this log structure is locally rigid, \cite[Definition~$1.26$]{Gross--Siebert}. However, by \cite[Remark~$1.29$]{Gross--Siebert} this follows immediately from the simplicity of the pair $(B,\P)$. Hence we may apply \cite[Theorem~$1.30$]{Gross--Siebert} and \cite[Theorem~$4.4$]{RS:draft} to obtain a family which smooths $X_0(B,\P,s)$. It follows from \cite[Proposition~$2.2$]{GS10} (see also \cite[p.$44$]{GS10}) that the general fibre of such an family has at worst codimension $4$ singularities, and is thus smooth if $X_0(B,\P,s)$ is $3$-dimensional.
\end{proof}

\begin{rem}
	We note that while $X_0(P,D)$ may not be projective, the obstruction to projectivity described in \cite[Theorem~$2.34$]{GS06} vanishes if the gluing data $s$ is a coboundary. Hence, as the diffeomorphism type of $X_0(P,D)$ is not affected by $s$, $h^2(X_0(P,D),\QQ) \geq 1$; a fact we make use of in \S\ref{sec:topology}.
\end{rem}
\section{Topological analysis}
\label{sec:topology}

Given an s.d.\ polytope $P$ and a standard decomposition $D$, we have shown how to construct a Gross--Siebert smoothing from $(P,D)$. We now make an analysis of a topological space $X := X(P,D)$ we can associate with $B := B(P,D)$. The space $X$ admits the structure of a \emph{well-behaved} (see \cite{G01}) torus fibration. In particular, there is a map 
\[
f \colon X \to B,
\]
such that fibres over the smooth locus $B_0 := B \setminus \Delta$ are three dimensional (real) tori. We set $X_0 := X_0(P,D)$ and write $X(B)$ for the topological compactification of the torus fibration associated to any integral affine manifold $B$ with simple singularities.

In the spirit of Conjecture~\ref{conj:KN_spaces}, given a toric log Calabi--Yau space $X_0(P,D)$, we may form the associated \emph{Kato--Nakayama} space $X^{\KN}$, which is known to be a topological model of the general fibre of the Gross--Siebert smoothing. We can attempt to compare this space with the topological space $X$. Progress in this direction has been recently made by Arg\"uz--Siebert in \cite{AS16}.

\subsection{Topological semi-stable torus fibrations}

Fix an integral affine manifold $B$ with simple singularities along $\Delta$. There is a natural torus fibration $T^\star B_0/\breve{\Lambda} \to B_0$, where $\breve{\Lambda}$ is the lattice of integral covectors. We compactify this torus fibration using local models defined by Gross in \cite{G01Ex,G01}, and similar work of Ruan~\cite{R07} (where such fibrations are called type I, II, and III). Summaries of these compactifications can also be found in \cite[Chapter~$6$]{DBranes09} and \cite[\S$2$]{CBM09}. We give the statement of a theorem summarising the output of this compactification and give a brief topological description of each fibre.

\begin{thm}[{\!\!\cite[Theorem~$2.1$]{G01}}]
	\label{thm:compactification}
	Let $B$ be a $3$-manifold and let $B_0 \subset B$ be a dense open set such
	that $\Delta := B \setminus B_0$ is a trivalent graph. Assume that the set of vertices of $\Delta$ are partitioned into sets $\Delta_+$ of \emph{positive} and $\Delta_-$ of \emph{negative} vertices. Suppose there is a $T^3$ bundle $f_0 \colon X(B_0) \to B_0$ such that the local monodromy of $f_0$ is generated, in a suitable basis, by:
	\begin{enumerate}
		\item The matrix $T_g$, as defined in \S\ref{sec:smoothing_polytope}, when $x \in \Delta$ is not a trivalent point.
		\item The matrices $T_i$ for $i \in \{1,2,3\}$ appearing in \eqref{eq:positive_monodromy} when $x \in \Delta_+$.
		\item The inverse transposes of the matrices $T_i$, for $i \in \{1,2,3\}$, when $x \in \Delta_-$.
	\end{enumerate}
	There is a $T^3$ fibration $f \colon X \to B$ and a commutative diagram
	\[
	\xymatrix{
	X(B_0) \ar[d] \ar@{^{(}->}[r] & X \ar[d] \\
	B_0 \ar@{^{(}->}[r] & B.	
	}
	\]
	Over connected components of $\Delta \setminus (\Delta_+ \cup \Delta_-)$, $(X, f, B)$ is conjugate to the generic singular fibration, over points of $\Delta_+$ it is conjugate to the positive fibration and over points of $\Delta_-$ to the negative fibration.
\end{thm}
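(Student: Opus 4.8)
The plan is to build $X$ by gluing explicit local models indexed by the strata of $B$ determined by the combinatorial type of points of $\Delta$, and then to verify that these models are compatible with the given bundle $X(B_0)$ on the overlaps. The relevant strata are the smooth locus $B_0$, over which the fibration $f_0 \colon X(B_0) \to B_0$ is already prescribed; the open edges $\Delta \setminus (\Delta_+ \cup \Delta_-)$; and the trivalent vertices $\Delta_+ \cup \Delta_-$. Over each stratum I would fix a neighbourhood small enough that $T^\star B_0/\breve{\Lambda}$ trivialises away from $\Delta$, construct a compactified $T^3$-fibration there realising the prescribed monodromy, and arrange that these local total spaces agree along collar neighbourhoods.

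First I would treat a point $x$ in the interior of an edge. Here the monodromy is conjugate to the single transvection $T_g$, whose invariant subspace is two-dimensional, and I would choose local coordinates splitting off the rank-one direction in which a cycle degenerates. The local model is then a product: a $T^2$-factor (the monodromy-invariant directions) times the standard local model of an $I_1$ nodal degeneration of an elliptic fibration in the remaining fibre circle and the transverse disc. The singular fibre over $x$ is thus $T^2$ times a pinched torus, which is exactly the generic singular fibration of Gross.

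The substantive construction is over the trivalent vertices. At $x \in \Delta_+$ the three incident edges carry monodromies simultaneously congruent to $T_1, T_2, T_3$, and one checks directly that $T_1 T_2 T_3 = \Id$, as required for the monodromy around a loop encircling all three edges to be trivial. The local model here is genuinely three-dimensional and is not a product; following Gross I would construct it as a gluing of toric pieces so that the fibre over $x$ acquires the prescribed topology of the positive fibration. Over $x \in \Delta_-$ one repeats the construction for the dual torus fibration $TB_0/\Lambda$, replacing each $T_i$ by its inverse transpose, which yields the negative fibration.

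The main obstacle, and the technical heart of the argument, is the compatibility of the vertex models with $X(B_0)$. Each vertex model restricts, on the boundary of a small ball around the vertex, to a $T^3$-fibration over a two-sphere with the three incident edges removed---a thickened pair of pants---and one must verify that this boundary fibration agrees, as a bundle with the prescribed monodromy representation, with the restriction of $X(B_0)$ to a collar around $x$. This reduces to matching the two monodromy representations of the punctured-sphere fundamental group, and it is precisely the simultaneous congruence permitted by hypotheses (2) and (3) that makes this matching possible. Once the local models are shown to patch to a manifold, the commutative diagram and the identification of the fibres over each stratum follow immediately from the construction.
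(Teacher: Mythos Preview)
The paper does not prove this theorem; it is quoted verbatim from \cite[Theorem~2.1]{G01} and used as a black box. There is thus no proof in the paper to compare against. Your overall strategy---build explicit local models over edges and over positive/negative vertices, then glue along collars by matching monodromy---is, in outline, the approach Gross takes in the cited reference.

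That said, your description of the generic (edge) model is wrong on dimensions. The transvection $T_g$ does have a two-dimensional invariant subspace, but the vanishing cycle $e_1$ lies \emph{inside} that invariant plane, so you cannot split off the invariant $T^2$ as a trivial factor of the fibration. What splits off trivially is a single $S^1$ (the $e_2$ direction); the remaining $T^2$ (spanned by $e_1$ and $e_3$) undergoes the $I_1$ nodal degeneration over the transverse disc, with a further trivial interval factor along the edge. The singular fibre is therefore $S^1 \times (\text{pinched torus})$, a $3$-manifold---not ``$T^2$ times a pinched torus'', which would be four-dimensional. Compare Remark~\ref{rem:topology_of_fibres}(i).

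Your treatment of the vertex models is also too thin to count as a proof. ``Following Gross I would construct it as a gluing of toric pieces'' is exactly the content of the theorem being cited: the explicit construction of the positive and negative models, the verification that their boundary fibrations realise the prescribed monodromy on the thrice-punctured sphere, and the check that the glued space is a manifold are the substance of \cite{G01}. If you mean to give an independent argument rather than invoke the reference, this is where the work has to be done.
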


\begin{rem}
	\label{rem:topology_of_fibres}
	We describe the topology of each type of singular fibre of $f \colon X(B) \to B$.
	\begin{enumerate}
		\item Given a point $b \in \Delta$ which is not trivalent, the fibre of $f \colon X(B) \to B$ over $b$ is equal to the product of a pinched torus with $S^1$.
		\item Given a point in $b \in \Delta_+$, $f^{-1}(b)$ is homeomorphic to a $T^3$ in which a $T^2$ has been contracted. Equivalently, following \cite{R07}, $f^{-1}(b)$ is the suspension of $T^2$ with the two poles identified.
		\item Given a point $b \in \Delta_-$, and writing $T^2$ as a quotient of $[0,1]\times [0,1]$, $f^{-1}(b)$ admits a map to $T^2$, defining a circle bundle over $(0,1)\times (0,1)$ and a one-to-one map over boundary points.
	\end{enumerate}
\end{rem}

Following \cite{CBM09}, we call these \emph{topological semi-stable compactifications} of torus fibrations. The following well-known observation, while straightforward, is fundamental to topological calculations on $X$.
 
\begin{lem}
	\label{lem:euler_number}
	Given a topological semi-stable torus fibration $f \colon X(B) \to B$ the Euler characteristic of $X(B)$ is equal to the difference between the number of positive and negative vertices.
\end{lem}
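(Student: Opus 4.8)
The plan is to compute $\chi(X(B))$ by stratifying the base $B$ and exploiting the additivity and multiplicativity of the compactly-supported Euler characteristic $\chi_c$. Since $X(B)$ is compact we have $\chi(X(B)) = \chi_c(X(B))$. I would stratify $B$ into three locally closed pieces: the smooth locus $B_0 = B \setminus \Delta$; the union of the open edges of the trivalent graph $\Delta$ (that is, $\Delta$ with its vertices deleted); and the finite vertex set $V = \Delta_+ \sqcup \Delta_-$. Over the interior of each stratum the map $f$ restricts to a fibre bundle with fibre constant up to homeomorphism, by Theorem~\ref{thm:compactification}, so the standard identity
\[
\chi_c(X(B)) = \chi_c(B_0)\,\chi(T^3) + \sum_{\text{edges } e} \chi_c(e)\,\chi(F_{\mathrm{gen}}) + \sum_{v \in V} \chi(F_v)
\]
holds, where $F_{\mathrm{gen}}$ denotes the generic singular fibre and $F_v$ the fibre over a vertex $v$.

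The next step is to evaluate each of these fibre Euler characteristics from the descriptions collected in Remark~\ref{rem:topology_of_fibres}. The generic fibre over $B_0$ is $T^3$, with $\chi(T^3)=0$, killing the first term. The generic singular fibre is a pinched torus times $S^1$; a pinched torus is homotopy equivalent to $S^2 \vee S^1$, hence has Euler characteristic $1$, and its product with $S^1$ has $\chi = 0$, killing all edge contributions. For a positive vertex $v \in \Delta_+$ the fibre is a $T^3$ with a $T^2$ contracted to a point, so $\chi(F_v) = \chi(T^3) - \chi(T^2) + 1 = 0 - 0 + 1 = 1$. For a negative vertex $v \in \Delta_-$ I would stratify the base torus $T^2 = [0,1]^2/\!\!\sim$ of the map in Remark~\ref{rem:topology_of_fibres}(iii) into its open $2$-cell and its $1$-skeleton: over the open square $f$ is an $S^1$-bundle, contributing $\chi_c\big((0,1)^2\big)\,\chi(S^1) = 0$, while over the $1$-skeleton (a wedge of two circles with a vertex, of Euler characteristic $-1$) the map is one-to-one, giving $\chi(F_v) = -1$.

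Assembling these values yields
\[
\chi(X(B)) = 0 + 0 + \sum_{v\in\Delta_+} 1 + \sum_{v\in\Delta_-}(-1) = \#\Delta_+ - \#\Delta_-,
\]
as claimed. I expect the main obstacle to be bookkeeping rather than conceptual. Two points require care: first, justifying the additivity--multiplicativity identity for a map that is only a fibre bundle over the open strata (the cleanest route is to invoke the standard additivity of $\chi_c$ over locally closed decompositions, or equivalently to fix a triangulation of $B$ adapted to $\Delta$ over whose open cells $f$ is a bundle); and second, correctly reading off the homeomorphism types of the three singular fibres and their Euler numbers from the local models. Both the vanishing of the $B_0$ and edge contributions and the $\pm 1$ values at the vertices are stable under the choice of orientation conventions, so the only genuine care needed is matching the sign conventions for positive versus negative vertices with those of Theorem~\ref{thm:compactification}.
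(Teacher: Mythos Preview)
Your proposal is correct and follows essentially the same approach as the paper: both arguments stratify the base according to fibre type and use that any fibre containing an $S^1$ factor contributes zero Euler characteristic, leaving only the $\pm 1$ contributions from positive and negative vertices. The paper's proof is a one-liner invoking exactly this observation via Remark~\ref{rem:topology_of_fibres}, while you spell out the additivity of $\chi_c$ and the individual fibre computations explicitly; the content is the same.
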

\begin{proof}
	It follows from the topological descriptions of the fibres given in Remark~\ref{rem:topology_of_fibres} that the only fibres which do not contain a circle factor are the fibres over positive and negative vertices; these fibres have Euler characteristic $+1$ and $-1$ respectively.
\end{proof}

We now compute a number topological invariants of $X(B)$. To this end we recall some additional results from \cite[\S$2$]{G01}.

\begin{pro}[{\!\!\cite[Proposition~$2.13$]{G01}}]
	Given $f \colon X(B) \to B$ as in Theorem~\ref{thm:compactification} and $B$ homeomorphic to $S^3$, then the second Steifel--Whitney class of $X$ vanishes. That is, $X(B)$ is a \emph{spin} $6$-manifold.
\end{pro}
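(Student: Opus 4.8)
The plan is to show directly that $w_2(X(B))=0$. Since the monodromy of $f_0$ lies in $\mathrm{SL}(3,\ZZ)$, the vertical bundle and base are both orientable, so $X(B)$ is an oriented closed $6$-manifold, and for such a manifold being spin is equivalent to the vanishing of $w_2$. The first step is to control the tangent bundle over the smooth locus $X_0:=f^{-1}(B_0)$. Writing $\pi\colon X_0\to B_0$ for the restricted fibration, the fibres are the tori $T^\star_b B_0/\breve{\Lambda}$, so the vertical tangent bundle is canonically $\pi^\star T^\star B_0$ and hence
\[
TX_0\;\cong\;\pi^\star\big(TB_0\oplus T^\star B_0\big).
\]
Because $B\cong S^3$, the open set $B_0$ is an orientable $3$-manifold and therefore parallelisable; thus $TB_0\oplus T^\star B_0$ is trivial and $TX_0$ is a trivial bundle. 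In particular $w_2(X(B))|_{X_0}=0$, and $X_0$ even carries a framing, hence a preferred spin structure $s_0$.

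Next I would localise the obstruction. As $w_2(X(B))$ restricts to zero on $X_0$, it lies in the kernel of the map $H^2(X(B);\ZZ/2)\to H^2(X_0;\ZZ/2)$. The complement $X(B)\setminus X_0=f^{-1}(\Delta)$ is stratified by the preimages of the open edges $\Delta_i$ of the trivalent graph $\Delta$, each $\Sigma_i:=\overline{f^{-1}(\Delta_i)}$ being a $4$-dimensional stratum of real codimension $2$, together with the vertex fibres $f^{-1}(\Delta_\pm)$, which are only $3$-dimensional and so have codimension $3$. By the Thom--Gysin sequence the vertex fibres contribute nothing in degree $2$, and the kernel is spanned by the Poincar\'e duals $[\Sigma_i]$. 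Hence $w_2(X(B))=\sum_i \lambda_i[\Sigma_i]$ for some $\lambda_i\in\ZZ/2$, and it remains to show that every $\lambda_i$ vanishes.

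Equivalently, $\lambda_i$ is precisely the obstruction to extending the framing $s_0$ across the codimension-$2$ stratum $\Sigma_i$, and by the Thom isomorphism $H^2_{\Sigma_i}(X(B);\ZZ/2)\cong H^0(\Sigma_i;\ZZ/2)$ this is a local invariant computed in a transverse slice. Over a small $2$-disk meeting $\Delta_i$ once the fibration is, by Theorem~\ref{thm:compactification}, conjugate to the generic (type~I) model with monodromy $T_g$; concretely it is the product of a nodal ($I_1$) degeneration of the $2$-torus in the monodromy-invariant directions with a trivial circle and an interval along $\Delta_i$. A neighbourhood of a nodal elliptic fibre is spin, and taking the product with $S^1$ and an interval preserves this, so the local obstruction $\lambda_i$ vanishes. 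I expect the main difficulty to be exactly this local analysis --- verifying that each of Gross's local models is spin and that the preferred structure $s_0$ on the boundary torus bundle extends across the singular fibre --- together with the relatedly subtle point of checking that the three strata $\Sigma_i$ meeting at a trivalent vertex impose no further condition, which should follow because the vertex fibres are of codimension $3$. Granting the vanishing of each $\lambda_i$, we conclude that $w_2(X(B))=0$ and therefore that $X(B)$ is spin.
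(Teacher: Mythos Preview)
The paper does not give a proof of this proposition: it is quoted verbatim from \cite[Proposition~2.13]{G01} and used as a black box. So there is no ``paper's own proof'' to compare against; your task was really to reconstruct Gross's argument.

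Your outline is in the right spirit, and the reduction step is sound: over $B_0$ the identification $TX_0\cong\pi^\star(TB_0\oplus T^\star B_0)$ together with parallelisability of the orientable $3$-manifold $B_0$ does give a framing of $X_0$, and hence $w_2|_{X_0}=0$. Localising the residual obstruction to a neighbourhood of $f^{-1}(\Delta)$ and observing that the codimension-$3$ vertex fibres cannot support a degree-$2$ class is also correct in principle, though note that neither the edge strata $\Sigma_i$ nor the vertex fibres are submanifolds, so invoking ``Thom--Gysin'' is shorthand for a more careful excision/relative-cohomology argument.

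The genuine gap is that you have not actually carried out the local computation, and you say as much (``Granting the vanishing of each $\lambda_i$\ldots''). That computation is the entire content of the result. Two specific cautions: first, what must extend is the \emph{spin structure}, not the framing---a framing of a tubular boundary can fail to extend even when the induced spin structure does, so your phrasing ``obstruction to extending the framing $s_0$'' is imprecise. Second, saying ``a neighbourhood of a nodal elliptic fibre is spin'' is not enough: you must check that the spin structure on the boundary $T^2$-bundle induced by your chosen framing of $X_0$ is the one that bounds. This amounts to identifying which of the four spin structures on $T^2$ the vanishing cycle carries and checking it is the bounding one, then verifying compatibility with the $S^1$-factor. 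None of this is hard, but it is exactly what Gross does in \cite{G01}, and without it the argument is only a plan.
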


We will make considerable use of the critical locus $\Crit(f)$ which, following \cite[Definition~$2.14$]{G01} is a union of (real) surfaces meeting in finite sets of points. Over non-trivalent points of $\Delta$, the critical locus $\Crit(f)$ restricts to a circle. This circle degenerates to a point over a positive vertex; and degenerates to the wedge union of two circles over a negative vertex.

\begin{pro}[{\!\!\cite[Proposition~$2.17$($2$)]{G01}}]
	Given $f \colon X(B) \to B$ as in Theorem~\ref{thm:compactification} and $B$ homeomorphic to $S^3$, then the first Pontryagin class $p_1(X(B)) = -2\Crit(f) \in H^4(X(B),\QQ)$.
\end{pro}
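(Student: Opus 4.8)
The plan is to compute $p_1(X(B))$ by using the torus fibration to show that a suitably chosen Pontryagin form vanishes identically away from the discriminant locus, and then to fix the resulting multiple of $[\Crit(f)]$ by a single local model computation over a generic point of $\Delta$.

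First I would analyse the tangent bundle over the smooth locus. Over $X(B_0)$ the fibration is the quotient $T^\star B_0/\breve{\Lambda}\to B_0$, so the vertical tangent bundle is canonically $f^\star T^\star B_0$, and a choice of horizontal distribution gives a real isomorphism
\[
TX(B)|_{X(B_0)} \cong f^\star\bigl(TB_0 \oplus T^\star B_0\bigr).
\]
Both summands carry the flat torsion-free connection of the integral affine structure on $B_0$ (its structure group being the integral affine monodromy), so $f^\star(TB_0\oplus T^\star B_0)$ is a flat bundle and its rational Pontryagin classes vanish by Chern--Weil. Hence $p_1(X(B))$ restricts to $0$ in $H^4(X(B_0),\QQ)$, so it lies in the image of $H^4(X(B),X(B_0);\QQ)\to H^4(X(B);\QQ)$, i.e.\ it is Poincar\'e dual to a $2$-cycle supported over $\Delta$. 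Since $\Crit(f)$ is exactly the surface over $\Delta$ where the fibration degenerates, I would conclude $p_1(X(B))=\lambda\,[\Crit(f)]$ for a universal constant $\lambda\in\QQ$.

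It then remains to compute $\lambda$ near a generic point of $\Crit(f)$, lying over a non-trivalent point of $\Delta$ where the monodromy is $T_g$ (Theorem~\ref{thm:compactification}). The invariant lattice of $T_g$ is two-dimensional, so locally the fibration is a product $\RR\times S^1\times W$, where $\RR$ runs along $\Delta$, the $S^1$ is the monodromy-invariant circle factor, and $W\to D^2$ is the standard real $4$-dimensional model of a nodal degeneration of a $T^2$ with monodromy the single Dehn twist $\left(\begin{smallmatrix}1&1\\0&1\end{smallmatrix}\right)$. The two trivial factors contribute nothing to $p_1$, so $\lambda$ is the local Pontryagin contribution of the node of $W$; this invariant is additive over nodes and hence pinned down by any global model. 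Taking an elliptic K$3$ surface, which realises $24$ such nodal fibres and satisfies $\langle p_1,[K3]\rangle = c_1^2-2c_2 = -2\chi = -48$, gives $\lambda = -48/24 = -2$, with the sign fixed by the orientation conventions on the fibres.

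The hard part will be making the middle step rigorous: identifying $p_1$ with an honest multiple of the (piecewise-smoothly embedded, and singular) surface $\Crit(f)$, and verifying that the positive and negative trivalent vertices contribute no additional concentration. Because the critical circle degenerates there only to a point or to a wedge of two circles, these vertices are isolated in $\Crit(f)$ and do not alter its fundamental class; what must be checked is that a global connection interpolating between the flat smooth-locus connection and the local product models produces a Pontryagin form whose integral is uniformly $-2$ across the whole critical surface, with no extra delta-contributions at the vertices.
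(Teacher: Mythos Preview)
The paper does not prove this proposition; it is quoted from \cite{G01} (Proposition~2.17(2)) without argument, so there is no in-paper proof to compare against.

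Your strategy is sound in outline, and the K3 computation of $\lambda=-2$ is correct. One point worth tightening: the step ``since $\Crit(f)$ is exactly the surface over $\Delta$ \ldots\ I would conclude $p_1=\lambda[\Crit(f)]$'' is too quick as written, because $H^4(X(B),X(B_0);\QQ)$ need not be spanned by the dual of $[\Crit(f)]$. What actually pins down the class is already contained in your local model: $p_1$ of $\RR\times S^1\times W$ is pulled back from the $4$-manifold $W$, and $H^4(W,W_0;\QQ)\cong H^4(W,\partial W;\QQ)\cong H_0(W;\QQ)\cong\QQ$ by Poincar\'e--Lefschetz, generated by the dual of the node. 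So over each generic edge of $\Delta$ the lifted class is forced to be a multiple of the local piece of $[\Crit(f)]$ with no ambiguity---the local model fixes the \emph{class} as well as the coefficient. You should make this explicit rather than leaving it as an inference from ``$\Crit(f)$ is the only surface there''.

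The gap you flag at the trivalent vertices is real and is where the content lies. Having shown that $p_1+2[\Crit(f)]$ vanishes on the complement of the preimages of finitely many points of $B$, you must still argue that no extra contribution survives from $H^4(X(B),X(B)\setminus f^{-1}(v);\QQ)\cong H_2(f^{-1}(v);\QQ)$ for each trivalent vertex $v$; these groups are nonzero for both the positive and negative fibres. This requires an explicit analysis of the two local vertex models. Your Chern--Weil suggestion is one route, but it demands a carefully constructed connection interpolating between the flat regime and the vertex models, and you would still need to verify by hand that the resulting form integrates to the same $-2$ across each sheet of $\Crit(f)$ through a vertex.
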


We also recall the following central result of \cite{G01}, which -- following the remarks made in the proof of Theorem~\ref{thm:main_cons} -- relates the space of log structures with the cohomology of $X$.

\begin{pro}[{see \cite[Lemma~$2.4$]{G01I}}]
	\label{pro:leray_degenerates}
	If $B$ is positive, simple, and simply connected the Leray spectral sequence for $f \colon X(B) \to B$ degenerates at the $E_2$ page. Consequently we have that 
	\begin{align*}
	b_2(X(B)) &= h^1(B,\iota_\star\Lambda \otimes_\ZZ \QQ)\\
	b_3(X(B)) &= 2h^1(B,\iota_\star\breve{\Lambda} \otimes_\ZZ \QQ)+2,
	\end{align*}
	where $\iota$ denotes the inclusion of $B_0$ into $B$.
\end{pro}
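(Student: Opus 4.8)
The plan is to run the Leray spectral sequence $E_2^{p,q} = H^p(B, R^q f_\star \QQ) \Rightarrow H^{p+q}(X(B),\QQ)$ and to show that it collapses at $E_2$. The first task is to identify the higher direct image sheaves. Over the smooth locus $B_0$ the fibration is the torus bundle $T^\star B_0/\breve{\Lambda}$, so $R^q f_\star\QQ$ restricts to the fibrewise cohomology $\wedge^q(\Lambda\otimes_\ZZ\QQ)$ on $B_0$; I would show that globally $R^q f_\star \QQ \cong \iota_\star\bigl(\wedge^q\Lambda\otimes_\ZZ\QQ\bigr)$. Since the fibres are oriented $3$-tori one has $\wedge^2\Lambda\cong\breve{\Lambda}$ and $\wedge^3\Lambda\cong\QQ$, so the four nonzero rows are governed by $\QQ$, $\iota_\star\Lambda\otimes\QQ$, $\iota_\star\breve{\Lambda}\otimes\QQ$, and $\QQ$. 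To establish the identification I would compute each stalk $(R^q f_\star\QQ)_b = H^q(f^{-1}(b))$ for $b\in\Delta$ from the explicit fibre topology recorded in Remark~\ref{rem:topology_of_fibres}, and check that it agrees with the local monodromy invariants of $\wedge^q\Lambda\otimes\QQ$ (a topological local invariant cycle property for these semi-stable compactifications). This is precisely where simplicity and positivity of $B$ enter, and it is the main technical obstacle of the proof.

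With the $E_2$ page in hand I would exploit the hypotheses on $B$ to eliminate most entries. As $B$ is simply connected we have $H^0(B,\QQ)=H^3(B,\QQ)=\QQ$ and $H^1(B,\QQ)=H^2(B,\QQ)=0$. Positivity forces the global monodromy invariants to vanish, so $H^0(B,\iota_\star\Lambda\otimes\QQ)=H^0(B,\iota_\star\breve{\Lambda}\otimes\QQ)=0$, and dually $H^3(B,\iota_\star\Lambda\otimes\QQ)=H^3(B,\iota_\star\breve{\Lambda}\otimes\QQ)=0$ by Poincar\'e duality. The surviving $E_2$-entries then lie in columns $p=1,2$ on the rows $q=1,2$, together with the four corners $(0,0),(3,0),(0,3),(3,3)$. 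A direct inspection shows that every $d_3$ (and higher) has either zero source or zero target, and that the only potentially nonzero second differentials are
\[
d_2\colon E_2^{1,1}=H^1(B,\iota_\star\Lambda\otimes\QQ)\to E_2^{3,0}=H^3(B,\QQ), \qquad d_2\colon E_2^{0,3}=H^0(B,\QQ)\to E_2^{2,2}=H^2(B,\iota_\star\breve{\Lambda}\otimes\QQ).
\]

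To kill these I would first use that $f$ admits a section, namely the zero section of the torus bundle extended over Gross' local compactification models. A section makes $f^\star\colon H^\bullet(B,\QQ)\to H^\bullet(X(B),\QQ)$ injective, which forces the bottom row $E_\bullet^{\bullet,0}$ to survive to $E_\infty$; in particular every differential landing in the bottom row vanishes, so the first $d_2$ above is zero. For the second I would invoke the self-duality of the Leray spectral sequence coming from Poincar\'e--Verdier duality: as $X(B)$ is a closed oriented $6$-manifold fibred in $3$-tori over the closed oriented $3$-manifold $B$, one has $E_2^{p,q}\cong \bigl(E_2^{3-p,3-q}\bigr)^\vee$ with $d_2$ carried to the transpose of the oppositely directed differential. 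Under this pairing $d_2^{0,3}$ is dual to $d_2^{1,1}$ and hence also vanishes. Therefore $E_2=E_\infty$.

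Finally I would read off the Betti numbers. Degeneration gives $b_2(X(B)) = \dim E_2^{1,1} = h^1(B,\iota_\star\Lambda\otimes\QQ)$, the remaining entries on the antidiagonal $p+q=2$ being zero. For $b_3$ the antidiagonal $p+q=3$ contributes $E_2^{3,0}\oplus E_2^{2,1}\oplus E_2^{1,2}\oplus E_2^{0,3}$, of dimension $1 + h^2(B,\iota_\star\Lambda\otimes\QQ) + h^1(B,\iota_\star\breve{\Lambda}\otimes\QQ) + 1$. Poincar\'e duality on $B$ with the mutually dual local systems $\iota_\star\Lambda\otimes\QQ$ and $\iota_\star\breve{\Lambda}\otimes\QQ$ yields $h^2(B,\iota_\star\Lambda\otimes\QQ)=h^1(B,\iota_\star\breve{\Lambda}\otimes\QQ)$, giving $b_3(X(B)) = 2h^1(B,\iota_\star\breve{\Lambda}\otimes\QQ)+2$ as claimed. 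The hard part is the sheaf identification of the first step; the degeneration and the closing bookkeeping are then formal consequences of the section and of Poincar\'e duality.
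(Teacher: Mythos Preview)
Your outline is close to Gross's original argument behind \cite[Lemma~$2.4$]{G01I}, and the use of the section to kill the differential into $E_2^{3,0}$ is exactly right (the paper reuses that same trick later in Lemma~\ref{lem:vanishing_diff}). However, the paper does not reprove this spectral-sequence analysis; it instead reduces the proposition to Gross's lemma by verifying its hypotheses, namely that both $X(B)$ and its \emph{permissible dual} fibration $\breve{X}(B)$ are simply connected. Those two facts are established separately (Proposition~\ref{pro:simply_conn} and its adaptation to $\breve{f}$), and they are what drives the vanishing you use.

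There is a genuine gap at the step ``positivity forces the global monodromy invariants to vanish''. Positivity in the Gross--Siebert sense is a local condition on the discriminant and does not by itself imply $H^0(B,\iota_\star\Lambda\otimes\QQ)=0$ or $H^0(B,\iota_\star\breve{\Lambda}\otimes\QQ)=0$. These vanishings are equivalent (for $B$ simply connected) to the simple-connectedness of $X(B)$ and $\breve{X}(B)$ respectively, and that requires a global monodromy argument of the kind carried out in Proposition~\ref{pro:simply_conn}: one exhibits enough loops in $B_0$ whose monodromy invariant planes intersect trivially. Without this input the $E_2^{0,1}$ and $E_2^{0,2}$ terms need not vanish and your table is incorrect.

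Your duality steps also need care. The sheaves $\iota_\star\Lambda\otimes\QQ$ and $\iota_\star\breve{\Lambda}\otimes\QQ$ are constructible, not local systems, so ordinary Poincar\'e duality on $B$ does not apply. Verdier duality gives $\mathbb{D}(\iota_\star\Lambda)\simeq \iota_{!}\breve{\Lambda}[3]$, pairing $H^p(B,\iota_\star\Lambda)$ with $H^{3-p}_c(B_0,\breve{\Lambda})$, which is not a priori $H^{3-p}(B,\iota_\star\breve{\Lambda})$. Likewise, the self-duality $E_2^{p,q}\cong (E_2^{3-p,3-q})^\vee$ for the Leray spectral sequence of $f$ is not formal Poincar\'e duality on $X(B)$; on a singular fibration the Leray filtration need not be self-dual. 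Gross (and the paper, by citing him) obtains the needed symmetry concretely via the dual fibration $\breve{f}\colon \breve{X}(B)\to B$: simplicity gives $R^q\breve{f}_\star\QQ\cong R^{3-q}f_\star\QQ$, and simple-connectedness of $\breve{X}(B)$ then yields the $H^0$ vanishing on the $q=2$ row and the identity $h^2(B,\iota_\star\Lambda\otimes\QQ)=h^1(B,\iota_\star\breve{\Lambda}\otimes\QQ)$ you use at the end. If you want to stay with your direct approach, you should replace the appeal to ``positivity'' by the monodromy computation of Proposition~\ref{pro:simply_conn}, and replace the two abstract duality invocations by the dual-fibration argument.
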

\begin{proof}
	Note that, as $B$ is simple $\iota_\star \Lambda = R^1f_\star\ZZ$ and $\iota_\star \breve{\Lambda} = R^2f_\star\ZZ$. The result now follows from \cite[Lemma~$2.4$]{G01I} once we know that $X(B)$ is simply connected, and admits a simply connected \emph{permissible dual} fibration \cite[Definition~$2.3$]{G01I} $\breve{f} \colon \breve{X}(B) \to B$. $X(B)$ is simply connected follows by Proposition~\ref{pro:simply_conn}. Since $B$ has simple singularities $f$ has a permissible dual $\breve{X}(B)$, locally exchanging positive and negative compactifications. Moreover, adapting the argument in Proposition~\ref{pro:simply_conn}, $\breve{X}(B)$ is simply connected.
\end{proof}

\subsection{The topology of $X(P,D)$}
\label{sec:easy_topology}

We now turn to an analysis of the topology of the fibration $f \colon X(B) \to B$ in the special case that $B$ is equal to $B(P,D)$ and $X$ is equal to $X(P,D)$ for some s.d.\ polytope $P$ and standard decomposition $D$.

\begin{pro}
	\label{pro:simply_conn}
	The topological space $X$ is simply connected.
\end{pro}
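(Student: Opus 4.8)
The plan is to compute $\pi_1(X)$ directly from the torus fibration $f\colon X\to B$, exploiting that the base $B=B(P,D)$ is supported on $\partial P^\circ$ and is therefore homeomorphic to $S^3$; in particular $B$ is simply connected and its discriminant locus $\Delta$ is a trivalent graph, of real codimension $2$ in $B$. Since $f$ restricts to a $T^3$-bundle $f_0\colon X(B_0)\to B_0$ over $B_0=B\setminus\Delta$, and since $f^{-1}(\Delta)$ has real codimension $\geq 2$ in the (topological) $6$-manifold $X$ (over a generic point of $\Delta$ the fibre is $3$-dimensional by Remark~\ref{rem:topology_of_fibres}, while $\Delta$ is $1$-dimensional), any loop in $X$ may be perturbed off $f^{-1}(\Delta)$. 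Hence the inclusion induces a surjection $\pi_1(X(B_0))\twoheadrightarrow\pi_1(X)$, and the homotopy exact sequence of $f_0$,
\[
\pi_1(T^3)\xrightarrow{\ i_\star\ }\pi_1(X(B_0))\to\pi_1(B_0)\to 1,
\]
shows that $\pi_1(X)$ is generated by the image of the fibre classes $i_\star(\pi_1(T^3))\cong\ZZ^3$ together with the images of a set of meridian loops generating $\pi_1(B_0)=\pi_1(S^3\setminus\Delta)$, one for each edge of $\Delta$.

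Next I would kill the meridians. Over a tubular neighbourhood of a generic point of an edge of $\Delta$ the fibration is, by Theorem~\ref{thm:compactification}, the generic singular fibration, which locally is the product of an $S^1$-factor with a nodal ($I_1$) degeneration over a transverse disc $D^2$. Describing the latter by the attachment of a vanishing thimble to $T^2\times D^2$, one sees that in the local total space both the vanishing cycle and the boundary loop $\partial D^2$ become trivial modulo the fibre classes. As $\partial D^2$ is precisely a meridian of the edge, every meridian maps into $i_\star(\ZZ^3)$ in $\pi_1(X)$. Consequently $\pi_1(X)$ is generated by the single abelian subgroup $i_\star(\ZZ^3)$, hence is itself abelian, and equals the quotient $\ZZ^3/N$, where $N\subseteq H_1(T^3)=\ZZ^3$ is the subgroup generated by the vanishing cycles of the edges of $\Delta$, each transported to a fixed base fibre.

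It then remains to prove that the vanishing cycles span $H_1(T^3)$, i.e.\ that $N=\ZZ^3$, and this is where I expect the real work to lie. The vanishing cycle along an edge is the image of $T-I$ for the associated monodromy transvection, so in the basis of \eqref{eq:positive_monodromy} the three edges meeting a positive vertex share a single vanishing direction; the inverse-transpose matrices do the same at negative vertices. The point is that as one travels along $\Delta$ inside a face $\sigma\in\Faces(P^\circ,2)$ and then across an edge of $P^\circ$ -- where the affine monodromy computed in \S\ref{sec:smoothing_polytope} conjugates the reference basis by the transvection $T_\gamma$ with off-diagonal entry $\ell(\sigma^\star)\ell(\tau)$ -- these directions rotate and accumulate. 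I would argue that within the faces the vanishing directions already fill out a rank-two sublattice of $H_1(T^3)$, while crossing an edge of $P^\circ$ supplies the remaining transverse generator; connectedness of $\Delta$ over the simply connected base $S^3$ then propagates a spanning set to every fibre, and a primitivity check on the relevant lattice vectors upgrades rational spanning to integral spanning.

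The main obstacle is thus the global bookkeeping of the monodromy across the whole discriminant graph, using the explicit transvections of \S\ref{sec:smoothing_polytope} rather than only their local normal forms. Equivalently, this step amounts to showing that the global affine monodromy representation on $H_1(T^3)$ has no nonzero invariants and unimodular coinvariant lattice, which is also consistent with (and, rationally, recovers) the vanishing $b_1(X)=h^0(B,\iota_\star\Lambda\otimes_\ZZ\QQ)=0$ entering Proposition~\ref{pro:leray_degenerates}. Once the vanishing cycles are shown to generate $\ZZ^3$, we conclude $\pi_1(X)=\ZZ^3/N=0$.
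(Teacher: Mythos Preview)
Your overall plan is sound and in the same spirit as the paper's proof: both reduce simple connectedness of $X$ to a statement about the monodromy of the $T^3$-bundle over $B_0$ on a single fibre. The paper, however, short-circuits your first two paragraphs entirely by quoting \cite[Theorem~$2.12$]{G01}: since $B\cong S^3$ is simply connected, it suffices to show $H^0(B,R^1f_\star\ZZ_n)=0$ for every $n$, i.e.\ that the monodromy representation on $H^1(f^{-1}(b),\ZZ_n)\cong\Lambda_b\otimes\ZZ_n$ has no nonzero invariants. Your hands-on derivation of $\pi_1(X)\cong\ZZ^3/N$ via general position, the homotopy exact sequence, and killing meridians is the dual statement (coinvariants of $H_1$ rather than invariants of $H^1$); it is correct, but it reproves a special case of Gross' theorem rather than using it.

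The real divergence is in the ``key step''. You propose to track vanishing cycles globally across $\Delta$, accumulating rank inside faces and picking up the transverse direction when crossing edges of $P^\circ$, and then to ``upgrade rational spanning to integral spanning'' by a primitivity check. This is both more laborious and more fragile than what the paper does. The paper's argument is entirely local: choose the basepoint $b$ to be a vertex of $P^\circ$ and pick three faces $\sigma_1,\sigma_2,\sigma_3\in\Faces(P^\circ,2)$ through $b$ that do not all share a common edge. For a segment of $\Delta$ lying in $\sigma_i$, the monodromy-invariant plane in $\Lambda_b$ is exactly $T_b\sigma_i$, and the intersection $\bigcap_i T_b\sigma_i$ is already zero; hence the global invariant subspace vanishes. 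This avoids any global bookkeeping and any explicit transport of bases across the affine structure.

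Where your proposal is genuinely weak is the integrality. Saying that three primitive vanishing cycles are $\QQ$-independent does not give $N=\ZZ^3$; one needs the stronger statement for every $\ZZ_n$. The paper builds this in from the start by working with $\ZZ_n$ coefficients and using that $H^1(f^{-1}(b),\ZZ_n)=H^1(f^{-1}(b),\ZZ)\otimes\ZZ_n$, so the same intersection-of-tangent-planes argument runs mod $n$. Your ``primitivity check'' would have to establish the equivalent fact, namely that the conormals to suitable $\sigma_i$ form an integral basis of $\breve\Lambda_b$, and you have not indicated how you would do this; this is the only substantive gap in your plan.
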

\begin{proof}
	Following \cite[Theorem~$2.12$]{G01}, since $B = B(P,D)$ is simply connected we only need to check that $H^0(B,R^1f_\star\ZZ_n) = 0$ for all $n$. Since the compactified torus fibration $f \colon X(P,D) \to B$ is $\ZZ_n$-simple, $H^0(B,R^1f_\star\ZZ_n) = H^0(B,\iota_\star R^1f_0\ZZ_n)$, where $f_0$ denotes the restriction of $f$ to the preimage of $B_0$. Fixing a basepoint $b \in B_0$, we may identify this group with the subspace of $H^1(f^{-1}(b),\ZZ_n) \cong \ZZ_n^3$ fixed under the monodromy operators induced by loops passing around any segment of $\Delta$.
	
	Note that there is a canonical isomorphism $H^1(f^{-1}(b),\ZZ) \cong \Lambda_b$. Hence, if $l \subset \Delta$ is a segment contained in a face $\sigma \in \Faces(P^\circ,2)$, the monodromy operator corresponding to a loop passing singly around $l$ fixes the integral points in the tangent space to $\sigma$ (transported to $b$). Let $b$ be a vertex of $P^\circ$, and let $\{\sigma_i : i \in \{1,2,3\}\}$ be three elements of $\Faces(P^\circ,2)$ containing $b$ which do not all share a common edge. The tangent spaces of these faces at $b$ are the invariant planes for loops around segments of $\Delta$. The intersection of these monodromy invariant subspaces is trivial and, noting that $H^1(f^{-1}(b),\ZZ_n) = H^1(f^{-1}(b),\ZZ) \otimes_\ZZ \ZZ_n$, the result follows.
\end{proof}

\begin{pro}
	\label{pro:euler_number}
	The topological Euler number of $X$ is given by the formula
	\[
	\chi(X) = \sum_{\tau \in \Edges(P^\circ)}{\big(\#\{m \in D(\tau^\star) : \dim(m) = 2\}\big)} - \sum_{\sigma \in \Faces(P^\circ,2)}{\big(\ell(\sigma^\star)^2\Vol(\sigma)\big)},
	\]
	where $\Vol(A)$ is the volume of the polygon $A$, normalised so that the volume of a standard simplex is equal to $1$.
\end{pro}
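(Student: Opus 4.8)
The plan is to reduce the entire computation to a count of the trivalent vertices of the discriminant locus $\Delta$ of $B = B(P,D)$, invoking Lemma~\ref{lem:euler_number}: the Euler number $\chi(X)$ equals $\#\Delta_+ - \#\Delta_-$, the number of positive vertices minus the number of negative vertices. By the simplicity conditions recorded in \S\ref{sec:smoothing_polytope} (see in particular \eqref{it:positive} and the condition following it), a trivalent vertex of $\Delta$ is positive precisely when it is contained in an edge $\tau \in \Edges(P^\circ)$ and negative precisely when it lies in the relative interior of a face $\sigma \in \Faces(P^\circ,2)$. Since by construction $\Delta$ meets neither the interiors of facets of $P^\circ$ nor the vertices of $P^\circ$, every vertex falls into exactly one of these two classes, and it suffices to enumerate each separately.

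For the negative vertices I would argue face by face. Over a fixed $\sigma \in \Faces(P^\circ,2)$ the fan-structure construction of \S\ref{sec:tropical_manifold} (case (1)) produces a discriminant locus governed by the triangulation $T(\bar{\sigma})$ of $\bar{\sigma} = \ell(\sigma^\star)\sigma$ into standard triangles. In the interior of $\sigma$ the graph $\Delta$ is trivalent with exactly one vertex in each triangle of $T(\bar{\sigma})$, the points lying over interior edges of the triangulation being bivalent and hence generic points of $\Delta$. Thus the negative vertices lying over $\sigma$ are in bijection with the triangles of $T(\bar{\sigma})$, of which there are $\Vol(\bar{\sigma}) = \ell(\sigma^\star)^2\Vol(\sigma)$ since every maximal cell is a standard triangle of normalised volume $1$. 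Summing over $\sigma$ yields the subtracted term.

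For the positive vertices I would argue edge by edge, using case (2) of the fan-structure construction. Along $\tau \in \Edges(P^\circ)$ the affine structure is controlled by the summands $m \in D(\tau^\star)$ through the piecewise-linear transition $x \mapsto \min_{w \in \V{m}}\langle x,w\rangle$. A one-dimensional summand has two vertices, so this transition has a single break and contributes only generic monodromy; a two-dimensional standard triangle has three vertices, so the transition acquires a genuinely trivalent break, producing exactly one positive vertex. Hence the positive vertices over $\tau$ biject with $\{m \in D(\tau^\star) : \dim(m) = 2\}$, and summing over $\tau$ gives the added term. Combining the two counts through Lemma~\ref{lem:euler_number} produces the stated formula.

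The main obstacle I anticipate is the rigorous verification of these two local bijections — that each two-dimensional summand contributes precisely one positive vertex (and each one-dimensional summand none), and that each standard triangle of $T(\bar{\sigma})$ contributes precisely one negative vertex — carried out after the \emph{minimal} discriminant locus has been formed by extending the affine structure across every branch with trivial $\Lambda$-monodromy (as in the proposition preceding Figure~\ref{fig:delta}). This requires care in reading off the trivalent structure and its signs directly from the explicit fan structures of \S\ref{sec:tropical_manifold}, and in checking that no vertices are lost, merged, or miscounted where adjacent faces are glued along their shared edges.
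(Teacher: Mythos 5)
Your proposal follows the paper's proof exactly: both reduce to Lemma~\ref{lem:euler_number}, count positive vertices as the trivalent points of $\Delta$ on edges of $P^\circ$ (in bijection with the $2$-dimensional summands in $D$), and count negative vertices as the trivalent points in the interiors of $2$-faces $\sigma$ (in bijection with the triangles of a maximal triangulation of $\ell(\sigma^\star)\sigma$, of which there are $\ell(\sigma^\star)^2\Vol(\sigma)$). The additional local verifications you flag are treated as immediate in the paper, which simply asserts the two bijections.
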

\begin{proof}
	By Lemma~\ref{lem:euler_number} $\chi(X)$ is equal to the difference between the number of positive and negative vertices in $B$. Positive vertices occur precisely where $3$ segments of $\Delta$ meet at a point contained in an edge of $P^\circ$. Such points are in bijection with triangles appearing in $D$. Negative vertices are trivalent points of $\Delta$ contained in the relative interior of a $2$-dimensional face of $P^\circ$. Fixing a face $\sigma \in \Faces(P^\circ,2)$, negative vertices in $\sigma$ are in bijection with the triangles in a maximal triangulation of $\ell(\sigma^\star)\sigma$. This is precisely $\Vol(\ell(\sigma^\star)\sigma) = \ell(\sigma^\star)^2\Vol(\sigma)$; from which the result follows.
\end{proof}

\subsection{Computing $b_2(X(P,D))$}
\label{sec:compute_b2}

We now turn to the computation of the second Betti number of $X = X(P,D)$; closely related to the value $\gamma(P,D)$, see Definition~\ref{dfn:gammaPD}.

\begin{thm}
	\label{thm:rank}
	The second Betti number of $X$ is equal to $\gamma(P,D) - 3$.
\end{thm}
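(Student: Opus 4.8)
The plan is to combine Proposition~\ref{pro:leray_degenerates} with a direct computation of the sheaf cohomology group $h^1(B,\iota_\star\Lambda\otimes_\ZZ\QQ)$. By Proposition~\ref{pro:leray_degenerates}, since $B = B(P,D)$ is positive, simple, and simply connected, we have $b_2(X) = h^1(B,\iota_\star\Lambda\otimes_\ZZ\QQ)$, so it suffices to show that this group has dimension $\gamma(P,D)-3$. Here the sheaf $\iota_\star\Lambda\otimes_\ZZ\QQ$ is locally constant of rank $3$ on $B_0$ and, over any open set meeting $\Delta$, its sections are exactly the tangent vectors invariant under the local monodromy recorded in \S\ref{sec:smoothing_polytope}.

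First I would set up a cochain complex computing this cohomology adapted to the polyhedral decomposition $\P$ and the discriminant graph $\Delta \subset B \cong S^3$. Taking a suitable good cover by the open stars of the vertices of $\P$ (equivalently, a Mayer--Vietoris decomposition along the strata of $\partial P^\circ$), the local sections of $\iota_\star\Lambda\otimes_\ZZ\QQ$ are governed by the monodromy-invariant subspaces of the operators $T_g$ and $T_1,T_2,T_3$ of \eqref{eq:positive_monodromy}. The key local input is that, around a segment of $\Delta$ lying in a face $\sigma \in \Faces(P^\circ,2)$, the invariant vectors are precisely the tangent plane to $\sigma$, and that at each trivalent vertex the three invariant planes intersect in a line (at a positive vertex), or behave dually (at a negative vertex); these are exactly the facts underlying the proof of Proposition~\ref{pro:simply_conn}.

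Next I would read off the degree-one part of this complex. A one-cocycle records, for each edge $\tau \in \Edges(P^\circ)$ and each summand $m \in D(\tau^\star)$, a single rational slope $V(m)$, and the cocycle condition imposed at each $2$-dimensional face $\sigma \in \Faces(P^\circ,2)$ is precisely the vanishing of the signed sum in Definition~\ref{dfn:consistent}. Thus the space of one-cocycles is canonically identified with the space of consistent slope functions, that is, with $\Gamma(P,D)$ of Definition~\ref{dfn:gammaPD}, giving $\dim Z^1 = \gamma(P,D)$. It then remains to compute the one-coboundaries. Since $H^0(B,\iota_\star\Lambda\otimes_\ZZ\QQ) = 0$ by the argument of Proposition~\ref{pro:simply_conn}, the coboundary map from degree zero is injective, and its image is the space of slope functions arising from globally linear data on $B$. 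I expect this space to be three-dimensional: it is the space of global affine-linear functions on the $3$-dimensional affine manifold $B$ modulo constants (equivalently, the image of the ambient linear functions on $M_\RR$ after quotienting by the distinguished radial direction fixed by the polarisation $\varphi^c$). Combining these, $b_2(X) = \dim Z^1 - \dim B^1 = \gamma(P,D)-3$.

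The main obstacle is the local-to-global bookkeeping in the second and third steps. One must verify that the chosen cover genuinely computes $h^1(B,\iota_\star\Lambda\otimes_\ZZ\QQ)$ (so that higher \v{C}ech contributions do not interfere), resolve the gluing across the positive and negative trivalent vertices of $\Delta$ so that the cocycle condition collapses to exactly the consistency identity of Definition~\ref{dfn:consistent}, and pin the coboundary space down to dimension exactly $3$ rather than $4$. This last point is the most delicate: the naive count of global linear functions on $M_\RR$ is four-dimensional, and the reduction to three relies on the reflexive (radial) structure. This is the step I would treat most carefully.
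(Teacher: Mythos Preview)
Your overall strategy---use Proposition~\ref{pro:leray_degenerates} to reduce to $h^1(B,\iota_\star\Lambda\otimes_\ZZ\QQ)$ and then compute that group combinatorially---is a reasonable alternative to the paper's route, which instead analyses the Leray spectral sequence for a contraction $\xi\colon X\to X'_0$ onto (a refinement of) the toric log Calabi--Yau space, and computes $h^1(X'_0,R^1\xi_\star\QQ)$ via auxiliary sheaves $\cF$ and $\cG$ (Proposition~\ref{pro:E2_page}, Lemmas~\ref{lem:vanishing_diff}, \ref{lem:describing_H1}, \ref{lem:cohomology_vanishes}, \ref{lem:global_sections}). So your approach is genuinely different in flavour.

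However, there is a real gap in the execution: you have misidentified $\Gamma(P,D)$. The space $\Gamma(P,D)$ of Definition~\ref{dfn:gammaPD} is \emph{not} the space of consistent slope functions of Definition~\ref{dfn:consistent}. A slope function assigns a rational number to each Minkowski summand $m\in D(\tau^\star)$, with one linear relation for each $\sigma\in\Faces(P^\circ,2)$. By contrast, an element of $\Gamma(P,D)$ is an element of the inverse limit indexed by edges of $P$ and summands $m$: it is determined (see the Remark after Definition~\ref{dfn:gammaPD}) by a value for each \emph{edge of $P$} (equivalently each $\sigma\in\Faces(P^\circ,2)$), with relations imposed by the summands $m$. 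These two objects live on different index sets with different constraint patterns; compare the worked $\PP^2\times\PP^2$ example following the proof of Theorem~\ref{thm:rank}, where variables are attached to edges of $P$, not to Minkowski factors. Your identification of $Z^1$ with consistent slope functions therefore does not match the definition of $\gamma(P,D)$, and the claimed equality $\dim Z^1=\gamma(P,D)$ is unsupported.

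Secondary to this, the coboundary count is not pinned down. In the paper's argument the $4$-dimensional space $M_\QQ$ appears (Lemma~\ref{lem:cohomology_vanishes}), and the final ``$-3$'' arises as $-4+1$, the $+1$ coming from the term $E_2^{2,0}\cong\QQ$ in Proposition~\ref{pro:E2_page}. In your setup there is no analogue of that $+1$, so you must genuinely show the coboundaries form a $3$-dimensional space; invoking a ``radial direction'' is not enough without a precise cochain model. If you want to carry out your approach, you would need to (i) write down an explicit acyclic cover or cellular resolution for $\iota_\star\Lambda\otimes_\ZZ\QQ$ on $B$, (ii) identify the degree-one cocycles with the inverse limit of Definition~\ref{dfn:gammaPD} (not with slope functions), and (iii) compute the image of $d^0$ exactly.
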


\begin{rem}
	Assuming that $X$ is homotopy equivalent to a Calabi--Yau threefold $X$, the Picard rank of $X$ is equal to $b_2(X)$. Indeed, it follows immediately from the exponential sequence for $X$ that the first Chern class $\Pic(X) \to H^2(X,\ZZ)$ is an isomorphism.
\end{rem}

The proof of Theorem~\ref{thm:rank} makes use of a contraction map $\bar{\xi} \colon X \to X_0$ (recalling that $X_0 = X_0(P,D)$ is the toric log Calabi--Yau space obtained from $(P,D)$). We define the map $\bar{\xi}$ locally; and show that the map $f$ factors as $\pi \circ \bar{\xi}$, where $\pi\colon X_0 \to B$ restricts to the moment map on each toric stratum of $X_0$.

\begin{cons}
	\label{cons:contraction}
	Given a point $b \in B_0$ such that the minimal stratum $\sigma$ of $\P$ containing $b$ has dimension $d$, the fibre $f^{-1}(b)$ is equal to $T^\star_b B/\breve{\Lambda}$. There is a canonical inclusion $T_b\sigma \rightarrow T_bB$, inducing a projection $T_b^\star B \rightarrow T^\star_b\sigma$. This projection descends to $f^{-1}(b)$, and maps $f^{-1}(b)$ to a possibly lower dimensional torus; the quotient of $T^\star_b\sigma$ by the restriction of $\breve{\Lambda}$. This determines a a map
	\[
	\bar{\xi}_0 \colon f^{-1}(B_0) \rightarrow X_0(B)
	\]
	which we now compactify over $\Delta$. Indeed, given a point $b' \in \Delta$, every vanishing cycle of the fibre $f^{-1}(b')$ is contained in the kernel of the projection  $T_b^\star B \rightarrow T^\star_b\sigma$, where $b$ is a general point of $B_0$ close to $b'$. Thus we can extend $\bar{\xi}_0$ over $\Delta$: this can be realized explicitly by defining torus actions on the fibres of $f$, following~\cite{G01}.
\end{cons}

We describe the possible fibres of $\bar{\xi}$ over points in $X_0$.
\begin{enumerate}
	\item If $x \in X_0$ and $x \notin \bar{\xi}(\Crit(f))$, then $\bar{\xi}^{-1}(x)$ is a torus of dimension $3-d$, where $d$ is the dimension of the minimal stratum of $\P$ containing $x$.
	\item If $x \in \bar{\xi}(\Crit(f))$ and $x \notin X_\tau$ for any edge $\tau$ of $\P$, then $\bar{\xi}^{-1}(x)$ is a point.
	\item If $x \in \bar{\xi}(\Crit(f))$ and $x \in X_\tau$ for some edge $\tau$ of $\P$, then $\bar{\xi}^{-1}(x)$ is a point if $\pi(x)$ is a positive vertex, while $\bar{\xi}^{-1}(x) \cong S^1$ if $\pi(x)$ is not a trivalent point of $\Delta$.
\end{enumerate}

We compute the cohomology of $X$ from the Leray spectral sequence. However, rather than directly applying the Leray spectral sequence associated to $\bar{\xi}$ we first compose it with a further contraction map.

\begin{cons}
	Let $\P'$ be a refinement of $\P$ to a simplicial complex such that the vertex set of $\P'$ is identical to the vertex set of $\P$. Such a refinement is uniquely determined by a choice of diagonal in each rectangular face of $\P$. Let $X'_0$ denote the corresponding (reducible) union of toric varieties, and let $\eta \colon X_0 \to X'_0$ be the corresponding contraction map. Outside of the strata of $X'_0$ which correspond to new faces of $\P'$, the map $\eta$ is a homeomorphism. Over a point  $x$ in the relative interior of a new stratum, $\eta^{-1}(x) \cong S^1$.	We let $\xi$ denote the composition $\eta \circ \bar{\xi}$. Let $C \cong S^2$ denote the restriction of $\bar{\xi}(\Crit(f))$ to $X_\sigma$, where $\sigma$ is a rectangular face of $\P$ subdivided in $\P'$ into faces $\sigma_1$ and $\sigma_2$. The image $\eta(C)$ is also a sphere, which intersects each of $X_{\sigma_1}$ and $X_{\sigma_2}$ in a disc. 
\end{cons}

Let $\tau$ be an edge of $\P'$ which is not an edge of $\P$, and let $\sigma_1$ and $\sigma_2$ be the new $2$-dimensional faces of $\P'$ containing $\tau$. If $x \notin \xi(\Crit(f))$ then -- as in our analysis of the map $\xi$ -- the fibres of $\xi^{-1}(x)$ are tori of dimension $(3-d)$, where $d$ is the dimension of the minimal stratum of $\P$ containing $x$. Letting 
\[
D_1 := \xi(\Crit(f)) \cap X_{\sigma_1},
\]
we have that $\xi^{-1}(x)$ is a pinched torus if $x$ i contained in $X_\tau \cap D_1$, while $\xi^{-1}(x)$ is a point if $x$ is contained in $(X_{\sigma_1}\setminus X_\tau) \cap D_1$.

We analyse the Leray spectral sequence $H^i(X'_0,R^j\xi_\star\QQ) \Rightarrow H^{i+j}(X,\QQ)$ for the map $\xi$. We first describe the groups in low degree on the $E_2$ page. To do so, we introduce maps $i_k$ for $k \in \{1,2,3\}$, generalising maps appearing in the proof of \cite[Theorem~$4.1$]{G01}. Let $F_k$ denote the disjoint union of toric codimension $k$ strata of $X'_0$. We let $i_k$, for $k \in \{0,\ldots,3\}$, denote the collection of canonical inclusions $F_k$ into $X'_0$. Note that, while $i_k$ is injective on any connected component of $F_k$, $i_k$ is not generally an injective function.

\begin{pro}
	\label{pro:E2_page}
	The $E_2$ page of the Leray spectral consists of the following groups in low degree. Consequently $b_2(X)$ is equal to $1+\dim(\ker d)$.
\[
	\xymatrix@R-2pc{
	\QQ & & & \\
	0 & \star & & \\
	0 & H^1(R^1\xi_\star\QQ)\ar^(0.4)d[drr] & \star & \\
	\QQ & 0 & \QQ & \QQ \\
	}
\]
\end{pro}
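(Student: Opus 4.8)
The plan is to compute the relevant portion of the $E_2$ page of the Leray spectral sequence $H^i(X'_0, R^j\xi_\star\QQ) \Rightarrow H^{i+j}(X,\QQ)$ directly, using the explicit description of the fibres of $\xi$ established above, and then to read off $b_2(X)$ from the surviving terms. First I would identify the sheaves $R^j\xi_\star\QQ$ on $X'_0$. Over the open locus where $\xi$ restricts to a torus bundle (away from $\xi(\Crit(f))$), the fibre over a point in the relative interior of a $d$-dimensional stratum is a $(3-d)$-torus, so $R^0\xi_\star\QQ = \QQ$ (the sheaf is connected fibrewise, as each fibre is either a torus, a point, an $S^1$, or a pinched torus, all connected), giving the bottom row $H^\bullet(X'_0,\QQ)$. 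Since $X'_0$ is the central fibre of the degeneration, a reducible union of toric varieties whose dual intersection complex is homeomorphic to $S^3$ (as $B$ is simply connected with $B \cong S^3$), I expect $H^0(X'_0,\QQ) = \QQ$, $H^1(X'_0,\QQ) = 0$, $H^2(X'_0,\QQ) = \QQ$, and $H^3(X'_0,\QQ) = \QQ$, which matches the entries in the $j=0$ row displayed in the proposition.

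Next I would compute the $j=1$ row. The sheaf $R^1\xi_\star\QQ$ is supported where the fibres have nontrivial $H^1$: over the generic point of an edge stratum $X_\tau$ the fibre is a circle contributing $H^1 = \QQ$, while the pinched-torus fibres over $X_\tau \cap D_1$ and the point-fibres contribute differently. The key computation is $H^0(X'_0, R^1\xi_\star\QQ)$ and $H^1(X'_0, R^1\xi_\star\QQ)$. I would argue that $H^0(R^1\xi_\star\QQ) = 0$: a global section would amount to a compatible choice of monodromy-invariant $1$-cycle across all fibres, and the vanishing of such invariants is precisely the mechanism used in Proposition~\ref{pro:simply_conn} to establish simple connectivity. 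This gives the $0$ in the $(0,1)$ slot. The entry $H^1(R^1\xi_\star\QQ)$ is left as a genuine cohomology group, and its relation to $\gamma(P,D)$ will be handled in the subsequent definition and calculation; at this stage I only record it as $\star$'s partner in the total degree $2$ computation.

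Assembling total degree $2$, the contributing terms are $E_2^{2,0} = H^2(X'_0,\QQ) = \QQ$, $E_2^{1,1} = H^1(R^1\xi_\star\QQ)$, and $E_2^{0,2} = H^0(R^2\xi_\star\QQ)$, which I expect to vanish since $R^2\xi_\star\QQ$ is supported only on the critical sphere strata with no global sections in degree zero. The only differential that can affect degree $2$ is $d_2 \colon E_2^{1,1} \to E_2^{3,0}$, shown by the arrow in the diagram, landing in $H^3(X'_0,\QQ) = \QQ$. Since the $E_2^{2,0} = \QQ$ term cannot be hit or killed by any differential (the incoming $d_2$ comes from $E_2^{0,1} = H^0(R^1\xi_\star\QQ) = 0$), it survives. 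Therefore $b_2(X) = \dim E_\infty^{2,0} + \dim E_\infty^{1,1} = 1 + \dim(\ker d)$, where $d = d_2$ is the displayed differential, which is exactly the claimed formula. The main obstacle I anticipate is pinning down the sheaves $R^j\xi_\star\QQ$ precisely along the critical locus and its intersection with the new strata $X_\tau$ introduced by the refinement $\P'$ — in particular verifying that the pinched-torus and $S^1$ fibres combine so that $H^0(R^1\xi_\star\QQ)$ genuinely vanishes and $H^0(R^2\xi_\star\QQ) = 0$, since these require a careful local analysis near $\xi(\Crit(f))$ rather than the generic torus-bundle picture.
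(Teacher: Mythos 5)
Your overall architecture matches the paper's: identify the low-degree entries of the $E_2$ page, show $E_2^{0,1}=E_2^{0,2}=0$ and that the bottom row is $\QQ,0,\QQ,\QQ$, then conclude $b_2(X)=\dim E_\infty^{2,0}+\dim E_\infty^{1,1}=1+\dim(\ker d)$; the final bookkeeping with the differentials is right. However, two of the entries are justified by reasoning that does not hold up. First, $R^2\xi_\star\QQ$ is \emph{not} supported only on the critical locus: its stalk over a generic point of a $1$-dimensional toric stratum $X_\tau$ is $H^2(T^2)\cong\QQ$, and over a $0$-dimensional stratum it is $H^2(T^3)\cong\QQ^3$, so the sheaf is supported on the whole $1$-skeleton of $X'_0$ and has plenty of local sections. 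The vanishing of $H^0(X'_0,R^2\xi_\star\QQ)$ is therefore not a support statement; in the paper it is proved by embedding the sheaf into ${i_2}_\star i_2^\star R^2\xi_\star\QQ$, identifying each component $s_\tau$ of a section with an element of $T^\star_v\tau$, observing that $s_\tau=0$ whenever $\tau$ meets $\Delta$ (because $H^0_c(X_\tau\setminus R,\QQ)=0$), and then propagating this vanishing through every vertex using compatibility of the projections $T^\star_vB\to T^\star_v\tau$. Your argument for $E_2^{0,1}=0$ has a similar, milder, issue: for faces $\sigma$ with $\sigma\cap\Delta=\varnothing$ the restriction of $R^1\xi_\star\QQ$ to $X_\sigma$ is a constant sheaf, so there is no monodromy to invoke; one again needs the compatibility with adjacent strata that do meet $\Delta$ (this is exactly why $H^0({\iota_1}_\star\iota_1^\star R^1\xi_\star\QQ)$ has dimension $t(P,D)>0$ in Lemma~\ref{lem:describing_H1} even though $H^0(R^1\xi_\star\QQ)=0$).

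Second, the bottom row. Your stated reason --- that the dual intersection complex of $X'_0$ is homeomorphic to $S^3$ --- would give $H^2(X'_0,\QQ)=H^2(S^3,\QQ)=0$, which is the wrong answer. The class in $H^2(X'_0,\QQ)$ comes from the $H^2$ of the individual toric strata, and showing that it is exactly one-dimensional (and that $H^3(X'_0,\QQ)\cong\QQ$) requires the spectral sequence of the closed cover of $X'_0$ by its maximal toric strata: the bottom row is the \u{C}ech complex of the intersection graph (computing $H^\bullet(S^3)$), the row $E_1^{\bullet,2}$ is a truncation of that same complex, and one must rule out the possibility that the surviving differential kills both candidate classes in degrees $2$ and $3$ --- which the paper does by invoking projectivity of $X'_0$ to force $H^2(X'_0,\QQ)\neq 0$. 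This input is missing from your proposal and cannot be recovered from the $S^3$ statement alone.
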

\begin{proof}
	We follow the the proof of Theorem~$4.1$ of \cite{G01}; noting that a similar calculation was made in \cite[Proposition~$7.9$]{P:Fibrations}. First observe that $R^3\xi_\star\QQ = \bigoplus_{v \in \V{P^\circ}}\QQ_v$, the direct sum of skyscraper sheaves over the $0$-dimensional strata of $P^\circ$. Thus,
	\[
	H^0(X'_0,R^3\xi_\star\QQ) \cong \QQ.
	\]
	Second, we consider the map
	\[
	R^2\xi_\star\QQ \rightarrow {i_2}_\star{i_2}^\star R^2\xi_\star\QQ.
	\]
	Following the argument used in \cite{G01}, this map is monomorphic and there is an inclusion
	\[
	H^0(R^2\xi_\star\QQ) \hookrightarrow H^0({i_2}_\star i_2^\star R^2\xi_\star\QQ).
	\]
	The sheaf ${i_2}_\star i_2^\star R^2\xi_\star\QQ$ is nothing but the direct sum of its restrictions to the one dimensional toric strata of $X'_0$ (corresponding to edges $\tau$ of $\P'$). Each such stratum $X_\tau$ is isomorphic to $\PP^1$. The restriction of $ {i_2}_\star{i_2}^\star R^2\xi_\star\QQ$ of $X_\tau$ is isomorphic to the constant sheaf $\QQ$ away from either a finite set of points, or a circle of points which have trivial stalks. The first case applies to edges $\tau \in \P$, the second to edges introduced in $\P'$. Fix a tuple of sections 
	\[
	s := (s_\tau : \tau \in \Edges(\P')) \in H^0({i_2}_\star i_2^\star R^2\xi_\star\QQ).
	\]
	Each component $s_\tau$ of $s$ can be identified with an element of $H^2(\xi^{-1}(x),\QQ)$ for a point $x \in X_\tau$. This vector space is canonically isomorphic to $T^\star_v\tau$ for any $v \in \V{\tau}$. Note that $s_\tau = 0$ for any $\tau$ such that $\tau \cap \Delta \neq \varnothing$. Now assume that $s$ defines a section of $H^0(R^2\xi_\star\QQ)$. Thus, for any vertex $v$ of $\P$, the elements $s_\tau$ for edges $\tau$ incident to $v$ are all obtained by projections 
	\[
	T^\star_vB \to T^\star_v\tau.
	\]
	Let $R$ denote the restriction of $\xi(\Crit(f))$ to $X_\tau$. We have that $H^0(X_\tau,\QQ_{X_\tau \setminus R}) \cong H^0_c(X_\tau \setminus R,\QQ) \cong \{0\}$. Hence, letting $v$ be a vertex of $P^\circ$, all such projections vanish (since $s_\tau = 0$ for any $\tau$ such that $\tau \cap \Delta \neq \varnothing$); hence the section $s$ of $R^2\xi_\star\QQ$ vanishes at every vertex of $P^\circ$. This implies that $s$ vanishes at every vertex of $\P$, and hence $s=0$. A similar argument, applied to the map
	\[
	R^1\xi_\star\QQ \rightarrow {i_1}_\star{i_1}^\star R^1\xi_\star\QQ.
	\]
	shows that $H^0(X'_0, {i_1}_\star i_1^\star R^1\xi_\star\QQ)$ vanishes. Indeed, letting $C$ denote the restriction to $X_\sigma$ for a face $\sigma \in \P'$ we have that $X_\sigma$ is a weighted projective plane and $C$ is either a sphere, or a disc. In either case, $H^0_c(X_\sigma\setminus C,\QQ) = \{0\}$.

	We now consider the cohomology groups $H^\bullet(X'_0,\xi_\star\QQ)$. Note that, since every fibre of $\xi$ is connected, we have that
	\[
	\xi_\star\QQ \cong \QQ.
	\]
	That is, these cohomology groups are nothing other than the ordinary rational cohomology groups of $X'_0$. Following the proof of \cite[Theorem~$4.1$]{G01}, we use the spectral sequence associated to the decomposition of $X'_0$ into its maximal toric strata. Recall that the underlying complex of the decomposition of $B$ is homeomorphic to $S^3$, and that $H^0(Y,\QQ) \cong H^2(Y,\QQ) \cong \QQ$ for each $3$-dimensional toric stratum $Y$ of $X'_0$.

The bottom row of the $E_1$ page of the spectral sequence associated to the decomposition of $X'_0$ consists of the exact sequence associated to the \u{C}ech complex of the intersection graph of $X'_0$, see \cite[p.$47$]{G01}, which has the form
\begin{equation}
\label{eq:int_complex}
\xymatrix{
\QQ^{n_3} \ar^{d_3}[r] & \QQ^{n_2} \ar^{d_2}[r] & \QQ^{n_1} \ar^{d_1}[r] & \QQ^{n_0},
}
\end{equation}
where $n_i$ records the numbers of $i$-dimensional cells of $\P$ for each $i \in \{0,1,2,3\}$. The odd numbers rows of the $E_1$ page vanish, while the row $E_1^{\bullet,2}$ is the truncation of \eqref{eq:int_complex} to its first three terms. Indeed, for each face $\sigma$ of $\P'$, the toric variety $X_\sigma$ is either a weighted projective space, and $H^2(X_\sigma,\QQ) \cong \QQ$.

Since $X_0$ is projective $X'_0$ is projective and, similarly to \cite[p.$47$]{G01}, $H^2(X'_0,\QQ)$ cannot vanish. It follows that 
\[
H_2(X'_0,\QQ) \cong H_3(X'_0,\QQ) \cong \QQ.
\]
\end{proof}

\begin{lem}
	\label{lem:vanishing_diff}
	Writing $X'_0$ for the topological space obtained by degenerating $X_0(P,D)$ as above, the map
	\[
	d \colon H^1(X'_0,R^1\xi_\star\QQ) \to H^3(X'_0,\xi_\star\QQ),
	\]
	which appears in the statement of Proposition~\ref{pro:E2_page}, vanishes. Hence, by Proposition~\ref{pro:E2_page}, $b_2(X) = h^1(B,R^1\xi_\star\QQ)+1$.
\end{lem}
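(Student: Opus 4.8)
The plan is to show that the unique differential meeting the corner $E_2^{3,0}$ vanishes by exhibiting a surviving class there. First I would record, from the shape of the $E_2$ page in Proposition~\ref{pro:E2_page}, that $d\colon E_2^{1,1}\to E_2^{3,0}$ is the only nonzero differential into or out of the position $(3,0)$: the potential incoming $d_3$ originates at $E_2^{0,2}=0$, and every outgoing differential lands in a row of negative degree. Hence $E_\infty^{3,0}=\operatorname{coker}(d)$. Since $E_2^{3,0}=H^3(X'_0,\QQ)\cong\QQ$ by Proposition~\ref{pro:E2_page}, the map $d$ is either zero or surjective, and it is zero precisely when $E_\infty^{3,0}\neq 0$. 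As $E_\infty^{3,0}=F^3H^3(X,\QQ)=\operatorname{im}\bigl(\xi^\star\colon H^3(X'_0,\QQ)\to H^3(X,\QQ)\bigr)$ is the bottom step of the Leray filtration, it therefore suffices to prove that $\xi^\star$ is nonzero on $H^3$, i.e. that the generator of $H^3(X'_0,\QQ)$ survives to $X$.

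Second I would identify this generator. By the proof of Proposition~\ref{pro:E2_page} the class generating $H^3(X'_0,\QQ)$ arises from the top cohomology of the intersection complex \eqref{eq:int_complex}, whose realisation is the dual complex of $X'_0$, homeomorphic to $B\cong S^3$. Writing $\pi'\colon X'_0\to B$ for the collapse satisfying $\pi=\pi'\circ\eta$, so that $f=\pi'\circ\xi$, naturality of the Mayer--Vietoris spectral sequence under $\pi'$ shows that $\pi'^\star\colon H^3(B,\QQ)\to H^3(X'_0,\QQ)$ is an isomorphism carrying the fundamental class $[B]^\vee$ to this generator. Consequently $\xi^\star(\text{generator})=\xi^\star\pi'^\star[B]^\vee=f^\star[B]^\vee$, and it is enough to show that $f^\star\colon H^3(B,\QQ)\to H^3(X,\QQ)$ is nonzero.

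Third I would invoke a section. The torus fibration $f\colon X\to B$ restricts over $B_0$ to $T^\star B_0/\breve\Lambda\to B_0$, which carries the zero section; by the local models used in the compactification of \cite{G01} this section extends continuously across the discriminant $\Delta$ to a global section $s\colon B\to X$. Then $f\circ s=\operatorname{id}_B$ yields $s^\star f^\star[B]^\vee=[B]^\vee\neq 0$, so $f^\star[B]^\vee\neq 0$. Hence $E_\infty^{3,0}\neq 0$, which forces $d=0$; combined with Proposition~\ref{pro:E2_page} this gives $b_2(X)=1+\dim_\QQ E_2^{1,1}=h^1(R^1\xi_\star\QQ)+1$, as claimed.

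The main obstacle is the input to the third step: producing a genuine global section of $f$, which requires checking that the zero section extends over each type of singular fibre (generic, positive, and negative) of the compactification. A secondary point that needs care is the compatibility asserted in the second step — that $\pi'$ induces the identity on the dual complex and hence an isomorphism on $H^3$ — which I would justify by tracking the generator through the Mayer--Vietoris spectral sequence of the cover of $X'_0$ by its irreducible components.
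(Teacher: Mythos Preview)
Your proposal is correct and follows essentially the same argument as the paper: both exploit the factorisation $f=\pi'\circ\xi$ together with a global section $s$ of $f$ to show that $\xi^\star$ is nonzero on $H^3$, whence $E_\infty^{3,0}\neq 0$ and $d=0$. Your second step can be shortened by observing that $\xi\circ s$ is itself a section of $\pi'$, so $\pi'^\star$ is injective on $H^3$ without any appeal to the Mayer--Vietoris spectral sequence.
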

\begin{proof}
	This follows from the argument used in \cite[Lemma~$2.4$]{G01I} to prove that the Leray spectral sequence associated to $f$ degenerates at the $E_2$ page. In particular, we recall that $d$ fits into an exact sequence
	\[
	 H^1(X'_0,R^1\xi_\star\QQ) \to H^3(X'_0,\xi_\star\QQ) \to E^{0,3}_\infty \to H^3(X,\QQ).
	\]
	Moreover the composition of the latter two maps is the pullback $\xi^\star \colon H^3(X'_0,\xi_\star\QQ) \to H^3(X,\QQ)$. The topological torus fibration $f$ factors through $\xi$; indeed, $f = \bar{\pi}\circ \xi$, where $\bar{\pi}$ is the collection of moment maps of maximal toric strata of $X'_0$. The map $f$ admits a section which factors through $\xi$. Hence the image of $\xi^\star$ has positive dimension, and the image of $d$ is trivial. 
\end{proof}

To compute $h^1(X'_0,R^1\xi_\star\QQ)$ we consider the inclusion of the disjoint union of two dimensional toric strata ${\iota_1}_\star\iota_1^\star R^1\xi_\star\QQ$. Let $\cF$ denote the cokernel of the monomorphism
\[
R^1\xi_\star\QQ \to {\iota_1}_\star\iota_1^\star R^1\xi_\star\QQ.
\]
In particular, $\cF$ fits into a short exact sequence; analysing the corresponding long exact sequence we obtain a relation between $H^0(X'_0,\cF)$ and $H^1(X'_0,R^1\xi_\star \QQ)$. To state this, we define $t(P,D)$ to be the number of $2$-dimensional faces of $P^\circ$ disjoint from $\Delta$.

\begin{lem}
	\label{lem:describing_H1}
	The space $H^0(X'_0, {\iota_1}_\star\iota_1^\star R^1\xi_\star\QQ)$ has dimension $t(P,D)$. Moreover, there is an equality
	\[
	 h^0(X'_0,\cF) = h^1(X'_0,R^1\xi_\star \QQ) + t(P,D).
	\]
\end{lem}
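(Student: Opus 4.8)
The plan is to compute the two quantities asserted in Lemma~\ref{lem:describing_H1} separately: first determine $h^0(X'_0,{\iota_1}_\star\iota_1^\star R^1\xi_\star\QQ)$ by a direct local analysis of the sheaf, and then extract the stated relation from the long exact sequence of the short exact sequence
\[
0 \to R^1\xi_\star\QQ \to {\iota_1}_\star\iota_1^\star R^1\xi_\star\QQ \to \cF \to 0.
\]

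First I would analyse the sheaf ${\iota_1}_\star\iota_1^\star R^1\xi_\star\QQ$, whose global sections decompose as a direct sum over the $2$-dimensional toric strata $X_\sigma$ of $X'_0$ (indexed by faces $\sigma$ of $\P'$, each a weighted projective plane). On such an $X_\sigma$ the stalk of $R^1\xi_\star\QQ$ away from the critical image $C = \xi(\Crit(f)) \cap X_\sigma$ is canonically $T^\star_v\sigma$, a $2$-dimensional space, while along $C$ the rank drops by the vanishing cycle. As in the computation of $H^0(R^2\xi_\star\QQ)$ in the proof of Proposition~\ref{pro:E2_page}, a global section over $X_\sigma$ is forced to vanish unless $C$ is empty or disconnected in the relevant way, using $H^0_c(X_\sigma \setminus C,\QQ) = \{0\}$ when $C$ meets the relevant strata. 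The point is that a nonzero section survives precisely when the face $\sigma$ is disjoint from the discriminant locus $\Delta$: in that case the monodromy is trivial, $R^1\xi_\star\QQ|_{X_\sigma}$ is the constant sheaf $\QQ$ (after the fibre-dimension drop), and it contributes a one-dimensional space of sections. Counting these faces is exactly $t(P,D)$, the number of $2$-dimensional faces of $P^\circ$ disjoint from $\Delta$, giving the first assertion.

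For the second assertion I would write down the long exact sequence in cohomology,
\[
H^0(R^1\xi_\star\QQ) \to H^0({\iota_1}_\star\iota_1^\star R^1\xi_\star\QQ) \to H^0(\cF) \to H^1(R^1\xi_\star\QQ) \to H^1({\iota_1}_\star\iota_1^\star R^1\xi_\star\QQ) \to \cdots,
\]
all cohomology taken over $X'_0$. The strategy is to show $H^0(R^1\xi_\star\QQ) = 0$ so the sequence begins with the injection $H^0({\iota_1}_\star\iota_1^\star R^1\xi_\star\QQ) \hookrightarrow H^0(\cF)$, and then to show $H^1({\iota_1}_\star\iota_1^\star R^1\xi_\star\QQ) = 0$ so that the connecting map $H^0(\cF) \to H^1(R^1\xi_\star\QQ)$ is surjective. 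The first vanishing should follow from the same skyscraper/compact-support argument sketched in Proposition~\ref{pro:E2_page} for the $R^2$ case, tracing a global section to every vertex of $P^\circ$ where it must vanish. The second vanishing reduces to computing $H^1$ on each $X_\sigma \cong \PP^2$ (or weighted $\PP^2$) of the restricted sheaf, which is constant or supported on a sphere/disc $C$; these pieces have no $H^1$. Granting both vanishings, the exact sequence collapses to
\[
0 \to H^0({\iota_1}_\star\iota_1^\star R^1\xi_\star\QQ) \to H^0(\cF) \to H^1(R^1\xi_\star\QQ) \to 0,
\]
and additivity of dimensions yields $h^0(\cF) = h^1(X'_0,R^1\xi_\star\QQ) + t(P,D)$, as claimed.

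The main obstacle I anticipate is the sheaf-theoretic bookkeeping needed to justify $H^1({\iota_1}_\star\iota_1^\star R^1\xi_\star\QQ) = 0$ rigorously: one must control the local structure of $R^1\xi_\star\QQ$ along the critical locus $C$ on each weighted projective plane $X_\sigma$, distinguishing the case where $C$ is a full sphere from where it is a disc, and verifying that in each case the direct-summand restriction has vanishing first cohomology. This is precisely where the delicate geometry of the fibration near $\Delta$ enters, and where I would lean most heavily on the local models of Gross~\cite{G01} and the explicit fibre descriptions in Remark~\ref{rem:topology_of_fibres}. The remaining steps are essentially the same compact-support and connected-fibre arguments already deployed in Proposition~\ref{pro:E2_page}, which can be reused with minor modification.
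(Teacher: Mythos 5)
Your proposal follows essentially the same route as the paper: the same direct-sum decomposition of ${\iota_1}_\star\iota_1^\star R^1\xi_\star\QQ$ over the $2$-dimensional strata, the same case analysis (vanishing of $H^0$ and $H^1$ of $\QQ_{X_\sigma\setminus C}$ via compactly supported cohomology and Poincar\'e duality when $\sigma$ meets $\Delta$, a one-dimensional contribution otherwise), and the same five-term exact sequence combined with $H^0(X'_0,R^1\xi_\star\QQ)=0$ from Proposition~\ref{pro:E2_page} and the vanishing of $H^1({\iota_1}_\star\iota_1^\star R^1\xi_\star\QQ)$. One small correction: the stalk of $\iota_1^\star R^1\xi_\star\QQ$ at a generic point of $X_\sigma$ is $H^1$ of a circle fibre, hence one-dimensional (canonically $\Ann(T_v\sigma)\subset T_v^\star B$, not the two-dimensional $T^\star_v\sigma$), which is exactly what makes each face disjoint from $\Delta$ contribute a single dimension as you later assert.
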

\begin{proof}
	Recall the equality of sheaves
	\[
	{\iota_1}_\star\iota_1^\star R^1\xi_\star\QQ = \bigoplus_{\sigma \in \Faces(\P',2)} \QQ|_{X_\sigma \setminus C},
	\]
	where $C := \xi(\Crit(f)) \cap X_\sigma$. First assume that $\sigma \cap \Delta \neq \varnothing$; this case separates into two sub-cases.
	\begin{enumerate}
		\item $X_\sigma \cong \PP^2$ and $C$ is a sphere in the class of a complex projective line. In this case $H^i(X_\sigma, \QQ_{X_\sigma \setminus C}) = H^i_c(X_\sigma \setminus C, \QQ)$. By Poincar\'e duality this is isomorphic to the group $H_{4-i}(X_\sigma \setminus C, \QQ)$, which vanishes for each $i \in \{0,1\}$.
		\item  $X_\sigma$ is a weighted projective plane, and $C$ is a disk. In this case the complement retracts onto a projective line, and again $H^i$ vanishes for $i \in \{0,1\}$.
	\end{enumerate}
	
	Next assume that $\sigma \cap \Delta = \varnothing$. In this case $X_\sigma$ is a weighted projective plane, and $H^0(X_\sigma,\QQ) \cong \QQ$ and $H^1(X_\sigma,\QQ) \cong \{0\}$. To prove the final equality in the statement of Lemma~\ref{lem:describing_H1}, we study following exact sequence
	\begin{align*}
	H^0(X'_0, R^1\xi_\star\QQ) &\to H^0(X'_0, {\iota_1}_\star\iota_1^\star R^1\xi_\star\QQ) \to H^0(X'_0, \cF) \to \\
	\to H^1(X'_0, R^1\xi_\star\QQ) &\to H^1(X'_0, {\iota_1}_\star\iota_1^\star R^1\xi_\star\QQ) .
	\end{align*}
	By Proposition~\ref{pro:E2_page}, $H^0(X'_0, R^1\xi_\star\QQ)$ vanishes. Moreover, the above analysis of $\QQ_{X_\sigma \setminus C}$ for each face $\sigma \in \Faces(P^\circ,2)$ shows that $ H^1(X'_0, {\iota_1}_\star\iota_1^\star R^1\xi_\star\QQ) = \{0\}$; from which the result follows.
\end{proof}

To compute the dimension of $H^0(X'_0,\cF)$, we make use of an inverse limit of a system of vector spaces associated to $(P,D)$, defined as follows. Note that, for any polytope $\rho$ in $N_\RR$, we can define a subspace $T\rho \subset N_\RR$ by translating $\rho$ so that $\rho$ contains the origin and taking the linear span. Equivalently, $T\rho$ is the minimal vector subspace of $N_\RR$ containing some translate of $\rho$.

\begin{enumerate}
	\item Observe that the set 
	\[
	\Edges(P)~{\mbox{\larger[-2]$\coprod$}}~~\coprod_{\rho \in \Faces(P,2)}{D(\rho)} 
	\]
	is partially ordered by inclusion (using the canonical matching of edges of each $m \in D(\rho)$ and edges of $\rho$).
	\item To each edge $E$ of $P$ we associate the vector space $(TE \cap N_\QQ)^\star$.
	\item To each $m \in D(\rho)$ we associate the vector space $(Tm \cap N_\QQ)^\star$.
	\item We define an inverse system by associating the map dual to the inclusion $TE \to Tm$ if $E \leq m$ in this partially ordered set.
\end{enumerate}

\begin{dfn}
	\label{dfn:gammaPD}
	The inverse limit of the above system is a $\QQ$-vector space $\Gamma(P,D)$. We let $\gamma(P,D)$ denote the dimension of $\Gamma(P,D)$.
\end{dfn}

\begin{rem}
	Note that an element of $\Gamma(P,D)$ is determined by its projection to vector spaces associated to the edges of $P$. Equivalently, we may consider these to be vectors associated to elements of $\Faces(P^\circ,2)$, and the condition of lying in the inverse limit imposes conditions on these vectors. In other words, we may identify $\Gamma(P,D)$ with a subspace of $\bigoplus_{\sigma \in \Faces(P^\circ,2)}{T_vB/T_v\sigma}$ where $v$ is a vertex of each face $\sigma$. Note that we may drop the dependence on $v$ since 
	\[
	T_vB/T_v\sigma \cong M_\RR/\langle \sigma \rangle \cong (T\sigma^\star \cap N_\QQ)^\star;
	\]
	where $\langle \sigma \rangle$ denotes the linear span of (the cone over) $\sigma$.
\end{rem}

To compute $h^0(X'_0,\cF)$ we make use of an additional sheaf $\cG$ on $X'_0$. This is defined to be such that $\cG_x = M_\QQ/\langle \sigma \rangle$, where $\sigma$ is the minimal face of $\P'$ such that $x \in X_\sigma$. We show that there is a natural map $\cF \to \cG$, and describe the induced morphism between stalks of $\cF$ and $\cG$. This analysis is similar to that pursued in \cite[\S$7$]{P:Fibrations}.

We first describe the stalks of the sheaf $\cF$. Fix an edge $\tau$ of $\P'$ and let $\sigma_1,\ldots, \sigma_k \in \Faces(\P',2)$ denote the faces of $\P'$ meeting $\tau$. Given a point $q$ on the projective line corresponding to $\tau$ not contained in the singular locus, $\cF_q$ is the cokernel of the map
\[
H^1(\xi^{-1}(q),\QQ) \cong \QQ^2 \rightarrow \bigoplus_{1 \leq j \leq k} H^1(\xi^{-1}(q_j),\QQ) \cong \QQ^k,
\]
where $q_j \in X_{\sigma_j} \setminus X_{\tau}$ are points close to $q$ for each $j \in \{1,\ldots,k\}$. Suppose now that $q$ is the image under $\xi$ of the (unique) singular point of the fibre lying over a positive node of $B$, and let $j_1$, $j_2$, and $j_3$ in $\{1,\ldots,k\}$ be the indices of the faces whose interiors intersect the singular locus in any neighbourhood of the image of $q$ in $B$. Setting $I_q := \{1,\ldots, k\} \setminus \{j_1,j_2,j_3\}$, $\cF_q$ is the cokernel of the map
\[
H^1(\xi^{-1}(q),\QQ) \cong \{0\} \rightarrow \bigoplus_{j \in I_q} H^1(\xi^{-1}(q_j),\QQ) \cong \QQ^{k-3}
\]
where the second sum is over faces meeting $\tau$ which do not meet singular locus near $q$. Suppose finally that $q$ lies over a general point of the singular locus of $B$ contained in $\tau \in \Edges(\P')$ and $q$ is contained in the image (under $\xi$) of the circle of singularities of this fibre. Then $\cF_q$ is the cokernel of the specialization map
\[
H^1(\xi^{-1}(q),\QQ) \cong \QQ^1 \rightarrow \bigoplus_{j \in I_q} H^1(\xi^{-1}(q_j),\QQ) \cong \QQ^{k-2}
\]
where, again, $I_q$ indexes faces $\sigma_j$ for $j \in \{1,\ldots,k\}$ such that the interior of $\sigma_j$ does not intersect the singular locus in some neighbourhood of the image of $q$ in $B$. This map factors as 

\begin{equation}
\label{eq:factorise}
\QQ^1 \to H^1(\xi^{-1}(q'),\QQ) \to \bigoplus_{j \in I_q} H^1(\xi^{-1}(q_j),\QQ) \cong \QQ^{k-2}
\end{equation}

where $q' \in X_\tau$ is a nearby point not contained in $\xi(\Crit(f))$. The segment of $\Delta$ containing $\tau$ determines the class of a vanishing cycle $\alpha \in H_1(f^{-1}(b),\QQ)$, where $b \in B_0$ is the image of $q'$ in $\tau$. The space $H_1(\xi^{-1}(q'),\RR)$ is canonically isomorphic to the annihilator of $\tau$ in $T^\star_bB$, which contains $\alpha$. The map $\QQ \to H^1(\xi^{-1}(q'),\QQ)$ above is (the restriction to rational points of) the dual map to the quotient map $\Ann(T_b\tau) \to \Ann(T_b\tau)/\langle \alpha \rangle$.

\begin{rem}
 Note that this analysis holds when $q \in X_\tau$, and $\tau \in \Edges(\P')$ is not an edge of $\P$, although in this case there are a circle of points $q \in X_\tau \cap \xi(\Crit(f))$. The difference between this case and the case $\tau \in \Edges{\P}$ can be interpreted as the fact that, while $\alpha \in \Ann(T_b\tau)$ in either case, this subspace is only the monodromy invariant plane in $H_1(f^{-1}(b),\QQ)$ for loops around the given segment of $\Delta$ if $\tau \in \Edges(\P)$.
\end{rem}

We now consider the maps of stalks $\cF_q \to \cG_q$. These are defined by the diagram.
\begin{equation}
\label{eq:cFprime}
\xymatrix{
	H^1(\xi^{-1}(q),\QQ) \ar[r] \ar[d] & \bigoplus\limits_{j \in I_q} H^1(\xi^{-1}(q_j),\QQ) \ar[r] \ar[d] & \cF_{q} \ar[d] \\
	H^1(\xi^{-1}(q'),\QQ) \ar[r] & \bigoplus\limits_{1 \leq j \leq k} H^1(\xi^{-1}(q_j),\QQ) \ar[r] & \cG_q
}
\end{equation}

In the case of a positive node (a two-dimensional factor in $D(\tau^\star)$), the diagram~\eqref{eq:cFprime} becomes:
\begin{equation}
\label{eq:positive}
\xymatrix{
	\{0\} \ar[r] \ar[d] & \QQ^{k-3} \ar[r] \ar[d] & \cF_q \ar[d] \\
	\QQ^2 \ar[r] & \QQ^k \ar[r] & \cG_{q}
}
\end{equation}
That is, the condition imposed on $\QQ^k$ by a $2$-dimensional element $m \in D(\tau^\star)$ is that values on the factors corresponding $\sigma_j$, where $\sigma_j^\star \in \Edges(\tau^\star,m)$ sum to an element of $H^1(\xi^{-1}(q'),\QQ)$, where $q' \in X_\tau$ is the point appearing in \eqref{eq:factorise}. In the second case, $\dim m = 1$, and the diagram~\eqref{eq:cFprime} becomes
\begin{equation}
\label{eq:generic}
\xymatrix{
	\QQ \ar[r] \ar[d] & \QQ^{k-2} \ar[r] \ar[d] & \cF_{q} \ar[d] \\
	\QQ^2 \ar[r] & \QQ^k \ar[r] & \cG_{q}.
}
\end{equation}
In other words, the two components corresponding to faces $\sigma$ which intersect $\Delta$ near $q$ must sum to zero.

\begin{lem}
	\label{lem:injective}
	The canonical map $\cF \to \cG$ is a monomorphism.
\end{lem}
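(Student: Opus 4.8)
The plan is to verify that $\cF \to \cG$ is injective on every stalk, which is equivalent to $\cF \to \cG$ being a monomorphism. Since ${\iota_1}_\star\iota_1^\star R^1\xi_\star\QQ$ is supported on the $2$-dimensional strata, the stalk $\cF_q$ vanishes whenever $q$ lies in the interior of a stratum of dimension $2$ or $3$, so injectivity is automatic there. At a general point of a $1$-dimensional stratum $X_\tau$ lying off $\xi(\Crit(f))$ the top and bottom rows of \eqref{eq:cFprime} coincide (one takes $q' = q$ and $I_q = \{1,\dots,k\}$), so $\cF_q \to \cG_q$ is the identity and again injective. All of the content is therefore concentrated at the points $q \in X_\tau$ lying over the discriminant $\Delta$; the finitely many $0$-dimensional strata are governed by the same local linear algebra and present no additional difficulty.

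At such a point I would argue purely diagrammatically from \eqref{eq:cFprime}. Write $\psi$ and $\psi'$ for the two left-hand horizontal maps and $a \colon A \to A'$, $b \colon B \to B'$ for the two left-hand vertical maps, where $A = H^1(\xi^{-1}(q),\QQ)$, $B = \bigoplus_{j \in I_q} H^1(\xi^{-1}(q_j),\QQ)$, and $A'$, $B'$ are the corresponding terms of the bottom row; the rows are right-exact, so $\cF_q = B/\im(\psi)$ and $\cG_q = B'/\im(\psi')$. Two structural facts drive the argument: the middle vertical map $b$ is the inclusion of the coordinate subspace indexed by $I_q$, hence injective, and commutativity of the left square gives $b(\im(\psi)) \subseteq \im(\psi')$. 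Consequently the induced map $c \colon \cF_q \to \cG_q$ is injective if and only if $b^{-1}(\im(\psi')) = \im(\psi)$; since the inclusion $\supseteq$ is automatic, everything reduces to the implication that any $z' \in A'$ whose image $\psi'(z')$ is supported on the $I_q$-coordinates must already lie in $\im(a)$. Indeed, writing such a $z'$ as $z' = a(w)$ gives $\psi'(z') = \psi'(a(w)) = b(\psi(w))$, and injectivity of $b$ then forces the corresponding class in $\cF_q$ to vanish.

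To establish the implication I would use the identification, recorded before \eqref{eq:cFprime}, of each specialization map $r_j \colon H^1(\xi^{-1}(q'),\QQ) \to H^1(\xi^{-1}(q_j),\QQ)$ with the dual of the annihilator inclusion $\Ann(T_b\sigma_j) \hookrightarrow \Ann(T_b\tau)$; thus $\ker(r_j) = \Ann(T_b\sigma_j)^{\perp}$ is a line in $A' = \Ann(T_b\tau)^{\star}$, and the hypothesis on $z'$ reads $z' \in \bigcap_{j \notin I_q}\ker(r_j)$. In the generic singular case \eqref{eq:generic} there are exactly two excluded faces. If their tangent planes at $b$ are distinct, the lines $\ker(r_{j_1})$, $\ker(r_{j_2})$ are distinct in the two-dimensional space $A'$ and meet only in $0 \subseteq \im(a)$; if instead the two faces lie in a common $2$-face $\sigma$ of $P^\circ$ they share a tangent plane $T_b\sigma$, and the monodromy computation of \S\ref{sec:smoothing_polytope} identifies the vanishing cycle $\alpha$ of \eqref{eq:factorise} with a generator of $\Ann(T_b\sigma)$, so that $\bigcap_{j \notin I_q}\ker(r_j) = \Ann(T_b\sigma)^{\perp} = \alpha^{\perp} = \im(a)$. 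Either way the implication holds. The new negative vertices created by the subdivision $\P'$ are handled identically, replacing the monodromy normal form by its inverse transpose.

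The main obstacle is the positive node \eqref{eq:positive}, where $A = \{0\}$ and one must instead show $\bigcap_{i=1}^{3}\ker(r_{j_i}) = 0$. This is precisely where simplicity of $(B,\P)$ enters: the normal form \eqref{eq:positive_monodromy} for the three monodromy operators $T_1,T_2,T_3$ exhibits their invariant planes $T_b\sigma_{j_1}, T_b\sigma_{j_2}, T_b\sigma_{j_3}$ as three \emph{distinct} planes, each containing the edge direction $T_b\tau$. Hence the three annihilator lines $\Ann(T_b\sigma_{j_i})$ are distinct in the two-dimensional space $\Ann(T_b\tau)$, and dually the three kernels $\ker(r_{j_i})$ are three distinct lines in $A' \cong \QQ^2$, meeting only in $0 = \im(a)$. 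Assembling the three cases yields injectivity of $\cF_q \to \cG_q$ at every $q$, and hence that $\cF \to \cG$ is a monomorphism.
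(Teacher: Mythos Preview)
Your proof is correct and follows essentially the same approach as the paper: both verify injectivity stalk-by-stalk via the diagram~\eqref{eq:cFprime}, reducing to the statement that $b(B)\cap\psi'(A') = b(\im\psi)$. The paper argues this more tersely, simply asserting that in the positive-node case the images of $\QQ^{k-3}$ and $\QQ^2$ in $\QQ^k$ meet trivially, and that in the generic case the images of $\QQ^{k-2}$ and $\QQ^2$ meet in the one-dimensional image of $\QQ \to \QQ^{k-2}$; you unpack these assertions via the identifications $\ker(r_j)\cong T_b\sigma_j/T_b\tau$ and $\im(a)=\alpha^\perp$, and invoke the monodromy normal forms of \S\ref{sec:smoothing_polytope} to see that the relevant lines are distinct. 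Your additional case split in the generic situation (distinct vs.\ coincident tangent planes) is more careful than the paper---indeed it covers segments of $\Delta$ crossing a subdivided edge of $P^\circ$, where the two excluded faces lie in different $2$-faces of $P^\circ$ and the intersection is genuinely $\{0\}$ rather than one-dimensional---but the conclusion is the same. Your treatment of the $0$-dimensional strata is brief but adequate: vertices of $\P'$ lie off $\Delta$, so the top and bottom rows of the relevant diagram coincide and the map on stalks is an isomorphism there.
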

\begin{proof}
	This follows directly from our analysis of the commutative diagram \eqref{eq:cFprime}. If $q$ is not in $\xi(\Crit(f))$ then $\cF_q \to \cG_q$ is an isomorphism, and hence injective. If $q \in \xi(\Crit(f))$ lies over a positive node of $\Delta$, then the morphism $\cF_q \to \cG_q$ is described in \eqref{eq:positive}. An element in the kernel of this map lifts to an element of $\QQ^{k-3}$ whose image in $\QQ^k$ lies in the image of $\QQ^2$. However, the images of $\QQ^{k-3}$ and $\QQ^2$ intersect trivially. Similarly, if $q \in \xi(\Crit(f))$ lies over a general point in $\Delta$, then an element in the kernel of $\cF_q \to \cG_q$ lies in the intersection of both $\QQ^{k-2}$ and $\QQ^2$. However these intersect in a one dimensional space, the image of $\QQ \to \QQ^{k-2}$.
\end{proof}

\begin{lem}
	\label{lem:cohomology_vanishes}
	Given the $\cG$ on $X'_0$ as above, we have that
	\begin{align*}
	H^0(X'_0,\cG) \cong M_\QQ&, \text{ and} \\
	H^1(X'_0,\cG) = \{0\}&.
	\end{align*}
\end{lem}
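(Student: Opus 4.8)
The plan is to recognise $\cG$ as a cellular sheaf on the polyhedral complex underlying $X'_0$ and to reduce its cohomology to combinatorial data on $B\cong S^3$. Concretely, $\cG$ is constant equal to $\MQ/\langle\sigma\rangle$ on the orbit lying over the relative interior of each cell $\sigma\in\Faces(\P',\dim\sigma)$, with generization maps given by the quotients $\MQ/\langle\tau\rangle\to\MQ/\langle\sigma\rangle$ attached to face inclusions $\tau\subseteq\sigma$; thus $\cG$ agrees with the pullback $\bar\pi^{\star}\underline\cG$ of the corresponding cellular sheaf $\underline\cG$ on $B$, where $\bar\pi$ is the collection of moment maps. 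Since $\bar\pi$ is surjective with connected fibres, $H^{0}(X'_0,\cG)=H^{0}(B,\underline\cG)$ is the inverse limit $\varprojlim_{\sigma}\MQ/\langle\sigma\rangle$. To evaluate it I would use the short exact sequence of cellular sheaves
\[
0\to\mathcal{K}\to\underline{\MQ}\to\underline\cG\to0,\qquad \mathcal{K}(\sigma)=\langle\sigma\rangle,
\]
together with $H^{\bullet}(B,\underline{\MQ})=H^{\bullet}(S^{3})\otimes\MQ$, which is $\MQ$ in degrees $0,3$ and vanishes in degrees $1,2$. A global section of $\mathcal{K}$ is a single vector lying in $\langle v\rangle$ for every vertex $v$ of $\P'$, and already $\bigcap_{v\in\V{P^\circ}}\langle v\rangle=0$ because the vertices of $P^\circ$ span $M_\RR$; hence $H^{0}(B,\mathcal{K})=0$, and the long exact sequence gives $0\to\MQ\to H^{0}(X'_0,\cG)\to H^{1}(B,\mathcal{K})\to0$, while $H^{1}(B,\underline\cG)\cong H^{2}(B,\mathcal{K})$.

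For $H^{1}(X'_0,\cG)$ the moment map is not cohomologically trivial, since its fibres are tori, so I cannot simply replace $X'_0$ by $B$. Instead I would run the Leray spectral sequence for $\bar\pi$ (equivalently, the stratification spectral sequence used in the proof of Proposition~\ref{pro:E2_page}), whose relevant terms are $H^{1}(B,\underline\cG)$ and the fibrewise contribution $H^{0}(B,\underline\cG\otimes R^{1}\bar\pi_{\star}\QQ)$. The first vanishes once $H^{2}(B,\mathcal{K})=0$; the second I would kill using the stalkwise description of the interaction between $\cG$ and the vanishing cycles developed just before this lemma (the same mechanism that makes $\cF\hookrightarrow\cG$ a monomorphism, Lemma~\ref{lem:injective}), which forces any fibrewise class paired nontrivially with the cell-spans to vanish. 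Combining the two inputs yields $H^{1}(X'_0,\cG)=0$ and $H^{0}(X'_0,\cG)=\MQ$.

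Everything therefore rests on the combinatorial vanishing $H^{1}(B,\mathcal{K})=H^{2}(B,\mathcal{K})=0$ for the sheaf of cell-spans, and this is the step I expect to be hardest. The approach I would take is to peel off the radial (Euler) direction through a further short exact sequence $0\to\mathcal{T}\to\mathcal{K}\to\underline\QQ\to0$, where $\mathcal{T}(\sigma)=T\sigma$ is the polyhedral tangent sheaf of $\partial P^\circ$ and $\langle\sigma\rangle/T\sigma$ is the constant radial line. As $H^{0}(B,\mathcal{T})=0$ (vertices contribute $Tv=0$) and $H^{\bullet}(B,\underline\QQ)=H^{\bullet}(S^{3})$, the vanishing reduces to showing that $H^{1}(B,\mathcal{T})$ is generated by the radial class and that $H^{2}(B,\mathcal{T})=0$. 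I would prove this local-to-globally: cover $B$ by the contractible open stars of the vertices $v\in\V{P^\circ}$, on each of which the tangent spaces of the incident cells are the faces of a single cone, so that the local cochain complex of $\mathcal{T}$ is exact in positive degrees, and then assemble by Mayer--Vietoris using $H^{1}(S^{3})=H^{2}(S^{3})=0$ to annihilate the global terms. Establishing the local acyclicity of $\mathcal{T}$ on each vertex star, and controlling the affine monodromy along the edges of the discriminant locus when gluing the stars, is where the substance of the argument lies.
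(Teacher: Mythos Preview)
Your approach is genuinely different from the paper's, and more circuitous. The paper does not go through the moment map at all: it applies the \v{C}ech-to-derived spectral sequence for the decomposition of $X'_0$ into its maximal toric strata. Because each $X_\sigma$ is a weighted projective space, the higher rows vanish and one lands directly on the combinatorial complex
\[
0 \to \bigoplus_{v}\MQ/\langle v\rangle \to \bigoplus_{\tau}\MQ/\langle\tau\rangle \to \bigoplus_{\sigma}\MQ/\langle\sigma\rangle.
\]
The paper then recognises this (after augmenting by $\MQ$) as a row of the weight spectral sequence for the simplicial toric variety $X_\Sigma$ built from the cones over $\P'$, and quotes \cite[Proposition~12.3.10, Theorem~12.3.11]{CLS11} to read off the required vanishing. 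So the hard combinatorics you anticipate is dispatched by a citation.

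Your route can be made to work, but two pieces of the sketch are off. First, the fibrewise term $E_2^{0,1}=H^0(B,\underline\cG\otimes R^1\bar\pi_\star\QQ)$ has nothing to do with Lemma~\ref{lem:injective} or with vanishing cycles for $\xi$; that lemma concerns $\cF\to\cG$, not $\bar\pi$. The correct (and much simpler) reason this group vanishes is that the stalk of $R^1\bar\pi_\star\QQ$ at any vertex $v\in\V{\P'}$ is $H^1(\mathrm{pt})=0$, so every global section is forced to be zero by generisation from the $0$-skeleton. Second, the final combinatorial vanishing $H^1(B,\mathcal{K})=H^2(B,\mathcal{K})=0$ is exactly the content the paper extracts from \cite{CLS11}. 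Your proposed Mayer--Vietoris over vertex stars might be made to work, but the mention of ``controlling the affine monodromy along the edges of the discriminant locus'' is a red herring: the sheaves $\mathcal{K}$ and $\mathcal{T}$ are defined purely in terms of the linear spans $\langle\sigma\rangle\subset\MQ$ inside a fixed vector space and see nothing of the integral affine structure or $\Delta$. What you actually need is the acyclicity of the complex of cone-spans for a complete simplicial fan, which is precisely the toric input the paper invokes.
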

\begin{proof}
	There is a \u{C}ech-to-derived spectral sequence associated to the decomposition of $X'_0$ into toric varieties given by the maximal cells of $\P'$. The bottom row of this complex has the form
	\begin{equation}
	\label{eq:toric_complex}
	0 \to \bigoplus_{v \in \V{\P'}} M_\QQ/\langle v \rangle \to \bigoplus_{\tau \in \Edges(\P')} M_\QQ/\langle \tau \rangle \to \bigoplus_{\sigma \in \Faces(\P',2)} M_\QQ/\langle \sigma \rangle.
	\end{equation}
	Moreover, since $H^1(X_\sigma,\QQ) = \{0\}$ for all faces $\sigma$ of $\P'$, we have that $H^1(X'_0,\cG)$ is isomorphic to the first cohomology group of \eqref{eq:toric_complex}.
	
	Taking cones over the faces of $\P'$ defines a simplicial fan $\Sigma$, and a corresponding toric variety $X_\Sigma$. There is a spectral sequence \cite[\S$12.3$]{CLS11},
	\[
	E_1^{p,q} = H_c^{p+q}(X_p,X_{p-1},\QQ) \Rightarrow H_c^{p+q}(X_\Sigma,\QQ),
	\]
	where $X_p$ denotes the union of the $p$-dimensional toric strata of $X_{\Sigma}$. Moreover, the row $(E^{p,\bullet}_1,d)$ of this sequence is dual to \eqref{eq:toric_complex} (after replacing the left-most zero of \eqref{eq:toric_complex} with $M_\QQ$). Applying \cite[Proposition~$12.3.10$]{CLS11}, this spectral sequence degenerates at the $E_2$ page and hence $H^1(X'_0,\cG)$ is dual to a graded piece of $H^3(X_\Sigma,\QQ)$. However the latter group vanishes by \cite[Theorem~$12.3.11$(a)]{CLS11}. Similarly, $E^{3,1}_2$ is dual to the quotient of the first cohomology group of \eqref{eq:toric_complex} by the image of the canonical map
	\[
	M_\QQ \to \bigoplus_{v \in \V{\P'}} M_\QQ/\langle v \rangle
	\]
	
	This quotient vanishes by \cite[Theorem~$12.3.11$(b)]{CLS11}. 
\end{proof}

\begin{lem}
	\label{lem:global_sections}
	Given $X'_0$ and $\cF$ as above, we have an equality
	\[
	 h^0(X'_0,\cF) = \gamma(P,D) + t(P,D) - 4.
	\]
\end{lem}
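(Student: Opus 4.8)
The plan is to compute $h^0(X'_0,\cF)$ by comparing $\cF$ with the auxiliary sheaf $\cG$ and then matching the resulting cohomological data with the inverse system of Definition~\ref{dfn:gammaPD}. First I would record the short exact sequence
\[
0 \to \cF \to \cG \to \cT \to 0,
\]
where $\cT := \cG/\cF$ is the cokernel of the monomorphism supplied by Lemma~\ref{lem:injective}. Because $\cF_q \to \cG_q$ is an isomorphism away from $\xi(\Crit(f))$, the sheaf $\cT$ is supported on the image of the critical locus, and its stalks may be read off directly from the local diagrams \eqref{eq:positive} and \eqref{eq:generic}. Passing to the long exact sequence and invoking Lemma~\ref{lem:cohomology_vanishes}, which furnishes $H^0(\cG)\cong M_\QQ$ and $H^1(\cG)=0$, reduces the whole computation to the connecting map $M_\QQ \to H^0(\cT)$ together with an explicit description of $H^0(\cT)$.

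The heart of the argument is to identify this data with $\Gamma(P,D)$. Using the identification $M_\RR/\langle\sigma\rangle \cong (T\sigma^\star\cap N_\QQ)^\star$ recorded in the remark after Definition~\ref{dfn:gammaPD}, a compatible family of stalks assigns to each $2$-face $\sigma \in \Faces(P^\circ,2)$ a covector in $(T\sigma^\star\cap N_\QQ)^\star$. Diagram \eqref{eq:positive} shows that, at a positive node coming from a two-dimensional summand $m \in D(\tau^\star)$, the only relation imposed is that the covectors attached to the faces $\sigma_j$ with $\sigma_j^\star \in \Edges(\tau^\star,m)$ sum to a class in the image of $H^1(\xi^{-1}(q'),\QQ)$; diagram \eqref{eq:generic}, together with the factorisation \eqref{eq:factorise}, shows that along a one-dimensional summand the two adjacent covectors must cancel. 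These are precisely the maps dual to the inclusions $TE \hookrightarrow Tm$ that define the inverse limit, so the space of globally compatible assignments is canonically $\Gamma(P,D)$, of dimension $\gamma(P,D)$.

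It then remains to produce the two correction terms. The summand $-4$ comes from the constant sections: the image of $H^0(\cG)\cong M_\QQ$ consists of the globally defined covectors, which automatically satisfy every Minkowski relation, and the connectedness of $\partial P^\circ \cong S^3$ exploited in Lemma~\ref{lem:cohomology_vanishes} guarantees that this copy of $M_\QQ$ embeds without collapse, so it must be removed from the count. The summand $+t$ comes from the $2$-faces of $P^\circ$ disjoint from $\Delta$: exactly as in the stalk analysis of Lemma~\ref{lem:describing_H1}, each such face carries a free one-dimensional space of sections on which no node relation is imposed, contributing the $t(P,D)$ faces counted there. Combining the identification with $\Gamma(P,D)$ and these two corrections yields $h^0(X'_0,\cF) = \gamma(P,D) + t(P,D) - 4$.

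The step I expect to be the main obstacle is the globalisation in the second paragraph: verifying that the purely local relations \eqref{eq:positive} and \eqref{eq:generic} assemble, with no hidden discrepancy along the edges of $P^\circ$ where summands of distinct $D(\tau^\star)$ meet, into exactly the inverse-limit conditions of Definition~\ref{dfn:gammaPD}, and that the $-4$ and $+t$ corrections are accounted for by the connecting map in the long exact sequence rather than double-counted. Tracking this cleanly requires careful use of the chosen orientations of $\Faces(P^\circ,2)$ and $\Edges(P^\circ)$, the monotone ordering of each $D(\tau^\star)$, and the compatibility of the specialisation maps in \eqref{eq:factorise} across adjacent faces.
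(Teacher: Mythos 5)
There is a genuine gap, and it is in your very first step. The short exact sequence $0 \to \cF \to \cG \to \cT \to 0$ that organises your whole computation does not exist. Despite the wording of Lemma~\ref{lem:injective}, the monomorphism there is not into the sheaf $\cG$ with stalks $M_\QQ/\langle\sigma\rangle$ appearing in Lemma~\ref{lem:cohomology_vanishes}: at a point $q$ of a one-dimensional stratum $X_\tau$ contained in $k$ two-dimensional strata, the stalk $\cF_q$ has dimension $k-2$ or $k-3$, while $M_\QQ/\langle\tau\rangle$ is only $2$-dimensional, so no injection is possible once $k\geq 5$. The objects called ``$\cG_q$'' in diagrams \eqref{eq:positive} and \eqref{eq:generic} are the cokernels of their bottom-left arrows, i.e.\ the stalks of $\bar{\cF} := \coker\bigl(\cG \to {\iota_1}_\star\iota_1^\star \cG\bigr)$, and it is $\cF \hookrightarrow \bar{\cF}$ that Lemma~\ref{lem:injective} actually establishes. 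Your framework also fails on purely numerical grounds: if $\cF$ embedded in $\cG$, left-exactness of $H^0$ would force $h^0(X'_0,\cF) \leq h^0(X'_0,\cG) = \dim M_\QQ = 4$, whereas the formula being proved, $\gamma(P,D)+t(P,D)-4$, equals $b_2(X)-1+t(P,D)$ and routinely exceeds $4$ (e.g.\ for $P_{6,6}$ with $b_2=5$). Relatedly, the map $M_\QQ \to H^0(\cT)$ you invoke is not a connecting map but the induced map on sections, and computing its kernel could never produce a number larger than $4$.

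The correct scaffolding, which is what the paper uses, is a map of four-term exact sequences: the top row $0 \to H^0({\iota_1}_\star\iota_1^\star R^1\xi_\star\QQ) \to H^0(\cF) \to H^1(R^1\xi_\star\QQ) \to 0$ (from the defining sequence of $\cF$ and the vanishings in Proposition~\ref{pro:E2_page} and Lemma~\ref{lem:describing_H1}) maps to the row $H^0(\cG) \to H^0({\iota_1}_\star\iota_1^\star\cG) \to H^0(\bar{\cF}) \to 0$, with $H^1(X'_0,\cG)=0$ guaranteeing surjectivity on the right. Injectivity of $\cF \to \bar{\cF}$ then realises $H^0(X'_0,\cF)$ as a subspace of $H^0(X'_0,\bar{\cF}) \cong V/M_\QQ$, where $V = H^0({\iota_1}_\star\iota_1^\star\cG) \cong \bigoplus_\sigma M_\QQ/\langle\sigma\rangle$ is a large space (one line per two-dimensional face), and this is where the $-4$ genuinely comes from. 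Your second and third paragraphs --- identifying the local conditions \eqref{eq:positive} and \eqref{eq:generic} with the inverse-limit conditions defining $\Gamma(P,D)$, and the $t(P,D)$ unconstrained coordinates from faces disjoint from $\Delta$ --- are exactly the right analysis of the preimage of $\im\theta$ in $V$; they just need to be re-housed in this exact sequence rather than the one you wrote down.
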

\begin{proof}
	
	We first compare the sheaves $\cF$ and $\cG$. Let $\bar{\cF}$ denote the cokernel
	\[
	\bar{\cF} := \coker\big(\cG \to  {\iota_1}_\star\iota_1^\star \cG \big),
	\]
	and form the commutative diagram
	\[
	\xymatrix{
	 0 \ar[r] \ar[d] & H^0(X'_0, {\iota_1}_\star\iota_1^\star R^1\xi_\star\QQ) \ar[r] \ar[d] & H^0(X'_0, \cF) \ar[r]\ar^{\theta}[d]  &  H^1(X'_0, R^1\xi_\star\QQ) \ar[r]\ar[d]  & 0 \\
	H^0(X'_0,\cG) \ar[r] & H^0(X'_0, {\iota_1}_\star\iota_1^\star \cG) \ar[r] & H^0(X'_0, \bar{\cF}) \ar[r] &  0  & 
	}
	\]
	noting that $H^1(X'_0,\cG)$, $H^0(X'_0,R^1\xi_\star\QQ)$, and $H^1(X'_0, {\iota_1}_\star\iota_1^\star R^1\xi_\star\QQ)$ all vanish by Lemma~\ref{lem:cohomology_vanishes}, Proposition~\ref{pro:E2_page}, and Lemma~\ref{lem:describing_H1} respectively. By Lemma~\ref{lem:injective} and left exactness of $H^0$, $\theta$ is injective. Thus, applying Lemma~\ref{lem:cohomology_vanishes}, $H^0(X'_0,\cF)$ may be viewed as a subspace of $V/M_\QQ$, where
	\[
	V := \bigoplus_{\sigma \in \Faces(P^\circ,2)}{T_vB/T_v\sigma}
	\]
	is canonically isomorphic to $H^0(X'_0, {\iota_1}_\star\iota_1^\star \cG)$. Note that the inclusion $M_\QQ \to V$ is defined by sending $u \in M_\QQ$ to the tuple of equivalence classes $[\pi_v(u)] \in T_vB/T_v\sigma$, where $\pi_v$ is the map defining the fan structure at $v \in \V{\P'} = \V{\P}$.
	
	It remains to show that the subspace of $V$ which maps onto the image of $\theta$ has dimension $t(P,D) + \gamma(P,D)$. Any element in $V$ determines a section of $\cF$ away from $\xi(\Crit(f))$. Thus, the conditions on $V$ which express $H^0(X'_0,\cF)$ as a subspace of $V/M_\QQ$ appear from extending this section over $\xi(\Crit(f))$. 
	
	The result now follows from the analysis of the stalks of $\cF$ and $\cG$ made above Lemma~\ref{lem:injective}. Indeed, any element in $H^0(X'_0,\bar{\cF})$ determines a section of $\cF$ away from the singular locus. Fix a face $\bar{\sigma} \in \Faces(P,2)$.  The condition imposed by \eqref{eq:generic} implies that if $x \in V$ determines an element of $H^0(X'_0,\cF)$, co-ordinates of $x$ corresponding to every face $\sigma \subset \bar{\sigma}$ such that $\bar{\sigma} \cap \Delta \neq \varnothing$ coincide; let $y_{\bar{\sigma}}$ denote this value. Moreover, the conditions defining the inverse limit $\Gamma(P,D)$, are precisely the conditions imposed by positive vertices, and non-trivalent points contained in the $1$-skeleton of $P^\circ$. In other words, the subspace of $V$ which maps onto the image of $\theta$ is generated by the direct sum of a subspace of dimensional $t(P,D)$ subspace (corresponding to faces which do not meet $\Delta$) with a subspace of dimension $\gamma(P,D)$ (corresponding to the subspace $\Gamma(P,D)$ of $\{y_\rho : y_\rho \in \QQ, \rho \in \Faces(P^\circ,2)\}$).
\end{proof}

Combining the above lemmas, we complete our combinatorial description of the second Betti number of $X$. 

\begin{proof}[Proof of Theorem~\ref{thm:rank}]
	From the analysis of the Leray spectral sequence made in Proposition~\ref{pro:E2_page}, 
	\[
	h^2(X,\QQ) = h^1(X'_0, R^1\xi_\star\QQ) + 1.
	\]
	By Lemma~\ref{lem:describing_H1}, 
	\[
	h^1(X'_0,R^1\xi_\star \QQ) = h^0(X'_0,\cF) - t(P,D);
	\]
	while, by Lemma~\ref{lem:global_sections},  $h^0(X'_0,\cF) = \gamma(P,D) + t(P,D) - 4$. Thus, combining these results, $b_2(X) = \gamma(P,D) - 3$.
\end{proof}

\begin{eg}
	We compute an example of $b_2(X)$, using Theorem~\ref{thm:rank}. Let $P$ be the polytope in $N_\RR$ such that the toric variety defined by the spanning fan of $P$ is isomorphic to $\PP^2 \times \PP^2$. To fix notation, let $P_3$ the convex hull of $(1,0)$, $(0,1)$ and $(-1,-1)$; we identify $P$ with the convex hull of $P_3^1 := P_3 \times \{0\}$ and $P_3^2 := \{0\} \times P_3$. In this case $D$ is unique, as every $2$-dimensional face of $P$ isomorphic to the standard triangle, and we set $X := X(P,D)$. Note that we expect that $b_2(X) = 2$, and hence that $\gamma(P,D) = 5$.
	
	Fixing orientations of the edges of $P$ determines an isomorphism $\Gamma(P,D) \cong \QQ^{\gamma(P,D)}$. We fix specific orientations for the edges of $P$ as follows. 
	\begin{enumerate}
		\item Fix orient the edges in the polygons $P_3^1$ and $P_3^2$ clockwise.
		\item Orient every other edge from its vertex in $P_3^1$ to its vertex in $P_3^2$
	\end{enumerate}
	Number the edges and vertices of $P_3^1$ and $P_3^2$ clockwise, such that the $i$th edge contains the $i$th and $(i-1)$th vertices. Let $x_i$ and $x^i$ denote the co-ordinates on $H'$ corresponding to edges of $P_3^1$ and $P_3^2$ respectively. Let $y_{i,j}$ denote the co-ordinate on $H'$ corresponding to the edge between the $i$th vertex of $P_3^1$ and the $j$th vertex of $P_3^2$. There are $15$ variables, and $18$ equations of the form
	\begin{align*}
	y_{i,j} &= y_{i-1,j} - x_i \\
	y_{i,j} &= y_{i,j-1} + x^j
	\end{align*}
	for $i,j \in \{1,2,3\}$. First eliminate the variables $x_i = y_{i-1,i} - y_{i,i}$ and $x^i = y_{i,i} - y_{i,i-1}$. The remaining $12$ equations are equivalent to a subset of $6$ equations. Solutions to these are given by $3\times 3$ matrices $(y_{i,j})$ for which the difference $y_{i_1,j} - y_{i_2,j}$ between elements in different rows and the same column is independent of $j$. Such matrices are uniquely determined by fixing all the elements in a single row and column. Thus we obtain a five dimensional space of solutions, and hence $\gamma(P,D) = 5$ and $b_2(X)=2$.
\end{eg}
\section{Joins, and Hadamard products}
\label{sec:examples}

In forthcoming work we classify simply decomposable $4$-dimensional reflexive polytopes by querying the Kreuzer--Skarke database of all $4$-dimensional reflexive polytopes. In this section we consider various standard decompositions of the product of two integral hexagons.

\begin{lem}
	There are $28$ four dimensional polytopes which may be obtained as the product of a pair of reflexive polygons.
\end{lem}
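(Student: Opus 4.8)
The plan is to reduce the statement to an enumeration of the reflexive polygons that are themselves simply decomposable, and then to count unordered pairs of these. First I would recall the classical fact that, up to unimodular equivalence, there are exactly sixteen reflexive polygons. The key reduction is a description of the $2$-faces of a product: if $Q_1,Q_2$ are reflexive polygons then $Q_1\times Q_2$ is a $4$-dimensional reflexive polytope whose elements of $\Faces(Q_1\times Q_2,2)$ are of just two kinds, namely translates of a factor $Q_i$ and parallelograms $e_1\times e_2$ spanned by edges $e_i$ of $Q_i$. Every such parallelogram is the Minkowski sum of the segments $e_1$ and $e_2$, so it decomposes into (one-dimensional) standard simplices; hence, by Definition~\ref{dfn:sd_polytope}, $Q_1\times Q_2$ is simply decomposable if and only if each factor $Q_i$ is a Minkowski sum of standard simplices, i.e. is itself simply decomposable. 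This identifies the products in question with pairs of simply decomposable reflexive polygons.

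The main step, and the principal obstacle, is to decide which of the sixteen reflexive polygons are simply decomposable. For this I would exploit the fact that Minkowski summation adds edge measures without cancellation: encoding the boundary of a polygon as the multiset of its primitive edge directions weighted by lattice length, a polygon is a Minkowski sum of unit triangles and unit segments precisely when this measure is a non-negative integer combination of triangle triples $\{a,b,-a-b\}$ with $\lvert\det(a,b)\rvert=1$ and antipodal pairs $\{d,-d\}$. Since no auxiliary directions can appear and then cancel, this is a finite, purely local check that can be run against each of the sixteen polygons in turn (and is exactly the kind of verification the accompanying Magma code performs); carrying it out isolates the simply decomposable reflexive polygons among the full list.

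Finally I would verify that distinct unordered pairs give non-isomorphic products, so that no overcounting occurs. The only reflexive polygon that is a nontrivial direct product of positive-dimensional lattice polytopes is the square $[-1,1]^2=[-1,1]\times[-1,1]$; every other simply decomposable reflexive polygon is indecomposable for the direct product. By uniqueness of the factorisation of a lattice polytope into direct-product-indecomposables, the map sending a pair to its product is injective on this set, and counting the admissible pairs then yields $28$. The delicate point throughout is the middle paragraph: it is there that the precise number is pinned down, and care is needed both to apply the edge-measure criterion correctly to the non-smooth reflexive polygons and to confirm that the final tally of pairs comes out to exactly $28$.
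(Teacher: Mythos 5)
Your overall route is the same as the paper's: reduce to the question of which reflexive polygons are simply decomposable, then count unordered pairs with repetition. Your two supporting observations are correct and are in fact spelled out more carefully than in the paper, which compresses them to one line: (a) the $2$-faces of $Q_1\times Q_2$ are copies of the $Q_i$ together with products of edges, and the latter are automatically Minkowski sums of unit segments, so the product is s.d.\ exactly when both factors are; (b) distinct unordered pairs give inequivalent products (the paper does not address this at all, and your appeal to unique direct-product factorisation, with the square $[-1,1]^2$ as the only decomposable reflexive polygon, is a reasonable way to settle it).

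The genuine gap is that the decisive step is never carried out: you never determine, or even state, how many of the sixteen reflexive polygons pass the test. The entire numerical content of the lemma is that exactly \emph{seven} of them are simply decomposable, whence $\binom{7+1}{2}=28$ unordered pairs; the paper asserts this count from the classification (and displays the seven polygons in a figure), whereas your write-up says only that the edge-measure check ``isolates'' the right subclass and then declares that the tally ``comes out to exactly $28$''. Your proposed criterion -- partitioning the multiset of primitive edge directions, weighted by lattice length, into unimodular zero-sum triples and antipodal pairs -- is sound and would work, but it has to be run on all sixteen polygons, and the outcome is not a formality: for instance the triangle $\operatorname{conv}((1,0),(0,1),(-1,-1))$ is reflexive, yet all three of its edges have lattice length one while its normalised volume is $3$, so it cannot be a Minkowski sum of standard simplices (a triangle that is such a sum with unit-length edges must be a single unimodular triangle); nine of the sixteen fail in this way. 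Until the enumeration is actually performed and shown to yield seven survivors, the figure $28$ is asserted rather than proved.
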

\begin{proof}
	The product of two lattice polygons is reflexive if and only if both polygons are themselves reflexive. It follows directly from the classification of ($16$) reflexive polygons that there are seven s.d.\ reflexive polygons; thus $28$ s.d.\ reflexive polytopes are the product of a pair of polygons.
\end{proof}

\begin{figure}
	\includegraphics[scale=0.3]{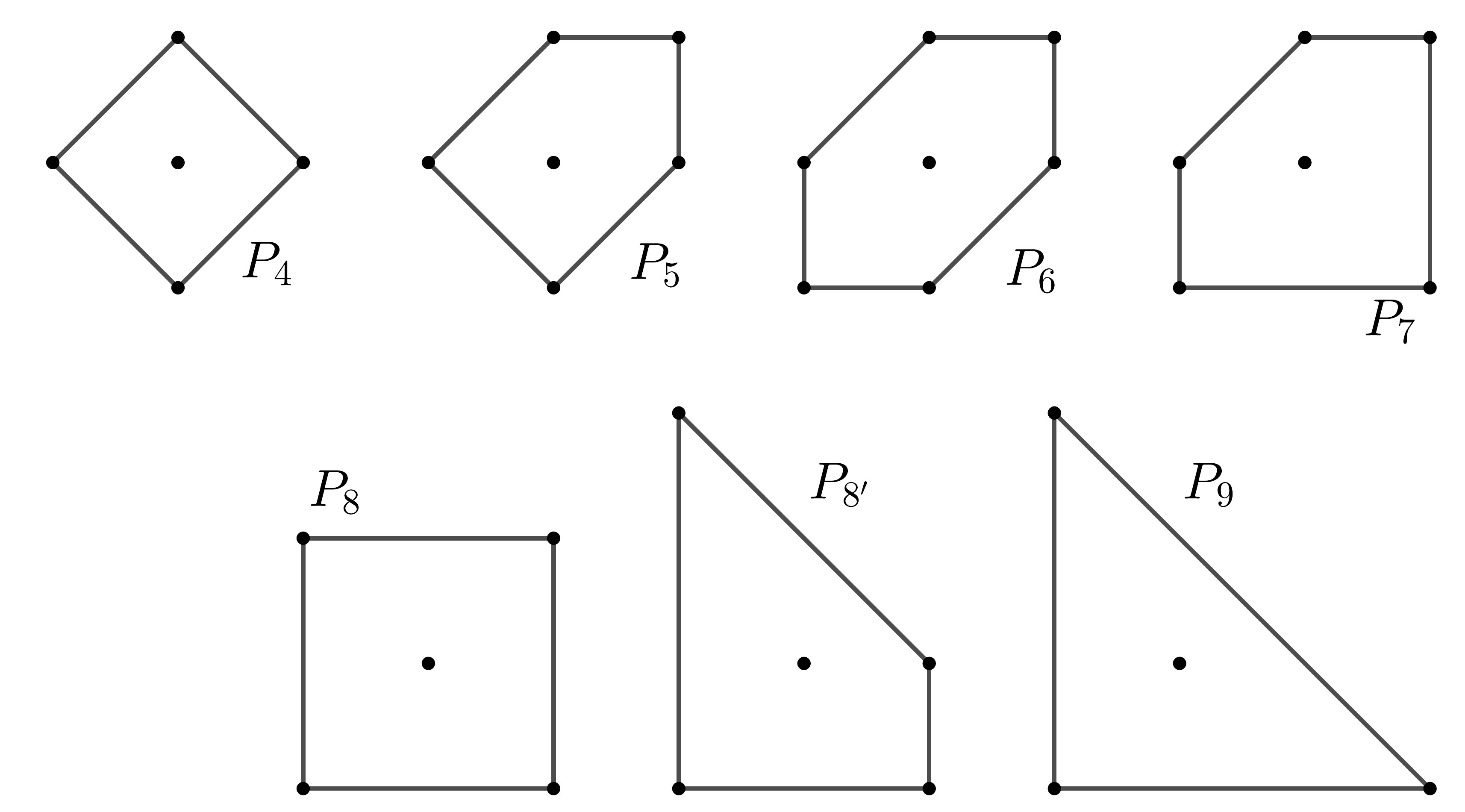}
	\caption{List of s.d.\ reflexive polygons.}
	\label{fig:reflexives}
\end{figure}

Let $P_k$ be the polygons displayed in Figure~\ref{fig:reflexives} and let $P_{k_1,k_2}$ denote the product $P_{k_1} \times P_{k_2}$. The toric varieties associated to these polytopes (via their \emph{spanning} or \emph{face} fan) are known to smooth in some cases.

\begin{eg}
	\label{eg:ci33}
	Setting $P := P_{9,9}$, we note that the toric variety $X_P$ is given by the equations
	\begin{align*}
	x_1x_2x_3 = x_0^3 && x_4x_5x_6 = x^3_0
	\end{align*}
	in $\PP^6$, the anti-canonical model of $X_P$. A general anti-canonical (hyperplane) section is then given by a $(3,3)$ complete intersection in $\PP^5$. We note that such a complete intersection $X$ has $b_2(X) = 1$ and $\chi(X) = -144$. We verify the compatibility of these statements with the results of \S\ref{sec:topology}. Note that there is a unique choice of Minkowski decompositions $D$ of the $2$-dimensional faces of $P$.
	
	The value $\chi(X)$ is given in Proposition~\ref{pro:euler_number}. There are a total of $18$ standard simplices in $D$. The number of negative vertices in $B(P,D)$ is equal to $9 \times \#\Edges(P) = 9 \times 18 = 162$. Hence observe that $-144 = 18 - 162$, as expected.
	
	To compute $b_2(X)$, we assign variables to the elements of $\Edges(P)$, and compute $\gamma(P,D)$ by imposing conditions associated to each $2$-dimensional face of $P$. Index the edges and vertices of $P_9$ clockwise with elements from $\{1,2,3\}$. Let $x_{i,j}$ be the variable corresponding to the product of the $i$th edge and $j$th vertex, and let $x^{i,j}$ be the variable corresponding to the product of the $j$th vertex and $i$th edge. The conditions imposed by the Minkowski factors of each element of $\Faces(P,2)$ depend on an orientation of the edges of $P$. We fix a cyclic orientation of the edges of $P_9$; this induces an orientation of every edge of $P = P_{9,9}$. The conditions imposed at the product of two edges are of the form $x_{i,j} = x_{i,j+1}$ and $x^{i,j} = x^{i,j+1}$, interpreted cyclically. Writing $x_i := x_{i,j}$ for all $j \in \{1,2,3\}$ and $x^i := x^{i,j}$ for all $j \in \{1,2,3\}$, the conditions imposed at triangular faces take the form
	\begin{align*}
	x_1+x_2+x_3 = 0 && x^1+x^2+x^3=0
	\end{align*}
	resulting in a $4$-dimensional space of solutions $\Gamma(P,D)$, and -- applying Theorem~\ref{thm:rank} -- we conclude that $b_2(X) = 1$.
\end{eg}

\begin{eg}
	\label{eg:other_egs}
	An analysis similar to that used in Example~\ref{eg:ci33} applies in a number of other cases;
	\begin{enumerate}
		\item The toric varieties determined by the spanning fans of $P_{8,9}$ and $P_{8',9}$ are $(2,2,3)$ complete intersections in $\PP^6$.
		\item  The toric varieties determined by the spanning fans of $P_{8,8}$, $P_{8,8'}$, and $P_{8',8'}$ are $(2,2,2,2)$ complete intersections in $\PP^7$.
		\item The toric variety determined by the spanning fan of $P_{7,9}$ smooths to a $(1,1,3)$ complete intersection in $\Gr(2,5)$.
		\item The toric variety determined by the spanning fan of $P_{7,8}$ smooths to a $(1,2,2)$ complete intersection in $\Gr(2,5)$.
	\end{enumerate}
	In all cases we may verify that Proposition~\ref{pro:euler_number} and Theorem~\ref{thm:rank} reproduce the expected invariants. Note that these examples are (somewhat trivial) instances of smoothing \emph{joins} of elliptic curves of degrees $3$, $4$, and $5$. 
\end{eg}

In fact several other possible examples have been very recently realised via constructions of smoothing components of joins of del Pezzo manifolds; see \cite{G15,I19,KS19}. For example, letting $P:= P_{7,7}$, and noting $D$ is uniquely determined by $P$, the space $X(P,D)$ has $(b_2,\chi) = (1,-100)$. Moreover, $\Vol(P^\circ) = 25$, implying that $H^3 = 25$ for generator $H$ of $H^2(X(P,D),\QQ)$. The ideal cutting out the toric variety $X_P$ in its anti-canonical embedding is the sum of ideals generated by $4\times 4$ Pfaffians. In other words, a linear section in the `self-intersection' of $\Gr(2,5)$.

In general, we predict that the examples obtained by setting $P = P_{k_1,k_2}$ are various smoothing components of the join of anti-canonical sections in a del~Pezzo surfaces of degree $12-k_1$ and $12-k_2$ respectively. For example, considering pairs $(P_{6,7},D)$, we obtain smoothing components corresponding to the smoothing components of $\join(\Gr(2,5),\PP^2 \times \PP^2)$ and $\join(\Gr(2,5),\PP^1 \times \PP^1 \times \PP^1)$ obtained in \cite{I19} by Inoue. These correspond to standard decompositions $D$ in which every hexagonal face is decomposed into line segments ($\PP^2\times \PP^2$) or triangles ($\PP^1\times \PP^1\times \PP^1$). Similarly, the smoothing component of $\join(\Gr(2,5),\Bl_{pt}\PP^3)$ described in \cite{I19} corresponds -- in our context -- to $(P_{5,7},D)$, in which $D$ is the unique standard decomposition of $P_{5,7}$. We refer to Tables~\ref{tbl:P6k} and \ref{tbl:Psingle} for further examples.

In the tables below we analyse all possible examples of the form $(P_{k_1,k_2},D)$; and hence -- at least conjecturally -- describe all smoothing components of all possible joins of elliptic curves with degree between $3$ and $8$. While still a small subclass of s.d.\ polytopes, this class contains a significant number of new examples. For example, there are $72$ possible choices of $D$ for the polytopes $P_{5,6}$, which carry an action of the automorphism group $C_2 \times D_{12}$. In fact the $D_{12}$ factor acts trivially; hence the possible choices for $D$ are given by the orbits of a $C_2$ action. See Table~\ref{tbl:P65}.

We now consider the polytope $P := P_{6,6}$ in greater detail. Geometrically, an anti-canonical hypersurface in the toric variety $X_P$ associated to $P$ has $48$ singular points. Of these, $36$ are ordinary double points, while $12$ are given by the anti-canonical cone on the (rigid) del Pezzo surface $dP_6$. The latter singularities are known to admit a pair of smoothings (corresponding to embedding $dP_6$ in either $\PP^2\times\PP^2$ or $\PP^1\times\PP^1\times\PP^1$).

\begin{rem}
	It is not known when such singularities may be simultaneously smoothed, and it would be interesting to describe the notion of regularity given in Definition~\ref{dfn:regular} in this case in terms similar to the homological conditions of Friedman \cite{F91} and Tian \cite{T92}.	
\end{rem}

Since each hexagonal face admits a pair of possible Minkowski decompositions, there are $2^{12}$ possible choices of Minkowski decompositions $D$. Let $\cD$ denote this set of possible decompositions. The automorphism group of $P$ is the wreath product $G := D_{12} \wr C_2$, a group of order $288$. The group $G$ acts on $\cD$, resulting in a set $91$ orbits. To describe these orbits we index the vertices of $P_6$ with numbers $[6] := \{1,\ldots,6\}$. Each hexagonal face of $P_{6,6}$ is either equal to $v_i \times P_6$ or $P_6 \times v_i$ for some $i \in [6]$. We record $D \in \cD$ as a pair $(A_1,A_2)$ of subsets of $[6]$. The first of these contains indices $i$ such that $D$ assigns $v_i \times P_6$ its Minkowski decomposition into a pair of triangles. The second contains indices $i$ such that $D$ assigns $P_6 \times v_i$ its Minkowski decomposition into a pair of triangles.

Writing $n_i$ for the number of elements in $A_i$ for each $i \in \{1,2\}$, note that $G$ acts on the subsets of $\cD$ with fixed values of $n_1$ and $n_2$. We tabulate the number of orbits in each case, were $n_1$ and $n_2$ are given by the row and column index respectively in Table~\ref{tbl:orbits}.

\begin{table}
\[
\begin{array}{c|ccccccc}
& 0 & 1 & 2 & 3 & 4 & 5 & 6 \\ \hline
0 & 1 & 1 & 3 & 3 & 3 & 1 & 1 \\ 
1 & & 1 & 3 & 3 & 3 & 1 & 1 \\
2 & & & 6 & 9 & 9 & 3 & 3 \\
3 & & & & 6 & 9 & 3 & 3 \\
4 & & & & & 6 & 3 & 3 \\
5 & & & & & & 1 & 1 \\
6 & & & & & & & 1 \\
\end{array}
\]
\caption{Orbits of $G$ on $\cD$}
\label{tbl:orbits}
\end{table}
We now consider the topological invariants of the spaces $X(P,D)$ as $D$ ranges over all possible sets of Minkowksi decompositions in $\cD$.

\begin{pro}
	\label{pro:eg_invariants}
	Let $P := P_{6,6}$, and let $D \in \cD$ be determined by a pair $(A_1,A_2)$ of subsets of $[6]$. We have that $\chi(X(P,D)) = 2n_1+2n_2 - 72$. The second Betti number admits the following possibilities.
	\begin{enumerate}
		\item $b_2 = 5$ if $n_1=n_2=6$,
		\item $b_2 = 4$ if $\{n_1,n_2\}=\{0,6\}$,
		\item $b_2 = 3$ if $\{n_1,n_2\}=\{0,0\}$ or $\{k,6\}$ for $k \notin \{0,6\}$,
		\item $b_2 = 2$ if $\{n_1,n_2\}=\{0,k\}$ for $k \notin \{0,6\}$,
		\item $b_2 = 1$ in all other cases.
	\end{enumerate}
Thus we obtain $22$ topological types of $X(P,D)$; the invariants of which are displayed in Table~\ref{tbl:eg_invariants}.
\end{pro}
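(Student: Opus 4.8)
The plan is to prove the Euler characteristic formula and then the case analysis for $b_2$ separately, treating $\chi$ as an immediate application of Proposition~\ref{pro:euler_number} and reserving the real work for $b_2$. First I would compute $\chi(X(P,D))$: by Proposition~\ref{pro:euler_number}, $\chi$ is the number of two-dimensional factors in $D$ minus $\sum_{\sigma}\ell(\sigma^\star)^2\Vol(\sigma)$. For $P=P_{6,6}$ every hexagonal face has $\ell(\sigma^\star)=1$ and normalised volume $6$, and there are $12$ such faces, giving the negative contribution $72$. A factor of $D(\rho)$ is two-dimensional precisely when the corresponding hexagon is decomposed into a pair of triangles, and the number of such hexagons is exactly $n_1+n_2$; each contributes two triangles, so the positive term is $2(n_1+n_2)$. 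Hence $\chi=2n_1+2n_2-72$, as claimed.

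The heart of the proposition is the $b_2$ computation, which I would carry out via Theorem~\ref{thm:rank}, reducing everything to computing $\gamma(P,D)=\dim\Gamma(P,D)$ and then setting $b_2=\gamma(P,D)-3$. The strategy is to set up explicit coordinates exactly as in Example~\ref{eg:ci33}: assign a variable to each edge of $P=P_6\times P_6$, impose the linear relations coming from each two-dimensional face, and count the dimension of the solution space. The edges split into three families — products of an edge of the first hexagon with a vertex of the second, products of a vertex of the first with an edge of the second, and products of a vertex with a vertex (the ``diagonal'' edges joining the two hexagonal factors). The relations at a product of two edges force the diagonal-type variables to be locally constant, while the relations at each hexagonal face encode the chosen Minkowski decomposition $(A_1,A_2)$: a hexagon decomposed into line segments imposes one set of linear conditions on the edge-variables, whereas a hexagon split into two triangles imposes a different, generically more restrictive, set. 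Because the index sets $A_1$ and $A_2$ only record which hexagons are split into triangles, the dimension of $\Gamma(P,D)$ should depend only on the pair $(n_1,n_2)$ together with the extreme cases where one of $n_1,n_2$ equals $0$ or $6$, matching the five stated regimes.

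The hard part will be pinning down exactly how the count of independent relations jumps at the boundary values $n_i\in\{0,6\}$, and verifying that no extra degeneration occurs for intermediate $n_i$. I expect the mechanism to be that when a hexagonal factor in one direction is \emph{never} split into triangles (say $n_1=0$) the corresponding block of relations decouples and contributes an extra dimension of solutions, and symmetrically when \emph{every} hexagon in a direction is split ($n_i=6$) a global compatibility is forced that either adds or removes a dimension. Concretely I would first solve the all-segments case $n_1=n_2=0$ and the all-triangles case $n_1=n_2=6$ by hand, obtaining $\gamma=6$ (so $b_2=3$) and $\gamma=8$ (so $b_2=5$) respectively, then track how individual triangle-splittings perturb the rank of the relation matrix. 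The key linear-algebra claim to establish is that the rank stabilises once at least one but not all hexagons in each direction are split, yielding the uniform value $b_2=1$ in the generic regime; the remaining cases $\{n_1,n_2\}=\{0,6\}$, $\{0,0\}$, $\{k,6\}$, and $\{0,k\}$ are then read off as the mixed boundary behaviours. Once the dimension of $\Gamma(P,D)$ is determined in each regime, the enumeration of $22$ topological types follows by combining the five $b_2$-values with the distinct values of $\chi=2n_1+2n_2-72$, and the detailed invariants are recorded in Table~\ref{tbl:eg_invariants}.
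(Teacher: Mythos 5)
Your overall strategy is the paper's: $\chi$ via Proposition~\ref{pro:euler_number} and $b_2$ via Theorem~\ref{thm:rank} together with the explicit edge-variable linear algebra of Example~\ref{eg:ci33}. But there are two concrete errors in your setup, and the decisive computation is left undone. First, the negative term in $\chi$ is a sum over $\Faces(P^\circ,2)$, not over the hexagonal faces of $P$: the $72$ two-dimensional faces of $P^\circ$ are dual to the $72$ edges of $P$, each a standard triangle with $\ell(\sigma^\star)=1$ and $\Vol=1$, so the sum is $72\cdot 1$. Your ``$12$ hexagonal faces of normalised volume $6$'' lands on $72$ only by numerical accident; the $12$ hexagons are $2$-faces of $P$, dual to \emph{edges} of $P^\circ$, and they index the positive term (they carry the Minkowski data), not the negative one. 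Second, $P_{6,6}$ is a \emph{product}, so its edges are exactly the $36+36$ faces of the form $(\text{edge})\times(\text{vertex})$ and $(\text{vertex})\times(\text{edge})$; a vertex times a vertex is a vertex of $P$, and there is no third ``diagonal'' family. You appear to have imported the structure of the $\PP^2\times\PP^2$ example at the end of \S\ref{sec:compute_b2}, where $P$ is the convex hull of $P_3\times\{0\}$ and $\{0\}\times P_3$ (a free sum, not a product) and the $y_{i,j}$ really are vertex-to-vertex edges.

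The heart of the proposition is the five-case analysis, and your proposal defers it (``the key linear-algebra claim to establish''). What is actually needed: after the square faces identify $x_{i,j}$ with $x_{i,k}$, one is left with $12$ variables $x_1,\dots,x_6,x^1,\dots,x^6$; a hexagon decomposed into three line segments imposes $x_1+x_4=x_2+x_5=x_3+x_6=0$ (rank $3$), a hexagon decomposed into two triangles imposes $x_1+x_3+x_5=x_2+x_4+x_6=0$ (rank $2$), and in the mixed case the union of all five relations has rank exactly $4$ (they satisfy the single dependence that the sum of the first three equals the sum of the last two). This gives solution dimension $3$, $4$, or $2$ per factor in the three regimes, whence $\gamma(P,D)\in\{4,\dots,8\}$ and the five listed values of $b_2=\gamma(P,D)-3$. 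Your values $\gamma=6$ and $\gamma=8$ for the two extreme cases are correct, but the assertion that $n_i=6$ ``either adds or removes a dimension'' relative to the mixed case is too vague --- it adds two --- and without the rank-$4$ computation the boundary cases $\{0,k\}$ and $\{k,6\}$, which is where the answer actually varies, are not established.
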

\begin{proof}
	The calculation of $\chi(X(P,D))$ is a direct application of Proposition~\ref{pro:euler_number}, similar to that appearing in Example~\ref{eg:ci33}.	We also compute the second Betti number of $X(P,D)$ using the approach taken in Example~\ref{eg:ci33}.
	
	Assigning variables $x_{i,j}$ and $x^{i,j}$ (the product of the $i$th edge and $j$th vertex, or $j$th vertex and $i$th edge) to each of $72$ edges for $i,j \in [6]$. We again observe that relations obtained from square faces of $P$ imply that $x_{i,j} = x_{i,k}$ for all $i, k \in [6]$. Hence we have a $12$ dimension solution space generated by variables $x^i$ and $x_i$ for $i \in [6]$ before applying relations coming from hexagonal faces. We have three cases.
	\begin{enumerate}
		\item If $D$ assigns every face $P_6 \times v_i$ for $i \in [6]$ the Minkowski decomposition into a triple of line segments, the relations imposed on variables $x_i$ have the form
		\begin{align*}
		x_1+x_4 = 0 && x_2+x_5 =0 && x_3+x_6=0. 
		\end{align*}
		\item If $D$ assigns every face $P_6 \times v_i$ for $i \in [6]$ the Minkowski decomposition into a pair of triangles, the relations imposed on variables $x_i$ have the form
		\begin{align*}
		x_1+x_3+x_5 = 0 && x_2+x_4+x_6=0. 
		\end{align*}
		\item If $D$ assigns at least one face $P_6 \times v_i$ for $i \in [6]$ to each possible Minkowski decomposition we impose all five of the above equations on $\{x_i : i \in [6]\}$. Note that these five equations are not independent, and the subspace of solutions to these equations has codimension $4$.
	\end{enumerate}
	Applying similar conditions to the variables $x^i$ for $i \in [6]$, we obtain the list presented in the statement of Proposition~\ref{pro:eg_invariants}. Note that this calculation is easy to generalise to any $(P_{k_1,k_2},D)$, a calculation used to compute entries in the columns titled $b_2$ in Appendix~\ref{sec:tables}.
\end{proof}

\begin{table}
\[
		\begin{array}{c|ccccccc}
		  & 0 & 1 & 2 & 3 & 4 & 5 & 6 \\ \hline
		0 & (3,-72) & (2,-70) & (2,-68) & (2,-66) & (2,-64) & (2,-62) & (4,-60) \\ 
		1 & & (1,-68) & (1,-66) & (1,-64) & (1,-62) & (1,-60) & (3,-58) \\
		2 & & & (1,-64) & (1,-62) & (1,-60) & (1,-58) & (3,-56) \\
		3 & & & & (1,-60) & (1,-58) & (1,-56) & (3,-54) \\
		4 & & & & & (1,-56) & (1,-54) & (3,-52) \\
		5 & & & & & & (1,-52) & (3,-50) \\
		6 & & & & & & & (5,-48) \\
		\end{array}
\]
		\caption{Invariants $(b_2,\chi)$ of $X(P,D)$ for each $D \in \cD$.}
		\label{tbl:eg_invariants}
\end{table}
So far our analysis has only concerned the topology of the space $X(P,D)$, which can done irrespectively of the regularity of $(P,D)$. We now determine which of these topological types can be realised with a regular pair $(P,D)$, and hence correspond to genuine Calabi-Yau threefolds. This is achieved using an implementation of the method described in \S\ref{sec:sd-poytopes} in computer algebra, and we include our source file as supplementary material. Note that, by Proposition~\ref{pro:irregular}, we only need to verify the existence of a strictly convex consistent slope function. Indeed, all $2$-dimensional faces $\sigma$ of $P^\circ$ are standard simplices and $\sigma^\star$ always has lattice length equal to one. The result of these computations is stated as Proposition~\ref{pro:regular_examples}.

\begin{pro}
	\label{pro:regular_examples}
	The pair $(P,D)$ is regular unless either $n_1$ or $n_2$ belongs to $\{1,5\}$.
\end{pro}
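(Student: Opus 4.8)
The plan is to lean on the reduction already in place: since every $2$-dimensional face $\sigma$ of $P^\circ$ is a standard simplex and $\ell(\sigma^\star)=1$, the polygon $\bar\sigma=\ell(\sigma^\star)\sigma$ is a standard triangle, so by Proposition~\ref{pro:irregular} no empty polygon in any $T_0(\bar\sigma)$ can fail to be standard. Hence regularity of $(P,D)$ is equivalent to the existence of a consistent (Definition~\ref{dfn:consistent}) and strictly convex (Definition~\ref{dfn:strictly_convex}) slope function $V$. As the strict-convexity inequalities cut out a Zariski-dense open set, such a $V$ exists if and only if the linear solution space $S$ of the consistency equations is not contained in any of the finitely many collision hyperplanes $\{V(m)=V(m')\}$ with $m\neq m'$ in a common $D(\rho)$; equivalently, if and only if consistency forces no such collision. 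The whole problem thus becomes one of deciding, as a function of $(A_1,A_2)$, whether such a collision is forced.

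First I would record the combinatorics: the edges of $P=P_6\times P_6$ are $e_i\times v_j$ and $v_i\times e_j$ with $i,j\in[6]$, each lying in exactly two square faces and one hexagonal face, so every dual $\sigma$ is a triangle whose consistency equation relates one summand of each of these three faces. The decisive structural point is that the system \emph{decouples}: the equation at $e_i\times v_j$ involves only the $e_i$-parallel segment summands $a_{ij}$ of the squares $e_i\times e_j,\,e_i\times e_{j+1}$ together with the summand of the first-factor hexagon $P_6\times v_j$ (whose type is governed by $j\in A_2$), while the equation at $v_i\times e_j$ involves the disjoint set of $e_j$-parallel summands $b_{ij}$ and the second-factor hexagon (governed by $i\in A_1$). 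Consistency therefore splits into two independent subsystems, one per factor, coupled only through the square collisions $V(a_{ij})\neq V(b_{ij})$, which are generic and can be arranged once each subsystem is individually solvable. It suffices to analyse a single subsystem attached to a subset $A\subseteq[6]$ and prove it admits a strictly convex consistent solution iff $|A|\notin\{1,5\}$.

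Next I would eliminate the square summands. For fixed $i$ the equations for $e_i\times v_j$, $j\in[6]$, form a cyclic chain in $a_{i,1},\dots,a_{i,6}$; solving it leaves a single compatibility condition on the six matched hexagon values $h_{ij}$, which I expect to be $\sum_j h_{ij}=0$. Writing $\beta^{(j)}_1,\beta^{(j)}_2,\beta^{(j)}_3$ for the three segment values when $j\notin A$ and $\gamma^{(j)}_+,\gamma^{(j)}_-$ for the two triangle values when $j\in A$, and using that opposite edges $e_i,e_{i+3}$ are matched with the same segment but with triangles of opposite parity, the six compatibility conditions collapse to $B_1=B_2=B_3$ and $G_+=G_-$ (together with a tie $B_1=-G_+$), where $B_k=\sum_{j\notin A}\beta^{(j)}_k$ and $G_\pm=\sum_{j\in A}\gamma^{(j)}_\pm$. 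The strict-convexity constraints are $\gamma^{(j)}_+\neq\gamma^{(j)}_-$ for $j\in A$ and $\beta^{(j)}_1,\beta^{(j)}_2,\beta^{(j)}_3$ pairwise distinct for $j\notin A$. When $|A|=1$ the equality $G_+=G_-$ reads $\gamma^{(j_0)}_+=\gamma^{(j_0)}_-$, directly contradicting strict convexity; when $|A|=5$ the equalities $B_1=B_2=B_3$ read $\beta^{(j_0)}_1=\beta^{(j_0)}_2=\beta^{(j_0)}_3$, again a contradiction. For $|A|\in\{0,2,3,4,6\}$ every forced sum has either zero or at least two terms, so the aggregate equalities impose no relation internal to a single hexagon, and I would exhibit an explicit strictly convex solution (choosing the gaps $\gamma^{(j)}_+-\gamma^{(j)}_-$ to cancel in pairs, the distinct triples $\beta^{(j)}$ to have equal column sums, and fixing the residual shifts to meet both the tie and the square collisions). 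Combining the two subsystems yields regularity precisely when $n_1,n_2\notin\{1,5\}$.

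The main obstacle will be pinning down the orientation signs so that the eliminated system genuinely takes the balanced form above: one must verify that the coefficient of each $h_{ij}$ in the compatibility condition is independent of $j$, and that the conditions for $i$ and $i+3$ carry matching coefficients on the shared segment value in the common direction of $e_i$ and $e_{i+3}$, so that their difference isolates $G_+-G_-$. Concretely this amounts to producing the explicit rational combination of the consistency equations that annihilates all square and all $\beta$- (resp. $\gamma$-) summands and returns a nonzero multiple of $\gamma^{(j_0)}_+-\gamma^{(j_0)}_-$ (resp. of a difference $\beta^{(j_0)}_k-\beta^{(j_0)}_{k'}$). Its existence is exactly what the \sgn-bookkeeping must confirm, and it is this combination that the accompanying computer-algebra implementation checks directly.
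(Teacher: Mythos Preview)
The paper's own proof is not a theoretical argument at all: it is a direct computer-algebra verification (Magma code included as supplementary material), preceded only by the observation---which you also make---that since every $2$-face $\sigma$ of $P^\circ$ is a standard triangle with $\ell(\sigma^\star)=1$, Proposition~\ref{pro:irregular} reduces regularity to the existence of a consistent strictly convex slope function.

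Your proposal is therefore a genuinely different, more structural approach. The decoupling you describe is correct and is the key insight: the consistency equations at edges $e_i\times v_j$ involve only the $e_i$-parallel square summands and the hexagons $P_6\times v_j$ (governed by $A_2$), while those at $v_i\times e_j$ involve the disjoint family governed by $A_1$; the two blocks are linked only by the open conditions $a_{i,j}\neq b_{i,j}$, which can indeed be arranged a posteriori using the one-parameter shift freedom in each cyclic chain. The mechanism you propose for the obstruction---that when $|A|=1$ (resp.\ $|A|=5$) an aggregate equality $G_+=G_-$ (resp.\ $B_1=B_2=B_3$) collapses to a single term and forces a collision---is the right explanation of the computer's output.

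That said, as written this is an outline rather than a proof, and you flag the reason yourself. The entire obstruction argument hinges on the compatibility taking the precise form $G_+=G_-$ rather than, say, $G_++G_-=0$: if the $\sgn(\sigma,\tau)$ bookkeeping produced the latter, then $|A|=1$ would impose $\gamma^{(j_0)}_+=-\gamma^{(j_0)}_-$, which is \emph{not} a strict-convexity violation. The phrases ``I expect'' and ``the main obstacle will be pinning down the orientation signs'' are honest, but they mean that neither the non-existence direction (for $|A|\in\{1,5\}$) nor the explicit construction (for $|A|\in\{0,2,3,4,6\}$) is actually established. In short: your route, once the sign computation is carried out and the explicit solutions are written down, would supersede the paper's bare computation and explain \emph{why} the pattern $\{1,5\}$ appears; but the paper's proof is complete as stated, and yours is not yet.
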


Geometrically, this means we can conjecture the existence (and non-existence) of certain smoothing components for anti-canonical sections of the toric variety $X_P$. Recall these have $12$ singularities, which we label
\[
\{p_1,\ldots p_6,p^1,\ldots,p^6\},
\]
equal to the cone on the toric del Pezzo surface $dP_6$. Locally we can choose one of two smoothing components at each of these $12$ singular points. Following Altmann~\cite{A97}, these are in canonical bijection with the Minkowski decompositions of a hexagon.

\begin{conjecture}
	Fix local smoothing components for the $12$ singularities of a general anti-canonical hypersurface $X \subset X_P$ locally isomorphic to the cone on $dP_6$. These choices determine a smoothing of $X$, simultaneously smoothing each of these $12$ singularities in the chosen component, unless precisely five of the six smoothing components chosen for the singularities $\{p_1,\ldots, p_6\}$ or $\{p^1,\ldots,p^6\}$ coincide. 
\end{conjecture}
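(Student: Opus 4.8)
The plan is to recognise this conjecture as the geometric incarnation of Proposition~\ref{pro:regular_examples}, and to prove it by translating the combinatorial regularity criterion into a statement about smoothing components. The first step is to set up the dictionary, due to Altmann~\cite{A97}, between the local smoothing components of the cone on $dP_6$ and the Minkowski decompositions of a hexagon: a choice of local component at each of the twelve singular points $\{p_1,\ldots,p_6,p^1,\ldots,p^6\}$ is the same as a choice of standard decomposition $D$ of the hexagonal faces of $P = P_{6,6}$. Recording this choice as a pair $(A_1,A_2)$ of subsets of $[6]$ exactly as in the discussion preceding Proposition~\ref{pro:eg_invariants}, the condition that \emph{precisely five} of the six components chosen among $\{p_1,\ldots,p_6\}$ (respectively $\{p^1,\ldots,p^6\}$) coincide is precisely the condition $n_1 \in \{1,5\}$ (respectively $n_2 \in \{1,5\}$), where $n_i = |A_i|$. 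With this translation in hand the conjecture reduces to two implications connecting regularity of $(P,D)$ to the existence of a simultaneous smoothing realising the prescribed local components.

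For the existence direction I would argue as follows. When $(P,D)$ is regular, Theorem~\ref{thm:main_cons} (equivalently Theorem~\ref{thm:main_cons_2}) produces a polarised toric log Calabi--Yau space $X_0(P,D)$, and hence, via \cite[Theorem~$1.30$]{Gross--Siebert} and \cite[Theorem~$4.4$]{RS:draft}, a smoothing over a disc which simultaneously resolves all $48$ singularities of the anti-canonical hypersurface $X \subset X_P$ (the $36$ ordinary double points along their unique local component). The remaining point, and the genuinely geometric content of this direction, is to check that near each $dP_6$-cone singularity this global smoothing restricts to the \emph{local} component prescribed by $D$. This amounts to matching the degeneration data carried by $B(P,D)$ near the barycentre of each hexagonal face with Altmann's Minkowski-sum description of the versal deformation of the cone; the triangulation of $\ell(\sigma^\star)\sigma$ induced by the slope function records exactly which Minkowski summand is selected, so the identification should be forced by construction.

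The non-existence direction is the main obstacle. Here one must show that when $(P,D)$ is irregular --- that is, when $n_1$ or $n_2$ lies in $\{1,5\}$ --- there is genuinely \emph{no} smoothing of $X$ inducing the chosen local components, not merely that our particular construction fails to supply one. By Proposition~\ref{pro:regular_examples} irregularity is equivalent to the non-existence of a consistent, strictly convex slope function, and Proposition~\ref{pro:irregular} pins down the combinatorial reason: an empty lattice square persists in some face triangulation. The task is to promote this combinatorial obstruction to a cohomological obstruction to deformation, of Friedman--Tian type \cite{F91,T92}, identifying the failure of the polarisation condition with a non-vanishing class governing the simultaneous smoothability of the twelve cone singularities, as anticipated in the remark following Proposition~\ref{pro:regular_examples}. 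Establishing this equivalence --- that regularity is not only sufficient but necessary --- is precisely what keeps the statement at the level of a conjecture, and is the step I would expect to require the most work.
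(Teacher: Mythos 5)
This statement is a \emph{Conjecture} in the paper and carries no proof; the only supporting material the paper supplies is the encoding of the choice of local components as a pair $(A_1,A_2)$ of subsets of $[6]$ together with the computer-verified Proposition~\ref{pro:regular_examples}, which says $(P,D)$ is regular exactly when neither $n_1$ nor $n_2$ lies in $\{1,5\}$. Your proposal reconstructs precisely this motivation --- the Altmann dictionary, the identification of ``precisely five of six coincide'' with $n_i\in\{1,5\}$, and the appeal to Theorem~\ref{thm:main_cons} in the regular case --- and you correctly isolate the two steps that remain open: verifying that the Gross--Siebert smoothing realises the prescribed Altmann component at each cone point (together with the expected but unproven passage between $X_0(P,D)$ and the hypersurface $X\subset X_P$ itself), and upgrading combinatorial irregularity to a genuine Friedman--Tian-type obstruction rather than a failure of this particular construction. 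Since neither you nor the paper closes these gaps, your account is an accurate explanation of why the statement is a conjecture, not a proof of it, and you are right to present it as such.
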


\subsection{Hadamard products}
\label{sec:Hadamard}

Consider the pair $(P,D)$, where $P = P_{6,6}$, and $D$ is the set of Minkowski decompositions of all the $2$-dimensional faces of $P$ into line segments. We have seen that $(P,D)$ is regular, and that $\chi(X(P,D)) = -72$. If $b_2(X(P,D))$ were equal to $1$, we would have that $H^3 = 36$ for a generator $H$ of $H^2(X(P,D),\QQ)$. Moreover, we observe that -- for a suitable choice of log structure on $X_0(P,D)$ -- we expect smoothings to support a large automorphism group, inherited from the automorphism group of $P$. Such a (rank $1$) Calabi--Yau threefold was predicted by van Enckevort and van Straten \cite{ES03}. The prediction is via the construction of the Calabi--Yau differential operator
\begin{align}
\label{eq:operator}
	D :=~&\theta^4 - t(73\theta^4 + 98\theta^3 + 77\theta^2 + 28\theta + 4) + t^2(520\theta^4 - 1040\theta^3 - 2904\theta^2 - 2048\theta - 480) \nonumber \\ &+ 2^6t^3(65\theta^4 + 390\theta^3 + 417\theta^2 + 180\theta + 28)- 2^9t^4(73\theta^4 + 194\theta^3 + 221\theta^2 + 124\theta + 28) \\ &+ 2^{15}t^5(\theta + 1)^4 \nonumber
\end{align}

which has series solution,
\[
\sum_{n = 0}^\infty{\Big\{\sum_{k=0}^n{\binom{n}{k}}\Big\}^2t^n}.
\]
We note that $D$ is operator $100$ in the tables of Almkvist--van Enckevort--van Straten~\cite{AvEvSZ05}. This series solution is the \emph{Hadamard square} of a lower order operator; that is the coefficents $A_n$ of the series solution are themselves squares. Mirror symmetry considerations predict that the Hadamard product structure of the series solution of Picard--Fuchs operators of a Calabi--Yau threefold is compatible with the product structure of the polytopes $P$ associated to toric degenerations of this threefold: by way of a more straightforward example, the series solutions of the differential operator associated to a $(3,3)$ complete intersection in $\PP^5$ is equal to
\[
\sum_{n = 0}^\infty{\Big\{\frac{(3n)!}{n!^3}\Big\}^2z^n}.
\]

For more details on this connection, we refer to Galkin~\cite{G15}. To summarise, given a polygon $P_k$ for $k \in \{3,\ldots, 9\}$ or $8'$, we may form a Laurent polynomial $f_k$ by putting binomial coefficients along the edges. After making a judicious choice of constant term, the period sequences
\[
\pi_f(t) = \int_{\Gamma}{\frac{\Omega}{1-tf}},
\]
where $\Gamma$ is a compact torus in ${\Cstar}^2$ and $\Omega$ is the usual logarithmic volume form, satisfy Picard--Fuchs equations obtained in \cite{AZ06} in some cases. We identify these equations following the notation used in \cite{AZ06}.
\begin{enumerate}
	\item $\pi_{f_9}$ satisfies equation (B).
	\item $\pi_{f_8}$ and $\pi_{f_{8'}}$ satisfy equation (A).
	\item $\pi_{f_7}$ satisfies equation (b).
	\item $\pi_{f_6}$ satisfies either equation (a) or (c) (the equation (a) corresponds to the Minkowski decomposition into three line segments).
	\item $\pi_{f_5}$ is somewhat more mysterious. It satisfies an equation with $t$-degree $4$ when the constant term is set to $1$.
	\item $\pi_{f_4}$ satisfies equation (n) in \cite{AZ06}. As noted in \cite{AZ06} Hadamard products with (n) correspond to a substitution $t \mapsto t^{1/2}$.
\end{enumerate}

It follows immediately that the product of $f_{k_1}$ with $f_{k_2}$ (regarded as functions on different complex tori) satsifies the differential equation given by the Hadamard product of the operators for $f_{k_1}$ and $f_{k_2}$. Hence we predict that our methods recover a number of the examples listed by van Enckevort--van Straten in \cite{ES03}.

In particular, the above considerations suggest that the pair $(P,D)$ determines the smoothing of $X_{P}$ to the join of a pair of elliptic curves in $\PP^1\times \PP^1\times \PP^1$. In particular, we the above considerations provide evidence for the following conjecture concerning Calabi--Yau threefolds with $\chi = -72$.

\begin{conjecture}
	\label{thm:exists}
	The differential operator $D$ is associated to the invariant part of the rank $3$ Calabi--Yau threefold $X$ (with $\chi = -72$) obtained from the pair $(P,D)$ -- and suitable log structure -- with respect to a $\ZZ_2$ subgroup of $\Aut(X)$. This $\ZZ_2$ action is free and the quotient of $X$ by this action is the rank $1$ Calabi--Yau threefold with $h^{1,2} = 19$ obtained in \cite[\S$2.5$]{KS19}.
\end{conjecture}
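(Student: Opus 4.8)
The plan is to assemble the statement from four ingredients: the construction of $X$, the identification of the involution, the analysis of the free quotient, and the identification of the Picard--Fuchs operator. Throughout, $P = P_{6,6}$ and $D$ is the standard decomposition sending every hexagonal face to its Minkowski decomposition into three line segments, so that $n_1 = n_2 = 0$ in the notation of Proposition~\ref{pro:eg_invariants}. By Proposition~\ref{pro:regular_examples} the pair $(P,D)$ is regular, so Theorem~\ref{thm:main_cons} yields a polarised tropical manifold $(B,\P,\varphi)$ and a Gross--Siebert smoothing, and by Proposition~\ref{pro:eg_invariants} the general fibre $X$ is a simply connected Calabi--Yau threefold with $b_2(X) = 3$ and $\chi(X) = -72$, whence $h^{1,2}(X) = 39$. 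The candidate $\ZZ_2 \subset \Aut(X)$ is generated by the involution $\iota$ induced by the order-two element of $\Aut(P) = D_{12}\wr C_2$ that exchanges the two factors of $P_6 \times P_6$. Since $D$ is symmetric under this swap, the swap acts on the triple $(B,\P,\varphi)$ and, for a $\iota$-symmetric choice of log structure, on $X_0(P,D)$ and on its smoothing $X$.

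I would next show that $\iota$ acts freely by tracing its action through the compactified torus fibration $f\colon X \to B$ of \S\ref{sec:topology}: the swap fixes the diagonal of $B$, and using the explicit local fibre models of Remark~\ref{rem:topology_of_fibres} one checks that no point of $X$ sits over a fixed point with a $\iota$-fixed torus coordinate. Granting freeness, $Y := X/\langle\iota\rangle$ is a smooth Calabi--Yau threefold with $\chi(Y) = \tfrac{1}{2}\chi(X) = -36$, and its Hodge numbers are the $\iota$-invariants of those of $X$. It then remains to compute the action of $\iota$ on $H^{1,1}(X)$. Using the presentation of $\Gamma(P,D)$ by edge variables exactly as in Example~\ref{eg:ci33}, where $\iota$ exchanges the two families $\{x_i\}$ and $\{x^i\}$, I expect the invariant subspace of $H^{1,1}(X)$ to be one-dimensional, so $h^{1,1}(Y) = 1$; the relation $-36 = \chi(Y) = 2\bigl(h^{1,1}(Y) - h^{1,2}(Y)\bigr)$ then forces $h^{1,2}(Y) = 19$, as claimed.

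The remaining, and hardest, point is that the operator $D$ of \eqref{eq:operator} governs the variation of Hodge structure on the $\iota$-invariant part of $H^3$. As explained before the conjecture, the Laurent polynomial attached to $P_6 \times P_6$ is the external product of $f_6$ with itself, whose period is the Hadamard square of the period of $f_6$; this Hadamard square is annihilated precisely by $D$, and the order $4$ of $D$ matches the dimension of the horizontal subspace of $H^3$ carrying the one-parameter variation of a rank-one threefold. What must genuinely be supplied is that this combinatorial period computes the periods of the holomorphic volume form of the Gross--Siebert family along the canonical coordinate, and that $\iota$ acts as $+1$ on the corresponding four-dimensional subspace of $H^3(X)$. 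This is exactly the instance of mirror symmetry that is not yet established for these degenerations, and I expect it to be the main obstacle: a rigorous argument would presumably proceed either through the theta-function periods of the intrinsic mirror, or via a direct comparison of $X$ with a torically realised mirror family. This is why the statement is posed as a conjecture rather than a theorem.

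Finally, I would identify $Y$ with the threefold of \cite[\S$2.5$]{KS19}. Both are rank-one Calabi--Yau threefolds with $(h^{1,1},h^{1,2}) = (1,19)$ arising from joins of elliptic curves in $\PP^1\times\PP^1\times\PP^1$, the latter being the ``simultaneous Jerry'' smoothing in the language of \S\ref{sec:examples}. The match would be made by identifying the underlying join geometry and checking that the Picard--Fuchs operator of the Knapp--Sharpe family coincides with $D$; agreement of $(h^{1,1},h^{1,2})$ together with the shared operator $D$ then pins down the topological type, completing the identification.
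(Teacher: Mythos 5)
The statement you are addressing is posed as a \emph{conjecture} in the paper, and the paper contains no proof of it: the only support offered is the heuristic of \S\ref{sec:Hadamard}, namely that the period of the Laurent polynomial attached to $P_{6,6}$ (with every hexagonal face decomposed into line segments) is the Hadamard square of the hexagon period, is annihilated by the operator \eqref{eq:operator} (number $100$ in the AESZ tables), and that van Enckevort--van Straten attach this operator to a rank one threefold with $h^{1,2}=19$. Your programme therefore goes well beyond anything the paper attempts, and you correctly isolate the genuinely open ingredient: that the combinatorial period computes periods of the holomorphic volume form of the Gross--Siebert family. There is no ``official'' proof to compare against, so I will instead assess your plan on its own terms.

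Two steps are concretely wrong as written. First, the involution induced by swapping the factors of $P_6\times P_6$ is not free on $X$ in the form you describe: the swap fixes a nonempty one-dimensional locus of $B=\partial P^\circ$ (points $\tfrac12(a,a)$ with $a$ in an edge of $P_6^\circ$ lie on the facet dual to a vertex $(v,v)$ of $P$), and over a fixed base point the induced action on the fibre $T_b^\star B/\breve{\Lambda}$ is linear, hence fixes the zero-section. Your check that ``no point of $X$ sits over a fixed point with a $\iota$-fixed torus coordinate'' fails there; freeness can only be arranged by composing with a fibrewise translation, which is exactly where the phrase ``suitable log structure'' has to do work. Second, and more seriously, the factor swap cannot be the right $\ZZ_2$: by Theorem~\ref{thm:rank} and its proof, $H^2(X,\QQ)\cong\QQ\oplus\Gamma(P,D)/M_\QQ$, where the first summand (from $H^2(X'_0,\QQ)$) contains the polarisation class and is necessarily fixed, while $\Gamma(P,D)/M_\QQ\cong\QQ\oplus\QQ$ with one summand for each family of hexagonal faces, exchanged by the swap. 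The swap-invariant part of $H^{1,1}(X)$ is therefore $2$-dimensional, so the quotient would have $(b_2,h^{1,2})=(2,20)$ rather than $(1,19)$; and since a fibrewise translation acts trivially on $H^2$, composing with one cannot repair this. A candidate consistent with $(1,19)$ is instead the involution induced by $-\mathrm{id}_N\in\Aut(P)$ (antipodal on each hexagon), which acts by $-1$ on both summands of $\Gamma(P,D)/M_\QQ$ and corresponds, on the join-of-elliptic-curves side, to translation by $2$-torsion --- the standard source of free involutions in the Knapp--Sharpe and Gross--Popescu circle of examples. Your Hodge-number bookkeeping ($\chi(Y)=-36$, so $h^{1,1}(Y)=1$ forces $h^{1,2}(Y)=19$) and your framing of the period identification as the essential open problem are both correct.
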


\appendix
\section{Tables of Calabi--Yau threefolds}
\label{sec:tables}

In this appendix we describe collections of pairs $(P,D)$, where $P = P_{k_1,k_2}$ and $k_1$, $k_2$ range over $\{3,4,5,6,7,8,9\}$. Note that we suppress the value $8'$, since we do not expect it to provide Calabi--Yau threefolds we cannot obtain from polytopes of the form $P_{k_1,8}$.

Following \S\ref{sec:Hadamard} we make a number of predictions on the Picard--Fuchs operators for mirror families, based on the tables given in \cite{AvEvSZ05,AESZ:database}. Note that in some entries we specify operators using the names given in \cite{AZ06}.

The value $\Vol(P^\circ)$ is recorded as a proxy for $H^3$, where $H$ is a hyperplane section. In rank one cases $\Vol(P^\circ)$ is the cube of a generator of $H^2(X,\QQ)$ if this value is cube-free. If $\Vol(P^\circ)$ is not cube-free we check the various possible values of $H^3$ in lists of known Calabi--Yau threefolds. The columns titled `configuration' record how different Minkowski decompositions are chosen, while the columns titled 'orbits' contain the number of orbits for each specified configuration. We describe how to read the column `configuration' in each case.
\begin{enumerate}
	\item In Table~\ref{tbl:P6k} `configuration' records the number $n$ of hexagonal faces decomposed into a pair of triangles. Hence $(k-n)$ faces are decomposed into a triple of line segments.
	\item In Table~\ref{tbl:P66} `configuration' records pairs $(n_1,n_2)$. We assume $n_1$ faces of the form $\{\star\} \times P_6$ are decomposed into a pair of triangles, and $n_2$ faces of the form $P_6 \times \{\star\}$. This case is discussed in greater detail in \S\ref{sec:examples}.
	\item In Table~\ref{tbl:P65} `configuration' records pairs $(n_1,n_2)$ together with values $m \in \{0,1,2,3\}$. The edge of $P^\circ_{5,6}$ dual to two of the $5$ hexagonal facets has length two; hence we must choose one of three possible Minkowski decompositions of $2 \times P_6$ for each of these two faces. These decompositions are stored as $n_1$ and $n_2$ respectively. The value $m$ stores the number of the remaining $3$ hexagonal faces which are decomposed into a pair of triangles.
	\item In Table~\ref{tbl:P64} `configuration' records tuples $(n_1,n_2,n_3,n_4)$. All four hexagonal faces of $P_{4,6}$ are dual to edges of length $2$. Hence we must choose one of $3$ Minkowski decompositions of $2 \times P_6$ for each such hexagonal face. These are stored in $n_i$ for each $i$, where the vertices of $P_4$ are labelled in a clockwise direction.
\end{enumerate}

Entries $n_i$, $i \in \{1,2,3,4\}$ in Table~\ref{tbl:P65} and Table~\ref{tbl:P64} label each hexagonal face dual to a length two line segment with a value in $\{0,1,2\}$. The value $0$ corresponds to the Minkowski decomposition of $2 \times P_6$ into six line segments; the value $1$ corresponds to the decomposition into three line segments and two triangles; and the value $2$ corresponds to the decomposition into four triangles.

\begin{rem}
	We conclude with a remark on the completeness of the tables. The table rows correspond to collections of orbits. The presence of a row means we found some representative $D$ in at least one such orbit such that $(P,D)$ is regular. Moreover, the absence of a configuration in a table means there is at least one pair $(P,D)$ of the given configuration for which regularity is not satisfied. We have not checked every orbit in every class, so it remains possible that our tables are incomplete.
\end{rem}

\begin{table}[htb]
	\[\def\arraystretch{1.5}
	\begin{array}{ccccccc}
	(k_1,k_2) & \chi & b_2 & \Vol(P^\circ) & \text{AESZ} & \#\text{Orbits} & \text{Configuration} \\\hline
	
	(6,7) & -90 & 2 & 30 & 102 & 1 & (0) \\
	(6,7) & -86 & 1 & 30 & & 6 & (2) \\
	(6,7) & -84 & 1 & 30 & & 6 & (3) \\
	(6,7) & -80 & 3 & 30 & 113 & 1 & (5) \\
	(6,8) & -120 & 2 & 24 & 45 & 1 & (0) \\
	(6,8) & -116 & 1 & 24 & 29 & 2 & (2) \\
	(6,8) & -112 & 3 & 24 & 58 & 1 & (4) \\
	(6,9) & -162 & 2 & 18 & 15 & 1 & (0) \\
	(6,9) & -156 & 3 & 18 & 70 & 1 & (3) \\
	\end{array}
	\]
	\caption{Calabi--Yau threefolds from regular pairs $(P_{6,k},D)$, $k \in \{8,9\}$}
	\label{tbl:P6k}
\end{table}

\begin{table}
	\[\def\arraystretch{1.5}
	\begin{array}{ccccccc}
	(k_1,k_2) & \chi & b_2 & \Vol(P^\circ) & \text{AESZ} & \#\text{Orbits} & \text{Configuration} \\\hline
	(6,6) & -64 & 1 & 36 & & 6 & (2,2) \\
	(6,6) & -62 & 1 & 36 & & 6 & (2,3) \\
	(6,6) & -60 & 1 & 36 & & 9 & (2,4) \\
	(6,6) & -60 & 1 & 36 & & 6 & (3,3) \\
	(6,6) & -58 & 1 & 36 & & 9 & (3,4) \\
	(6,6) & -56 & 1 & 36 & & 6 & (4,4) \\	
	(6,6) & -68 & 2 & 36 & & 3 & (0,2) \\
	(6,6) & -66 & 2 & 36 & & 3 & (0,3) \\
	(6,6) & -64 & 2 & 36 & & 3 & (0,4) \\
	(6,6) & -72 & 3 & 36 & 100 & 1 & (0,0) \\
	(6,6) & -56 & 3 & 36 & & 3 & (2,6) \\
	(6,6) & -54 & 3 & 36 & & 3 & (3,6) \\
	(6,6) & -52 & 3 & 36 & & 3 & (4,6) \\
	(6,6) & -60 & 4 & 36 & 104 & 1 & (0,6) \\
	(6,6) & -48 & 5 & 36 & 103 & 1 & (6,6) \\
	\end{array}
	\]
	\caption{Calabi--Yau threefolds from regular pairs $(P_{6,6},D)$}
	\label{tbl:P66}
\end{table}

\begin{table}
	\[\def\arraystretch{1.5}
	\begin{array}{ccccccc}
	(k_1,k_2) & \chi & b_2 & \Vol(P^\circ) & \#\text{Orbits} & \text{Configuration} \\\hline	
	(6,5) & -90 & 3 & 42 & 1 & (0,0),0 \\
	(6,5) & -86 & 2 & 42 & 2 & (0,0),2 \\
	(6,5) & -84 & 2 & 42 & 1 & (0,0),3 \\
	(6,5) & -86 & 2 & 42 & 3 & (0,1),1 \\
	(6,5) & -84 & 2 & 42 & 3 & (0,1),2 \\
	(6,5) & -82 & 2 & 42 & 1 & (0,1),3 \\
	(6,5) & -86 & 2 & 42 & 1 & (1,1),0 \\
	(6,5) & -84 & 2 & 42 & 2 & (1,1),1 \\
	(6,5) & -82 & 2 & 42 & 2 & (1,1),2 \\
	(6,5) & -80 & 2 & 42 & 1 & (1,1),3 \\
	(6,5) & -86 & 2 & 42 & 1 & (0,2),0 \\
	(6,5) & -84 & 2 & 42 & 3 & (0,2),1 \\
	(6,5) & -82 & 2 & 42 & 3 & (0,2),2 \\
	(6,5) & -80 & 2 & 42 & 1 & (0,2),3 \\
	(6,5) & -84 & 2 & 42 & 1 & (1,2),0 \\
	(6,5) & -82 & 2 & 42 & 3 & (1,2),1 \\
	(6,5) & -80 & 2 & 42 & 3 & (1,2),2 \\
	(6,5) & -78 & 2 & 42 & 1 & (1,2),3 \\
	(6,5) & -82 & 2 & 42 & 1 & (2,2),0 \\
	(6,5) & -80 & 2 & 42 & 2 & (2,2),1 \\
	(6,5) & -76 & 4 & 42 & 1 & (2,2),3 \\
	\end{array}
	\]
	\caption{Calabi--Yau threefolds from regular pairs $(P_{5,6},D)$}
	\label{tbl:P65}
\end{table}

\begin{table}
	\[\def\arraystretch{1.5}
	\begin{array}{ccccccc}
	(k_1,k_2) & \chi & b_2 & \Vol(P^\circ) & \text{Operator} & \#\text{Orbits} & \text{Configuration} \\\hline
	
	(6,4) & -72 & 2 & 48 & \text{(n)}\star \text{(a)} & 1 & (0,0,0,0) \\
	(6,4) & -68 & 1 & 48 & & 2 & (0,0,1,1) \\
	(6,4) & -66 & 1 & 48 & & 1 & (0,1,1,1) \\
	(6,4) & -64 & 1 & 48 & & 1 & (1,1,1,1) \\
	(6,4) & -68 & 1 & 48 & & 1 & (0,0,0,2) \\
	(6,4) & -64 & 1 & 48 & & 2 & (0,0,2,2) \\
	(6,4) & -60 & 1 & 48 & & 1 & (0,2,2,2) \\
	(6,4) & -56 & 3 & 48 & \text{(n)}\star \text{(c)} & 1 & (2,2,2,2) \\
	(6,4) & -66 & 1 & 48 & & 2 & (0,0,1,2) \\
	(6,4) & -64 & 1 & 48 & & 2 & (0,1,1,2) \\
	(6,4) & -62 & 1 & 48 & & 1 & (1,1,1,2) \\	
	(6,4) & -62 & 1 & 48 & & 2 & (0,1,2,2) \\
	(6,4) & -60 & 1 & 48 & & 2 & (1,1,2,2) \\
	\end{array}
	\]
	\caption{Calabi--Yau threefolds from regular pairs $(P_{4,6},D)$}
	\label{tbl:P64}
\end{table}

\begin{table}
	\[\def\arraystretch{1.5}
	\begin{array}{cccccc}
	(k_1,k_2) & \chi & b_2 & \Vol(P^\circ) & \text{Operator} & \text{Note} \\\hline
	(4,4) & -64 & 1 & 64 & \text{(n)}\star \text{(n)} &  \\
	(4,5) & -60 & 2 & 56 & & \\
	(4,7) & -100 & 1 & 40 & \text{(b)}\star \text{(n)} & \\
	(4,8) & -144 & 1 & 2^3\cdot 4 &\text{(A)}\star \text{(n)}  & X_{4,4} \subset \PP(1^4,2^2) \\
	(4,9) & -204 & 1 & 2^3\cdot 3 & \text{(B)}\star \text{(n)}  & X_6 \subset \PP(1^4,2) \\
	(5,5) & -56 & 3 & 49 & & \\
	(5,7) & -90 & 2 & 35 & & \\
	(5,8) & -128 & 2 & 28 & & \\
	(5,9) & -180 & 2 & 21 & & \\
	(7,7) & -100 & 1 & 25 & 101 & \Gr(2,5) \cap \Gr(2,5) \\
	(7,8) & -120 & 1 & 20 & 25 & X(1,2,2) \subset \Gr(2,5) \\
	(7,9) & -150 & 1 & 15 & 24 & X(1,1,3) \subset \Gr(2,5) \\
	(8,8) & -128 & 1 & 16 & 3 & X_{2,2,2,2} \subset \PP^7 \\
	(8,9) & -144 & 1 & 12 & 5 & X_{2,2,3} \subset \PP^6 \\
	(9,9) & -144 & 1 & 9 & 4 & X_{3,3} \subset \PP^5 \\
	\end{array}
	\]
	\caption{Calabi--Yau threefolds from regular pairs $(P_{k_1,k_2},D)$, $k_1,k_2 \neq 6$}
	\label{tbl:Psingle}
\end{table}

\clearpage
\bibliographystyle{plain}
\bibliography{bibliography}
\end{document}